\def\bfz{{\mathbf z}}
\newcommand{\mmod}[1]{\,\,(\text{\rm mod}\,\, #1)}
\def\bfr{{\mathcal{R}}}
\def\bfy{{\boldsymbol y}}
\def\bfx{{\boldsymbol x}}
\def\bfz{{\boldsymbol z}}
\newtheorem{thm}{Theorem}[section]
\newtheorem{cor}{Corollary}[section]
\newtheorem{lem}{Lemma}[section]
\newtheorem{conj}{Conjecture}[section]
\newtheorem{prop}{Proposition}[section]
\numberwithin{equation}{section} \numberwithin{thm}{section}
\numberwithin{lem}{section} \numberwithin{problem}{section}
\numberwithin{cor}{section}
\begin{document}
\title{On the Erd\H{o}s-Tur\'an Conjecture and the growth of $B_{2}[g]$ sequences}
\author[Javier Pliego]{Javier Pliego}
\address{Universit\`a di Genova, Dipartimento di Matematica, Via Dodecaneso 35, 16146 Genova, Italy}
\email{javierpg@kth.se, pliegogarcia@dima.unige.it}
\subjclass[2010]{Primary 11B13, 11B34; Secondary 11B83, 11K31}
\keywords{Asymptotic basis, Sidon sets, $B_{2}[g]$ sequences, probabilistic method}

\begin{abstract} When $g\in\mathbb{N}$ we say that $A\subset\mathbb{N}$ is a $B_{2}[g]$ sequence if every $m\in\mathbb{N}$ has at most $g$ distinct representations of the shape $m=b_{1}+b_{2}$ with $b_{1}\leq b_{2}$ and $b_{1},b_{2}\in A$. We show for every $0<\varepsilon<1$ that whenever $g>\frac{1}{\varepsilon}$ then there is a $B_{2}[g]$ sequence $A$ having the property that every sufficiently large $n\in\mathbb{N}$ can be written as $$n=a_{1}+a_{2}+a_{3},\ \ \ \ \ \ \ \ \ a_{3}\leq n^{\varepsilon}\ \ \ \ \ \ \ \ \ a_{i}\in A,$$ and satisfying for large $x$ the estimate $$\lvert A\cap [1,x]\rvert\gg x^{g/(2g+1)}.$$ The above lower bound improves upon earlier results of Cilleruelo and of Erd\H{o}s and Renyi.
\end{abstract}
\maketitle

\section{Introduction}
The problem of representing every sufficiently large integer as a sum of elements of subsets satisfying additional restraints has a long history in the additive theory of numbers, the discussion in the present memoir being confined to the instances in which such subsets have the property that every number has few representations in a prescribed manner involving those. We define an asymptotic basis of order $h$ as a sequence $A\subset\mathbb{N}$ for which every sufficiently large integer $n$ may be written in the shape
$$n=a_{1}+\ldots+a_{h},\ \ \ \ \ \ a_{i}\in A.$$ We also introduce for a sequence $A\subset \mathbb{N}$ and an integer $x\in\mathbb{N}$ the function
$$r_{A}(x)=\Big\{\{a_{1},a_{2}\}:\ \ x=a_{1}+a_{2},\ \ \ \ a_{1},a_{2}\in A\Big\}.$$Equipped with such a utensil we denote by $B_{2}[g]$ to the sequences $A\subset\mathbb{N}$ having the property that $r_{A}(m)\leq g$ for every $m\in\mathbb{N}.$ We shall also refer to $B_{2}[1]$ as Sidon sequences, and focus our attention herein on the investigations concerning the existence of asymptotic bases of order $h$ that are $B_{2}[g]$ sequences simultaneously for suitable choices of $h$ and $g$. 

One may deduce by a simple argument that there are no Sidon asymptotic bases of order two (see \cite{Dirac}). The argument leading to the preceding conclusion is no longer applicable for the problem in which the Sidon property is replaced for arbitrarily large $g\geq 2$ by that of being $B_{2}[g]$, and proving that the aforementioned conditions cannot occur simultaneously  still remains an open question.
\begin{conj}\label{con1}(Erd\H{o}s-Tur\'an conjecture \cite{Erd})
There is no sequence $A\subset\mathbb{N}$ and no fixed $g\geq 2$ with the property that every sufficiently large integer $n$ satisfies
$$1\leq r_{A}(n)\leq g.$$ 
\end{conj}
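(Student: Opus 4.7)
The statement is the celebrated Erd\H{o}s--Tur\'an conjecture, which remains open more than seventy years after its formulation; any proposal must therefore be regarded as speculative and is presented in the spirit of outlining the natural line of attack rather than an actual road to proof.

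The plan is to imitate the argument that settles the case $g=1$ via moment estimates, and to push it quantitatively. Suppose for contradiction that $A\subset\mathbb{N}$ is a $B_{2}[g]$ sequence with $1\leq r_{A}(n)\leq g$ for all $n\geq n_{0}$, and write $A(x)=|A\cap[1,x]|$. Counting unordered pairs from $A\cap[1,N/2]$ yields $\sum_{n\leq N}r_{A}(n)\geq\binom{A(N/2)}{2}$, which combined with $r_{A}(n)\leq g$ forces $A(N)\ll\sqrt{gN}$. Conversely, $r_{A}(n)\geq 1$ for $n_{0}\leq n\leq N$ together with $\sum_{n\leq N}r_{A}(n)\leq\tfrac{1}{2}A(N)^{2}+A(N)$ forces $A(N)\gg\sqrt{N}$, so one is pinned to the critical density regime.

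The second step is Fourier-analytic. With $f(\theta)=\sum_{a\in A\cap[1,N]}e(a\theta)$ one has
$$\int_{0}^{1}|f(\theta)|^{2}\,d\theta=A(N),\qquad \int_{0}^{1}|f(\theta)|^{4}\,d\theta=\sum_{n}s_{A,N}(n)^{2},$$
where $s_{A,N}(n)$ counts ordered representations $n=a_{1}+a_{2}$ with $a_{1},a_{2}\in A\cap[1,N]$; the two-sided constraint on $r_{A}(n)$ pins both moments to windows of comparable order. The aim would then be to extract a structural consequence: under the hypothesised behaviour the convolution $\mathbf{1}_{A}*\mathbf{1}_{A}$ is essentially constant on $[n_{0},N]$, and one would attempt to compare higher moments of $f$ against the entropy of such a near-flat convolution profile so as to derive a contradiction.

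The decisive and, in my view, insurmountable step is precisely the conversion of the narrow-window constraint on $r_{A}(n)$ into a usable structural statement about $A$. The two available moments give complementary bounds that are perfectly consistent with the hypothesised behaviour, so a genuine advance would require a higher-moment, Tauberian or entropy input that no existing method supplies. It is this qualitative gap that has resisted all attacks to date, including the generating-function analysis suggested by Erd\H{o}s, the probabilistic constructions of Erd\H{o}s--R\'enyi, and the Fourier methods underpinning the results cited in the present paper. For this reason the paper itself, in common with its predecessors, establishes only a quantitative relaxation of the conjecture rather than the conjecture as stated.
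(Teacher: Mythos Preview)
Your assessment is correct: the statement is a \emph{conjecture}, not a theorem, and the paper offers no proof of it. The paper quotes the Erd\H{o}s--Tur\'an conjecture purely as motivation, then devotes its entirety to establishing the relaxation recorded in Theorem~\ref{thm11.1} (a $B_{2}[g]$ sequence that is an asymptotic basis of order $3$ with a small third summand). You have identified this situation accurately, and your sketch of the moment/Fourier heuristic together with the explanation of why it stalls is a fair summary of the state of the art.
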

A significant proportion of the work done concerning such a problem being devoted to shedding some light on how tight the above conjecture is, the end of the present communication is to incorporate to the literature a novel approach to the problem in which we derive the existence of $B_{2}[g]$ sequences satisfying a slightly weaker condition than that of being an asymptotic basis of order $2$. 

\begin{thm}\label{thm11.1}
For each positive integer $g\geq 2$ there exists a $B_{2}[g]$ sequence $A$ having the property that every sufficiently large natural number $n$ can be written as
\begin{equation}\label{refi}n=a_{1}+a_{2}+a_{3},\ \ \ \ \ \ \ a_{3}\ll n^{1/g}(\log n)^{2+1/g},\ \ \ \ \ \ a_{i}\in A.\end{equation} Moreover, the counting function of such a sequence satisfies the lower bound
\begin{equation}\label{infor}\lvert A\cap [1,x]\rvert\gg x^{g/(2g+1)}.\end{equation}
\end{thm}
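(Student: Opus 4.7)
The plan is to prove Theorem~\ref{thm11.1} by the probabilistic method, augmented with a deletion step to enforce the $B_2[g]$ property. Fix $\alpha = (g+1)/(2g+1)$ and sample a random set $A_0 \subset \mathbb{N}$ by including each sufficiently large integer $n$ independently with probability $p_n = c n^{-\alpha}$, where $c > 0$ is a small constant to be chosen later. I would first verify the easy size bound: since $\sum_{n \leq x} p_n \asymp c(1-\alpha)^{-1} x^{g/(2g+1)}$, a Chernoff bound applied dyadically gives $|A_0 \cap [1,x]| \asymp c\, x^{g/(2g+1)}$ almost surely for all large $x$.

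The core of the argument is a counting estimate for restricted triples. Set $Y = Y(n) = n^{1/g}(\log n)^{2+1/g}$ and let $T_n$ denote the number of triples $(a_1,a_2,a_3) \in A_0^3$ with $a_1+a_2+a_3 = n$ and $a_3 \leq Y$. Using $\sum_{a_1+a_2=m} p_{a_1}p_{a_2} \asymp c^2 m^{-1/(2g+1)}$ and $\sum_{a_3 \leq Y} p_{a_3} \asymp c (1-\alpha)^{-1} Y^{g/(2g+1)}$, one computes
\[
\E[T_n] \asymp c^3\,Y^{g/(2g+1)} n^{-1/(2g+1)} \asymp c^3 \frac{2g+1}{g}\, \log n,
\]
where the specific logarithmic exponent $2+1/g$ in $Y(n)$ is precisely what produces a clean $\log n$ here. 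I would then apply a Janson-type inequality to the events "these three specified integers all belong to $A_0$", bounding the dependency sum $\Delta$ by separating contributions from pairs of triples sharing one or two summands. If $\Delta = O(\log n)$ can be established, Janson yields $\Pr(T_n = 0) \leq \exp(-\E[T_n]^2/(2\Delta)) \ll n^{-2}$, and Borel--Cantelli gives $T_n \geq 1$ for every sufficiently large $n$ almost surely.

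Next I would control $B_2[g]$ violations. A direct moment computation bounds the expected number of $m \leq X$ with $r_{A_0}(m) \geq g+1$ by
\[
\ll \sum_{m \leq X} \bigl(\E\, r_{A_0}(m)\bigr)^{g+1} \ll c^{2(g+1)} \sum_{m \leq X} m^{-(g+1)/(2g+1)} \ll c^{2(g+1)} X^{g/(2g+1)}.
\]
Choosing $c$ small enough that $c^{2g+1}$ is much smaller than the implicit constant in Step~1, one obtains that the expected number of violation-producing elements in $[1,X]$ is a negligible fraction of $|A_0 \cap [1,X]|$. Define $A$ by deleting one element from each $(g+1)$-fold representation. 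Then $A$ is $B_2[g]$ by construction, the density bound \eqref{infor} is preserved, and the expected number of triples destroyed by deletion at each $n$ is $o(\log n)$, so at least one triple from the $\asymp \log n$ triples in $T_n$ survives inside $A$ for every large $n$, yielding \eqref{refi}.

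The main obstacle is the dependency bound for Janson's inequality in the second paragraph. Triples sharing two coordinates contribute a manageable $O(\log n)$ to $\Delta$, but triples sharing exactly one coordinate yield a term of size roughly $Y^{g/(2g+1)} \cdot Y^{g/(2g+1)} \cdot n^{-2/(2g+1)} \cdot (\log n)^{O(1)}$, which must be shown to be $O(\log n)$ as well; this is what forces the specific exponent $2+1/g$ on the logarithm in $Y(n)$. A secondary technical difficulty is coupling the deletion of violating elements with the lower bound on $T_n$, since the deletions are not independent of $A_0$; I would handle this by showing that on the high-probability event where both $T_n \gg \log n$ and the local density of violations is $o(1)$ times the density of $A_0$, the surviving count $T_n^{A}$ is still positive.
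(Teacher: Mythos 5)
Your outline reproduces the surface of the paper's strategy (same $\alpha=(g+1)/(2g+1)$, Janson plus Borel--Cantelli for the restricted triples, deletion to force $B_2[g]$), but it breaks at two places that are not secondary technicalities. First, there is an unresolved tension in your single parameter $c$. For Borel--Cantelli you need $\Pr(T_n=0)\ll n^{-2}$, and Janson only gives $\exp(-\Theta(\E T_n))=n^{-\Theta(c^3)}$, so $c$ must be \emph{large}; but your deletion step needs $c$ \emph{small} so that the $c^{2g+4}$-scaling of the bad configurations is dominated by the $c^{3}$-scaling of the good triples. With one constant you cannot have both. The paper decouples these scalings by taking probability $\lambda^{-1}x^{-\alpha}$ with $\lambda$ large \emph{and} enlarging the window of the small summand to $g_{\lambda}(n)=\lambda^{(2g+5)/(1-\alpha)}n^{1/g}(\log n)^{1/(1-\alpha)}$ (note $1/(1-\alpha)=2+1/g$, which is where your logarithmic exponent really comes from): then $\E R_n\asymp\lambda^{2g+2}\log n$ while the expected number of contaminated triples is only $O(\lambda\log n)$, so the deletion destroys a $\lambda^{-O(1)}$ fraction. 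Your fixed window $Y(n)=n^{1/g}(\log n)^{2+1/g}$ has no such tunable factor, so the argument as written fails.

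Second, and more fundamentally, the step you defer to the end --- showing that for \emph{every} large $n$ at least one of the $\asymp\log n$ triples survives the deletion --- is the actual heart of the theorem, and an expectation bound on the number of violations gives no almost-sure statement. You need, for each $n$, an upper-tail bound of the form $\Pr\bigl(\lvert T_n(A)\rvert>C\log n\bigr)\ll n^{-2}$ for the number of triples meeting a deleted element; Markov gives only $O(1/C)$, which is not summable, and the standard polynomial concentration inequalities (Kim--Vu, sunflower/Erd\H{o}s--Tetali) either impose conditions like $\E(\lvert T_n(A)\rvert)^{2g+4}\ll\E(R_n(A))$ or cost extra logarithms, as the paper explains in the introduction. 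The paper's main innovation is a direct estimate of the $m$-th moment of $\lvert T_n(A)\rvert$ with $m=\lceil\log n\rceil$ (Sections 6--9, including a delicate classification of how the $m$ tuples can intersect and conditional-expectation bounds for each configuration), which your proposal does not attempt. Relatedly, your claim that the density bound \eqref{infor} is ``preserved'' by deletion rests on an expected-count comparison and would at best give the old logarithmic loss; the paper instead derives \eqref{infor} deterministically from $R_n(A')\gg\log n$ via $N\log N\ll A'(N)^2A'(g_{\lambda}(N))$, which is precisely how the $x^{o(1)}$/log factors of Erd\H{o}s--R\'enyi and Cilleruelo are eliminated. Without these two ingredients (the $\lambda$-tuned window and the high-moment upper-tail bound, plus the counting argument for \eqref{infor}), the proposal does not yield the stated theorem.
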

We record a more transparent version of the preceding theorem in the following corollary.
\begin{cor}\label{thm1.1}
Let $0<\varepsilon<1$. Then for every integer $g>\frac{1}{\varepsilon}$ there exists a $B_{2}[g]$ sequence $A$ having the property that every sufficiently large integer $n$ can be written as
\begin{equation}\label{base3}n=a_{1}+a_{2}+a_{3},\ \ \ \ \ \ \ \ \ a_{3}\leq n^{\varepsilon}\ \ \ \ \ \ \ \ \ a_{i}\in A.\end{equation}  
\end{cor}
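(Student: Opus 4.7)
The approach is entirely deductive: the corollary is a clean qualitative weakening of Theorem \ref{thm11.1}, so the plan is simply to apply that theorem with the specified $g$ and observe that the mixed polynomial-logarithmic bound on $a_{3}$ is absorbed into the single power $n^{\varepsilon}$ once $n$ is large enough. There is no combinatorial or probabilistic content to add beyond what is already embedded in Theorem \ref{thm11.1}; the only step is a comparison of growth rates.

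More precisely, fix $0 < \varepsilon < 1$ and an integer $g > 1/\varepsilon$, so that $1/g < \varepsilon$. Invoking Theorem \ref{thm11.1} produces a $B_{2}[g]$ sequence $A$ with the property that every sufficiently large $n$ admits a representation
\[
n = a_{1} + a_{2} + a_{3}, \qquad a_{i}\in A, \qquad a_{3} \leq C\, n^{1/g}(\log n)^{2+1/g},
\]
for some constant $C$ depending only on $g$. Setting $\delta = (\varepsilon - 1/g)/2 > 0$, the elementary asymptotic $(\log n)^{2+1/g} = o(n^{\delta})$ gives, for all $n$ large enough,
\[
C\, n^{1/g}(\log n)^{2+1/g} \leq n^{1/g + 2\delta} = n^{\varepsilon}.
\]
This matches the bound demanded in \eqref{base3}, and the $B_{2}[g]$ property is transferred from the theorem without any modification. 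The sequence $A$ furnished by Theorem \ref{thm11.1} is therefore the desired one.

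The principal difficulty of the corollary is thus nonexistent in isolation: all substantive obstructions lie in the proof of Theorem \ref{thm11.1}, where the probabilistic construction must simultaneously control the $B_{2}[g]$ condition and the representation property with the tight exponent $1/g$. The only point worth a remark in the deduction itself is that the constant $C$ and the logarithmic factor do not depend on $\varepsilon$, so they are harmlessly consumed by the strictly positive margin $\varepsilon - 1/g$ once $n$ is sufficiently large.
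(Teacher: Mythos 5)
Your deduction is correct and is exactly how the paper treats this corollary (the paper presents it as a "more transparent version" of Theorem \ref{thm11.1} without further argument): since $g>1/\varepsilon$ gives $1/g<\varepsilon$, the factor $n^{1/g}(\log n)^{2+1/g}$ is eventually dominated by $n^{\varepsilon}$, and the $B_{2}[g]$ property carries over unchanged. Nothing is missing; note only that $\varepsilon<1$ forces $g\geq 2$, so the hypothesis of Theorem \ref{thm11.1} is indeed satisfied.
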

Previous results of this flavour would include that of Cilleruelo \cite{Cille3} delivering the existence of a Sidon asymptotic basis of order $4$ with one of the elements in the associated representation being no larger than $n^{\varepsilon}$. We refer the reader to related work \cite{Bou,Woo} in the classical setting of Linnik's conjecture \cite{Lin} on the representability of natural numbers as sums of three squares, one of which being no larger than $n^{\varepsilon}.$ We shall postpone giving account of the literature underlying Conjecture \ref{con1} and observe first that such a conclusion would in turn follow from the stronger conjectural statement (see Erd\H{o}s and Fuchs \cite{ErdFu}) that for any $g\geq 2$ then every $B_{2}[g]$ sequence $A\subset\mathbb{N}$ has the property that 
\begin{equation}\label{addit}\liminf_{x\to\infty}\frac{\lvert A\cap [1,x]\lvert }{x^{1/2}}=0,\end{equation} since asymptotic bases of order $2$ cannot satisfy (\ref{addit}). The above though was proved when $g=1$ by Erd\H{o}s \cite[\S2 Theorem 8]{Hal} by showing that in fact any Sidon sequence $A$ satisfies
\begin{equation*}\liminf_{x\to\infty}\frac{\lvert A\cap [1,x]\rvert(\log x)^{1/2}}{x^{1/2}}=0.\end{equation*}
The existence of a Sidon sequence $A$ satisfying the weaker bound \begin{equation}\label{ASidon}\lvert A\cap [1,x]\rvert\gg x^{1/2-\varepsilon}\end{equation} for arbitrary $\varepsilon>0$ was speculated by Erd\H{o}s \cite{erd3} and is still well beyond our grasp. We find it informative mentioning nonetheless that Ruzsa \cite{Ruz} established the existence of a Sidon sequence $A$ satisfying
$$\lvert A\cap [1,x]\rvert\gg x^{\sqrt{2}-1+o(1)}$$ (see \cite{Cille4} for an explicit construction of a sequence with the above properties). For the instances $g\geq 2$ we record that Erd\H{o}s and Renyi \cite{Erdos} proved in 1960 for arbitrary $\varepsilon>0$ the existence of a $B_{2}[g]$ sequence $A$ having the property for large $x$ that $$\lvert A\cap [1,x]\rvert\gg x^{g/2(g+1)+o(1)}.$$
The preceding bound was superseded by \begin{equation}\label{bounds}\lvert A\cap [1,x]\rvert\gg x^{g/(2g+1)}(\log x)^{-1/(2g+1)+o(1)}\end{equation} in a later memoir of Cilleruelo \cite{Cille2} with the aid of the alteration method and hirtherto remained best possible. We draw the reader's attention to the last part of the statement of Theorem \ref{thm1.1} for the purpose of noting that the lower bound (\ref{infor}) sharpens (\ref{bounds}), such an observation being recorded in the following corollary.
\begin{cor}\label{cor3}For any $g\geq 2$ there is a $B_{2}[g]$ sequence $A\subset\mathbb{N}$ satisfying for large $x$ the estimate
$$\lvert A\cap [1,x]\rvert\gg x^{g/(2g+1)}.$$
\end{cor}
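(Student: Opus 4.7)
The plan is to deduce the corollary directly from Theorem \ref{thm11.1}. That theorem already produces, for every $g \geq 2$, a $B_{2}[g]$ sequence $A$ satisfying both the representation property (\ref{refi}) and the density bound (\ref{infor}). To obtain Corollary \ref{cor3} it is enough to invoke Theorem \ref{thm11.1} with the given $g$ and retain only the second conclusion, discarding the ternary-representation information.

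There are consequently no intermediate steps and no genuine obstacle within the proof of the corollary itself. The content of the statement is essentially comparative: it records that the sequence constructed in Theorem \ref{thm11.1} automatically matches the exponent $g/(2g+1)$ appearing in Cilleruelo's estimate (\ref{bounds}) while suppressing the parasitic factor $(\log x)^{-1/(2g+1)+o(1)}$. Thus the construction driving Theorem \ref{thm11.1} turns out to be, despite the extra restriction imposed by (\ref{refi}), at least as efficient for the pure density problem as the best previously known probabilistic constructions for $B_{2}[g]$ sequences.

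The real difficulty therefore lies upstream, in the proof of Theorem \ref{thm11.1}. Any direct attempt at the corollary in the classical Erd\H{o}s-Renyi and Cilleruelo framework would proceed by selecting each integer $n \leq x$ with probability roughly $n^{-(g+1)/(2g+1)}$, controlling the number of $B_{2}[g]$-violating configurations by moment estimates, and then carrying out an alteration step to remove the offending elements. Sharpening the logarithmic factor relative to (\ref{bounds}) is precisely the delicate point that the construction underlying Theorem \ref{thm11.1} handles implicitly, and it would constitute the main obstacle in a stand-alone proof of the corollary.
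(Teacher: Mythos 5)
Your proposal is correct and matches the paper's treatment: Corollary \ref{cor3} is recorded there as an immediate consequence of Theorem \ref{thm11.1}, obtained by simply retaining the lower bound (\ref{infor}) and discarding the representation property (\ref{refi}). No further argument is required, and your remarks on where the real work lies are consistent with the paper's discussion.
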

It is indeed a noteworthy feature that our work comprises the first result in the literature that eliminates the undesirable $x^{o(1)}$ factor in the above lower bound. Considerations about the underlying innovations leading to the preceding improvement being deferred, we foreshadow though that the robustness of the concentration inequality that we employ to estimate the upper tail of auxiliary random variables in conjunction with an argument relating the sizes of counting and representation functions precludes one from the necessity of the use of conventional manoeuvres entailing the application of Borel-Cantelli. The absence of such an application permits one to eliminate the logarithmic factor having been required hirthertho for convergence purposes.

We return to Conjecture \ref{con1} and mention that Erd\H{o}s \cite{Erd5}, answering the question posed by Sidon of whether there is an asymptotic basis $A$ of order $2$ for which $r_{A}(n)\ll n^{o(1)}$ further constructed a basis $A$ satisfying $$1\leq r_{A}(n)\ll \log n.$$ A later memoir of Ruzsa \cite{Ru} yields the existence of an asymptotic basis $A$ of order two for which the mean square of the corresponding representation function is bounded, namely $$\sum_{n\leq N}r_{A}(n)^{2}\ll N,$$ the consequence of which being that it is exceptional for a given number $n$ to have a large representation function (see \cite{Ru2}). It has been thought convenient to include a cornucopia of results in the opposite direction, namely the computational work of Grekos et. al. \cite{Gre}, which assures for every asymptotic basis $A$ the lower bound $r_{A}(n)\geq 6$ for infinitely many $n$, the number $6$ having been superseeded by $8$ in \cite{Bor}. The interested reader shall be referred to \cite{Kon} for a result within this circle of ideas.

Back to the setting underlying Theorem \ref{thm1.1}, Erd\H{o}s and Nathanson \cite{erd3} claimed the existence of an asymptotic basis $A$ of order $3$ that is a $B_{2}[g]$ sequence for some $g\in\mathbb{N}$ and asked for the minimum $g$ satisfying such a property. In a later memoir, Cilleruelo \cite{Cille3} demonstrated the validity of the preceding statement for $g=2$, the analogous problem for $g=1$ (see \cite{Erd2,Erd4}) having been solved in a recent preprint of Pilatte \cite{Pil}. We conclude our description of consequences by noting that Theorem \ref{thm1.1} yields when $g=2$ a stronger conclusion than the aforementioned one obtained by Cilleruelo.
\begin{cor}\label{cor3}
There exists a $B_{2}[2]$ sequence $A\subset\mathbb{N}$ having the property that every sufficiently large natural number $n$ can be expressed as
\begin{equation}\label{esste}n=a_{1}+a_{2}+a_{3},\ \ \ \ \ \ \ a_{3}\ll n^{1/2}(\log n)^{5/2},\ \ \ \ \ \ a_{i}\in A.\end{equation} 
\end{cor}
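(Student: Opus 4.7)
The statement is the specialization of Theorem \ref{thm11.1} to the case $g=2$, so my plan is simply to invoke that theorem and verify that the exponents match.

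First I would apply Theorem \ref{thm11.1} with $g=2$. This directly produces a $B_{2}[2]$ sequence $A\subset\mathbb{N}$ with the property that every sufficiently large $n\in\mathbb{N}$ admits a representation
\[
n=a_{1}+a_{2}+a_{3},\qquad a_{i}\in A,
\]
with the small-element bound
\[
a_{3}\ll n^{1/g}(\log n)^{2+1/g}.
\]
Substituting $g=2$ into the exponent of $n$ yields $n^{1/2}$, and the exponent of $\log n$ becomes $2+1/2=5/2$, which is exactly the bound $a_{3}\ll n^{1/2}(\log n)^{5/2}$ asserted in \eqref{esste}.

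There is no additional step: the counting function estimate in Theorem \ref{thm11.1} is not needed here since the corollary only records the representability claim. The only thing to double-check is that $g=2$ is indeed covered by the hypothesis $g\geq 2$ of Theorem \ref{thm11.1}, which it trivially is. Consequently, there is no obstacle; the corollary is just the specialization of Theorem \ref{thm11.1} at the endpoint $g=2$, and its role in the paper is to juxtapose with Cilleruelo's basis of order three from \cite{Cille3} and exhibit the improvement in the size of the smallest summand.
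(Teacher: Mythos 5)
Your proposal is correct and coincides with the paper's treatment: the corollary is stated as an immediate specialization of Theorem \ref{thm11.1} at $g=2$, with $n^{1/g}(\log n)^{2+1/g}$ becoming $n^{1/2}(\log n)^{5/2}$, exactly as you computed. No further argument is needed.
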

An insigthful inspection of the argument in Pilatte \cite{Pil} reveals that the Sidon sequence $A\subset\mathbb{N}$ stemming from their result satisfies $\lvert A\cap[1,X]\rvert =X^{c+o(1)}$ for $c\leq\frac{3-\sqrt{5}}{2}<2/5$, whence a simple counting argument would entail that such a sequence can not satisfy (\ref{esste}) for all sufficiently large integers $n$. The discussion stemming from Theorem \ref{thm11.1} and equation (\ref{ASidon}) naturally leads to the following prediction.
\begin{conj}
For every $\varepsilon>0$ there is a Sidon sequence $A\subset\mathbb{N}$ with the property that every sufficiently large integer $n$ can be written as
\begin{equation*} n=a_{1}+a_{2}+a_{3},\ \ \ \ \ \ \ \ \ \ a_{3}\leq n^{\varepsilon}\ \ \ \ \ \ \ \ \ a_{i}\in A.\end{equation*} 
\end{conj}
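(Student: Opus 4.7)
The natural approach is to specialise the probabilistic machinery underlying Theorem \ref{thm11.1} to the limiting case $g=1$. Concretely, I would retain each $n\in\mathbb{N}$ independently with probability $p_{n}=c\,n^{-1/2}(\log n)^{\alpha}$ for some $\alpha$ to be optimised, producing a random set $\widetilde{A}\subset\mathbb{N}$ of expected counting function $x^{1/2}(\log x)^{\alpha}$ on $[1,x]$, and then apply an alteration step deleting one element from every additive collision $a_{1}+a_{2}=a_{1}'+a_{2}'$ so as to enforce the Sidon property while retaining a positive proportion of the random set.

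For the covering assertion one would fix $\varepsilon>0$ and, for each sufficiently large $n$, examine
\[
X_{n}=\#\big\{(a_{1},a_{2},a_{3})\in\widetilde{A}^{3}:\ a_{1}+a_{2}+a_{3}=n,\ a_{3}\leq n^{\varepsilon}\big\}.
\]
A routine computation using $\int_{0}^{M}a^{-1/2}(M-a)^{-1/2}da=\pi$ delivers $\mathbb{E}[X_{n}]\asymp n^{\varepsilon/2}(\log n)^{3\alpha}$, and an application of a concentration inequality of the strength exploited in Theorem \ref{thm11.1}, together with a union bound over $n$, should ensure that with overwhelming probability every large $n$ admits at least one representation surviving the alteration. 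One would then verify that the alteration destroys only a vanishing fraction of such triples, so that the covering property persists.

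The principal obstacle is that this scheme cannot succeed in so naive a form. With $p_{n}\asymp n^{-1/2}$ the expected number of Sidon violations in $[1,N]$ is of order $N$, far exceeding the expected size $N^{1/2}$ of $\widetilde{A}\cap[1,N]$, so the alteration step essentially annihilates the random set; this is precisely the reason the standard alteration method delivers only the exponent $1/3$ for Sidon sets, Ruzsa's Fourier-theoretic refinement currently stopping at $\sqrt{2}-1$. On the other hand a simple counting argument forces $\lvert A\cap[1,x]\rvert\gg x^{1/(2+\varepsilon)}$ for any sequence satisfying (\ref{base3}), a density well above anything presently accessible. Any proof would therefore implicitly produce Sidon sets of near-maximal density, and a genuine breakthrough on the Erd\H{o}s speculation (\ref{ASidon}) appears to be an indispensable ingredient, quite possibly in combination with explicit algebraic constructions such as Singer difference sets to seed the probabilistic model and to suppress the overabundance of additive collisions.
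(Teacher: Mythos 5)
The statement you were asked about is not a theorem of the paper at all: it is the paper's own concluding \emph{conjecture}, recorded as a prediction motivated by Theorem \ref{thm11.1} and the speculation (\ref{ASidon}), and the paper offers no proof of it — indeed it remarks that even the analogous assertion with a $B_{2}[g_{0}]$ sequence for some fixed $g_{0}\geq 2$ in place of a Sidon sequence is ``far out of reach.'' Your proposal, which candidly concludes that the naive specialisation of the probabilistic method to $g=1$ cannot work and that a genuine breakthrough on dense Sidon sequences would be needed, is therefore the correct assessment rather than a gap on your part.

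Your supporting heuristics are also sound and consistent with the paper. With retention probabilities $p_{n}\asymp n^{-1/2}$ the expected number of additive coincidences $a_{1}+a_{2}=a_{1}'+a_{2}'$ up to $N$ is indeed of order $N$ (up to logarithms), dwarfing the expected size $N^{1/2}$ of the random set, so the alteration step destroys essentially everything; this is why alteration-type constructions stall near exponent $1/3$ and Ruzsa's method near $\sqrt{2}-1$. Your counting bound $\lvert A\cap[1,x]\rvert\gg x^{1/(2+\varepsilon)}$ for any sequence with the covering property (\ref{base3}) is exactly the kind of obstruction the paper itself invokes when it observes that Pilatte's Sidon basis, of density exponent at most $(3-\sqrt{5})/2<2/5$, cannot satisfy (\ref{esste}): for small $\varepsilon$ the forced density exceeds every known Sidon construction, so any proof of the conjecture would en route settle (a strong form of) Erd\H{o}s's speculation (\ref{ASidon}). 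In short, there is nothing to compare your argument against, because no proof exists in the paper; your diagnosis of why the statement resists the methods of Theorem \ref{thm11.1} is accurate.
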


We note in view of Theorem \ref{thm11.1} that any result of the above type with $A$ being instead a $B_{2}[g_{0}]$ sequence for some fixed $g_{0}\geq 2$ would still be interesting (and far out of reach), and record for completeness that Deshoulliers and Plagne \cite{des} had previously constructed a Sidon basis of order 7, the existence of a Sidon basis of order $5$ being delivered in a paper of Kiss \cite{Kis} that was improved by Cilleruelo \cite{Cille3} in the way described after Corollary \ref{thm1.1}.

The proof of Theorem \ref{thm1.1} is based on an application of the probabilistic method introduced by Erd\H{o}s and Renyi \cite{Erdos}, the underlying analysis being deployed in the probability space of sequences for which for $A\subset \mathbb{N}$ then the events $x\in A$ are independent and \begin{equation*}\mathbb{P}(x\in A)\asymp x^{-\alpha}\end{equation*} for some $\alpha>0$. The manoeuvres often utilised to circumvent the dependence of certain random variables by making use of Janson's inequality \cite{Janson1} and which were incorporated by Erd\H{o}s and Tetali \cite{ErTe} are deployed to derive in the setting of Corollary \ref{thm1.1} and for any $$\alpha<\frac{\varepsilon+1}{\varepsilon+2}$$ that
almost every sequence $A$ is an asymptotic basis of order $3$ satisfying (\ref{base3}).

On the other hand, the standard approach entailing the use of the probabilistic method \cite{Erdos} enables one to deduce that almost all sequences are $B_{2}[g]$ whenever $$\alpha>\frac{g+2}{2(g+1)}.$$ The combination of the preceding remarks would enable one to obtain after eliminating a finite number of elements a $B_{2}[g]$ sequence $A$ satisfying (\ref{base3}) and subject to the proviso $g>\frac{2}{\varepsilon}.$ The sharpening of the preceding restraint to both $g>\frac{1}{\varepsilon}$ and \begin{equation*}\alpha>\frac{g+1}{2g+1}\end{equation*} is assessed with the aid of the alteration method in a similar vein as in \cite{Cille3} (see \cite{Cille2} for another application of such a technique) to eliminate from a suitable sequence for which (\ref{base3}) holds the elements involved in the representation of numbers having at least $g+1$ of such representations. However, an astute reappraisal of the argument utilised in \cite{Cille3} to treat the upper tail of the auxiliary random variable $\lvert T_{n}(A)\rvert$ that encodes such elements having its genesis on the Sunflower lemma introduced by Erd\H{o}s and Tetali \cite{ErTe} would have the deficiency that it would essentially require in this setting the assumption that \begin{equation}\label{consr} \mathbb{E}(\lvert T_{n}(A)\rvert)^{2g+4}\ll (\log n)^{2g+4}\ll\mathbb{E}(R_{n}(A)),\end{equation} wherein $R_{n}(A)$ denotes the associated random variable which counts the number of solutions of (\ref{refi}), the concentration inequalities of Vu \cite{Vu2} or Kim and Vu \cite{Kim} utilised on numerous occasions in related problems (see \cite{Vu}) entailing similar obstructions. The preceding restrain would impair the ensuing conclusions and lead to the weaker bound \begin{equation*}\label{bound}\lvert A\cap [1,x]\rvert\gg x^{g/(2g+1)}(\log x)^{-1-2/(2g+1)}\end{equation*}and to (\ref{refi}) but with the restriction $$x_{3}\ll n^{1/g}(\log n)^{4g+\delta}$$ for some constant $\delta>0$. Other concentration inequalities in the literature seem to be applicable in the present setting but do not deliver satisfactory estimates, the main obstruction being that the conditional expectations of some auxiliary random variables that appear in due course do not exhibit in the present setting a power saving $O(n^{-\beta})$ for some $\beta>0$ as is typically required.

Two new ingredients are required, the first one consisting on a robust enough estimate for the upper tail of $\lvert T_{n}(A)\rvert$ which hinges on the analysis of integral moments of $\lvert T_{n}(A)\rvert$ and circumvents the aforementioned power saving constraints. Such an alleviation permits one to eliminate the exponent in (\ref{consr}) and the logarithmic factor in (\ref{infor}). The main difficulty then has its genesis on the high dependence of some of the random variables involved when expanding the corresponding product, it being ultimately required showing after a careful division of cases that such contributions are in most of the instances negligible. The sequences stemming after such considerations have the additional property that the number of representations of $n$ as in (\ref{refi}) is large enough so as to derive the required lower bound for the corresponding counting function, such a manoeuvre thereby avoiding the use of Borel-Cantelli. 

The exposition is structured as follows. After giving account of required technical lemmata, Sections \ref{sec3} and \ref{sec4} are devoted to deduce that almost all sequences have the property that (\ref{refi}) holds. In Section \ref{sec5} we outline the argument to assure that such sequences satisfy the additional $B_{2}[g]$ condition, the analysis of both the expectation and the upper tail of $\lvert T_{n}(A)\rvert$ being required to derive such a condition and assessed in Sections \ref{sec7}, \ref{sec10}, \ref{sec11} and \ref{sec9}. Section \ref{sec8} includes then the corresponding discussion about the lower bound for the size of the counting function underlying Corollary \ref{cor3} and the proof of Theorem \ref{thm1.1}. We use $\ll$ and $\gg$ to denote Vinogradov's notation, write $f=O(g)$ if $f\ll g$ and $f\asymp g$ whenever $f\ll g$ and $f\gg g$. If the latter symbol is involved to describe a range of summation, the implicit constants will not play a role in the argument. When $\lambda>0$ we also write $\ll_{\lambda}$ and $O_{\lambda}(-)$ to specify that the corresponding implicit constants depend on $\lambda$. We shall use $g=o_{\lambda}(f)$ to indicate that $\lim_{\lambda\rightarrow\infty}f/g=0.$ We write $\bfx\in\mathbb{R}^{k}$ to denote vectors $\bfx=(x_{1},\ldots,x_{k})$.

\emph{Acknowledgements:}
The author was supported by the G\"oran Gustafsson Foundation while being a Postdoc at KTH, would like to thank Javier Cilleruelo for introducing him to the topic of the paper and express his gratitude to Akshat Mudgal for inspiring remarks.

\section{Preliminary lemmata}
We begin by introducing some required infraestructure, it being convenient to present first the following lemma that establishes the existence of the probability space on which to base our manoeuvres. We write $\Omega$ to refer to the set of sequences of the natural numbers.
\begin{lem}\label{proba}
Let $(\theta_{n})_{n}$ be a sequence of real numbers having the property that
$$0\leq \theta_{n}\leq 1\ \ \ \ \ \ \ \ \ \ n\in\mathbb{N}.$$ Then there exists a probability space $(\Omega, \mathcal{A}, \mathbb{P})$ satisfying:

(i) For every $n\in\mathbb{N}$, the event $\mathcal{B}^{(n)}=\{\omega\subset \mathbb{N}: \ \ n\in\omega\}$ is measurable and $\mathbb{P}(\mathcal{B}^{(n)})=\theta_{n}.$

(ii) The events $\mathcal{B}^{(1)},\mathcal{B}^{(2)},\ldots$ are independent.
\end{lem}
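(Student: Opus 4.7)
The plan is to realise $(\Omega,\mathcal{A},\mathbb{P})$ as the countable product probability space associated with the single-coordinate Bernoulli measures with parameters $\theta_{n}$. First I would identify the set $\Omega$ of sequences of natural numbers with $\{0,1\}^{\mathbb{N}}$ via the bijection $\omega\mapsto (\mathbf{1}_{n\in\omega})_{n\in\mathbb{N}}$, under which the event $\mathcal{B}^{(n)}$ corresponds to the cylinder $C_{n}=\{\eta\in\{0,1\}^{\mathbb{N}}:\eta_{n}=1\}$. For each $n$ I would introduce on the two-point set $\{0,1\}$, equipped with its power set $\sigma$-algebra, the probability measure $\mu_{n}$ defined by $\mu_{n}(\{1\})=\theta_{n}$ and $\mu_{n}(\{0\})=1-\theta_{n}$, which is well-defined because $0\leq\theta_{n}\leq 1$.

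Next I would equip $\Omega$ with the product $\sigma$-algebra $\mathcal{A}=\bigotimes_{n\in\mathbb{N}}\mathcal{P}(\{0,1\})$, generated by the cylinder sets of the form $\{\eta:\eta_{n_{1}}=\epsilon_{1},\ldots,\eta_{n_{k}}=\epsilon_{k}\}$ with $k\in\mathbb{N}$, distinct indices $n_{i}\in\mathbb{N}$ and $\epsilon_{i}\in\{0,1\}$. The core step is then to invoke the standard countable-product measure construction (a consequence of Kolmogorov's extension theorem, or equivalently of the Ionescu--Tulcea theorem) which produces a unique probability measure $\mathbb{P}=\bigotimes_{n}\mu_{n}$ on $(\Omega,\mathcal{A})$ whose value on every finite cylinder agrees with the corresponding product
\[
\mathbb{P}\bigl(\{\eta:\eta_{n_{i}}=\epsilon_{i},\ 1\leq i\leq k\}\bigr)=\prod_{i=1}^{k}\mu_{n_{i}}(\{\epsilon_{i}\}).
\]
Consistency of the finite-dimensional marginals, which is what the extension theorem asks for, is immediate because each factor measure is a probability measure and the finite products telescope in the usual fashion.

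With $\mathbb{P}$ in hand, part (i) of the statement is transparent: each $C_{n}$ is a one-coordinate cylinder, hence lies in $\mathcal{A}$, and by definition $\mathbb{P}(C_{n})=\mu_{n}(\{1\})=\theta_{n}$. For (ii), given any finite set of indices $n_{1}<\cdots<n_{k}$, the intersection $C_{n_{1}}\cap\cdots\cap C_{n_{k}}$ is precisely the cylinder prescribing the value $1$ at each coordinate $n_{i}$ and leaving the remaining coordinates unconstrained, so
\[
\mathbb{P}\bigl(C_{n_{1}}\cap\cdots\cap C_{n_{k}}\bigr)=\prod_{i=1}^{k}\theta_{n_{i}}=\prod_{i=1}^{k}\mathbb{P}(C_{n_{i}}),
\]
which is the definition of (mutual) independence for the family $\{\mathcal{B}^{(n)}\}_{n\in\mathbb{N}}$.

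There is no real obstacle here: the whole content is a routine application of the countable-product measure construction, with the only point requiring a modicum of care being the verification that the finite-dimensional marginals are consistent under projections, which follows at once from the fact that each $\mu_{n}$ has total mass one.
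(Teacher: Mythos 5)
Your construction is correct and complete: the identification of $\Omega$ with $\{0,1\}^{\mathbb{N}}$, the product measure $\bigotimes_{n}\mu_{n}$ obtained from the countable-product (Kolmogorov/Ionescu--Tulcea) construction, and the cylinder-set computations for (i) and (ii) are exactly the standard argument. The paper itself offers no proof, merely citing Halberstam--Roth \cite[Theorem 13, \S 3]{Hal}, and your argument is in substance the construction underlying that reference, so there is nothing further to reconcile.
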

\begin{proof}
See Halberstam-Roth \cite[Theorem 13, \S 3]{Hal}.
\end{proof}
The majority of the results obtained herein which shall eventually lead to establishing a particular conclusion in a set of measure $1$ and for a sufficiently large integer shall have its reliance on the Borel-Cantelli lemma.
\begin{lem}[Borel--Cantelli] \label{prop2} Let $\{E_{n}\}_{n=1}^{\infty} $ be a sequence of measurable events having the property that $$\sum_{n=1}^{\infty}\mathbb{P}(E_{n})<\infty.$$ Then with probability $1$ at most a finite number of them occur. Equivalently, one has that
$$\mathbb{P}\Big(\bigcap_{i=1}^{\infty}\bigcup_{j=i}^{\infty}E_{j}\Big)=0.$$
\end{lem}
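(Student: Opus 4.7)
The plan is to proceed along the standard route: rewrite the event of interest as a decreasing intersection of tails, bound each tail using countable subadditivity, and conclude by continuity of measure from above. Concretely, I would first set $F_{i}=\bigcup_{j=i}^{\infty}E_{j}$ and observe that this is a measurable set (as a countable union of measurable events) and that the sequence $\{F_{i}\}_{i\geq 1}$ is monotonically decreasing in $i$, with $\bigcap_{i=1}^{\infty}F_{i}$ coinciding with $\bigcap_{i=1}^{\infty}\bigcup_{j=i}^{\infty}E_{j}$, namely the event that infinitely many of the $E_{j}$ occur.

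Next I would apply countable subadditivity of $\mathbb{P}$ to deduce the inequality $\mathbb{P}(F_{i})\leq \sum_{j=i}^{\infty}\mathbb{P}(E_{j})$. The hypothesis $\sum_{n=1}^{\infty}\mathbb{P}(E_{n})<\infty$ amounts to the convergence of a non-negative series, whence its tails tend to zero and one infers that $\mathbb{P}(F_{i})\to 0$ as $i\to\infty$. Since $\mathbb{P}(F_{1})\leq \sum_{n=1}^{\infty}\mathbb{P}(E_{n})<\infty$, continuity of measure from above applied to the decreasing sequence $\{F_{i}\}_{i\geq 1}$ then yields the identity $\mathbb{P}\bigl(\bigcap_{i=1}^{\infty}F_{i}\bigr)=\lim_{i\to\infty}\mathbb{P}(F_{i})=0$, which is the second form of the claim, the first being merely its complementary reformulation.

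There is no genuine obstacle in this argument; the only ingredient beyond the countable subadditivity of probability measures is the continuity from above, which itself reduces to the standard identity $\mathbb{P}(F_{1})=\mathbb{P}(\bigcap_{i}F_{i})+\sum_{i=1}^{\infty}\mathbb{P}(F_{i}\setminus F_{i+1})$ together with the assumed finiteness of $\mathbb{P}(F_{1})$. Given that the author cites this lemma as a black-box tool to be invoked repeatedly through Borel--Cantelli applications in subsequent sections, a citation to a standard reference such as Halberstam--Roth would equally suffice in lieu of the brief argument just outlined.
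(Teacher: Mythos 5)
Your argument is correct and complete: writing the limsup event as the decreasing intersection of the tail unions $F_{i}=\bigcup_{j\geq i}E_{j}$, bounding $\mathbb{P}(F_{i})$ by the tail of the convergent series via countable subadditivity, and invoking continuity of measure from above is the standard proof of the first Borel--Cantelli lemma. The paper itself offers no argument at all, simply citing Halberstam--Roth, so your proposal supplies exactly the routine proof that the cited reference contains; as you note yourself, either the short argument or the citation would do here, since the lemma is used purely as a black box in the rest of the paper.
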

\begin{proof}
See Halberstam-Roth \cite[Theorem 7, \S 3]{Hal}.
\end{proof}

The upcoming technical lemma shall be utilised on numerous occasions in the memoir, it being profitable presenting it herein and alluding to its conclusion when required henceforth. 

\begin{lem}\label{lem1.1}
	For all $\alpha,\beta>0$ with $1/2<\alpha,\beta<1$ we have
	$$ \sum_{\substack{x,y\geq 1\\  x+y=n}}x^{-\alpha}y^{-\beta}\ll n^{1-\alpha-\beta},\ \ \ \ \ \ \ \ \sum_{\substack{x,y\geq 1\\  y-x=n}}{x^{-\alpha}y^{-\beta}}\ll n^{1-\alpha-\beta}.$$ 
\end{lem}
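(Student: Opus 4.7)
The plan is to prove both bounds by splitting the range of summation at roughly the midpoint so that on each half one of the factors can be bounded by its value at the split point, leaving a one-variable sum of the form $\sum_{k\le M} k^{-\gamma}$ to be estimated by elementary means using the hypothesis $1/2<\alpha,\beta<1$. Note that $\alpha+\beta>1$, so tail sums of the form $\sum_{k>N}k^{-\alpha-\beta}$ converge and are $\ll N^{1-\alpha-\beta}$, while partial sums $\sum_{k\le N}k^{-\gamma}$ with $\gamma<1$ are $\ll N^{1-\gamma}$; these two facts are the only analytic inputs.

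For the first sum, I would write it as $\sum_{x=1}^{n-1} x^{-\alpha}(n-x)^{-\beta}$ and split at $x=n/2$. On the range $x\le n/2$ one has $n-x\asymp n$, hence $(n-x)^{-\beta}\ll n^{-\beta}$, and the remaining sum $\sum_{x\le n/2}x^{-\alpha}\ll n^{1-\alpha}$ (using $\alpha<1$) gives a contribution $\ll n^{1-\alpha-\beta}$. The complementary range $x>n/2$ is handled symmetrically by bounding $x^{-\alpha}\ll n^{-\alpha}$ and summing $(n-x)^{-\beta}$ over $n-x\le n/2$.

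For the second sum, parametrise by $x\ge 1$ and $y=x+n$, giving $\sum_{x\ge 1}x^{-\alpha}(x+n)^{-\beta}$. Split at $x=n$. For $x\le n$ one has $x+n\asymp n$, so $(x+n)^{-\beta}\ll n^{-\beta}$ and $\sum_{x\le n}x^{-\alpha}\ll n^{1-\alpha}$, giving $\ll n^{1-\alpha-\beta}$. For $x>n$ one has $x+n\asymp x$, so the sum is bounded by $\sum_{x>n}x^{-\alpha-\beta}$, which, since $\alpha+\beta>1$, is $\ll n^{1-\alpha-\beta}$ by comparison with an integral.

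There is no real obstacle here; the only point to be careful about is that the two-sided hypothesis $1/2<\alpha,\beta<1$ is used in two distinct places (the upper bound $\alpha,\beta<1$ to evaluate the partial sums $\sum_{k\le N}k^{-\gamma}$, and the lower bound $\alpha,\beta>1/2$, which guarantees $\alpha+\beta>1$ so that the tail $\sum_{k>n}k^{-\alpha-\beta}$ converges to something $\ll n^{1-\alpha-\beta}$). Both parts thus reduce, after the splitting, to one-variable estimates that are standard.
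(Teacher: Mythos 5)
Your proposal is correct and follows essentially the same argument as the paper: split the first sum at $x=n/2$ and the second at $x=n$, bound the large factor trivially on each range, and finish with the standard partial-sum estimate (using $\alpha,\beta<1$) and tail estimate (using $\alpha+\beta>1$).
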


\begin{proof}
We denote by $S_{1}$ to the first sum and note by following a routinary procedure that
\begin{align*}
S_{1}=&\sum_{1 \leq x \leq \frac{n}{2}}x^{-\alpha}(n-x)^{-\beta}+\sum_{\frac{n}{2}< x \leq n}x^{-\alpha}(n-x)^{-\beta},
\end{align*} whence it is apparent whenever $\alpha, \beta<1$ that 
\begin{align*}S_{1}&\ll n^{-\beta}\sum_{1 \leq x \leq \frac{n}{2}}x^{-\alpha}+n^{-\alpha}\sum_{ 1\leq x \leq n/2}x^{-\beta}\ll n^{1-\alpha-\beta}.
\end{align*}
Likewise, by employing an analogous argument it transpires that
\begin{align*}
\sum_{\substack{x,y\geq 1\\  y-x=n}}{x^{-\alpha}y^{-\beta}}=&\sum_{1 \leq x \leq n}x^{-\alpha}(n+x)^{-\beta}+\sum_{x>n}x^{-\alpha}(n+x)^{-\beta}
\\
\ll &n^{-\beta}\sum_{1 \leq x \leq n}x^{-\alpha}+\sum_{ x>n}x^{-\alpha-\beta}\ll n^{1-\alpha-\beta},
\end{align*}
as desired. 
\end{proof}
\section{Expected value of $R_{n}(A)$}\label{sec3}
In order to prepare the ground for the application of the probabilistic method, it seems worth introducing first the following theorem to alleviate further computations.
\begin{thm}\label{thm2.1}
There exists infinitely many cyclic groups $\mathbb{Z}/M\mathbb{Z}$ containing a Sidon set $S\subset \mathbb{Z}/M\mathbb{Z}$ and having the property for every $n\in \mathbb{Z}/M\mathbb{Z}$ that \begin{equation*}n=s_{1}+s_{2}+s_{3},\ \ \ \ \ \ \ s_{i}\in S\end{equation*} for pairwise distinct $s_{1},s_{2},s_{3}\in S$.
\end{thm}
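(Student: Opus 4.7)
The plan is to invoke Singer's classical construction of cyclic $(q^{2}+q+1,q+1,1)$-difference sets, where $q$ ranges over prime powers; writing $M=q^{2}+q+1$ and $S_{q}\subset\mathbb{Z}/M\mathbb{Z}$ for the resulting set, the Sidon property is immediate from the defining parameter $\lambda=1$, since any equality $a+b=c+d$ with $a,b,c,d\in S_{q}$ rearranges to $a-c=d-b$ and uniqueness of difference representations forces $\{a,b\}=\{c,d\}$. Letting $q$ range over all prime powers supplies infinitely many candidate groups, so the whole problem reduces to verifying that the triple sumset $S_{q}+S_{q}+S_{q}$, restricted to pairwise distinct summands, covers $\mathbb{Z}/M\mathbb{Z}$ for infinitely many $q$.

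The natural starting point for the covering claim is the Fourier expansion
$$r_{3}(n)=\#\{(s_{1},s_{2},s_{3})\in S_{q}^{3}:s_{1}+s_{2}+s_{3}\equiv n\!\!\!\pmod{M}\}=\frac{1}{M}\sum_{\chi}\widehat{1_{S_{q}}}(\chi)^{3}\chi(-n),$$
with main term $(q+1)^{3}/M=q+2-1/M$ coming from the trivial character. The difference-set equation translates into the flatness identity $|\widehat{1_{S_{q}}}(\chi)|^{2}=q$ for every nontrivial character $\chi$, so the straightforward $L^{\infty}$ bound on the cubic sum over nontrivial $\chi$ yields an error of order $q^{3/2}$.

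The main obstacle is that this crude bound is \emph{larger} than the main term $q$, and therefore does not preclude $r_{3}(n)=0$ for some $n$. To close the gap I would attempt to extract additional cancellation from $\sum_{\chi\neq\chi_{0}}\widehat{1_{S_{q}}}(\chi)^{3}\chi(-n)$ by exploiting the explicit description of $\widehat{1_{S_{q}}}(\chi)$ as a Gauss-type character sum on $\mathbb{F}_{q^{3}}$ arising from the projective-plane realization $S_{q}\subset PG(2,q)$. A fallback, should the finer cancellation prove elusive, is to modify the construction: either adjoin a small carefully-chosen set of extra residues to $S_{q}$ via an alteration argument that preserves the Sidon condition, or replace $S_{q}$ by a Bose--Chowla Sidon set in $\mathbb{Z}/(q^{2}-1)\mathbb{Z}$ whose extra density leaves room to force $r_{3}(n)>0$ for every $n$.

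Once $r_{3}(n)>0$ has been secured for every $n$, the contribution of ordered triples with a repeated coordinate is controlled by a parallel Fourier estimate applied to $\#\{(s,s')\in S_{q}^{2}:2s+s'=n\}$ and $\#\{s\in S_{q}:3s=n\}$, both of which are of strictly smaller order; subtracting them from $r_{3}(n)$ leaves at least one ordered triple with pairwise distinct entries summing to $n$ as soon as $q$ is taken sufficiently large, completing the argument for that infinite subfamily of prime-power moduli.
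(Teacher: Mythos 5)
Your proposal does not close the actual content of the statement. Note first that the paper offers no argument of its own here: it simply cites Cilleruelo \cite[Theorem 2.1]{Cille3}, so the relevant comparison is with that proof. Your choice of the Singer $(q^{2}+q+1,q+1,1)$-difference set is a sensible one (indeed it is the natural candidate, and the Sidon property is correctly disposed of), but the theorem's entire difficulty lies in the covering claim, and that is exactly the step you leave open. As you yourself compute, flatness $|\widehat{1_{S_{q}}}(\chi)|=\sqrt{q}$ for all nontrivial $\chi$ only gives an error of order $q^{3/2}$ against a main term of order $q$, and no argument that uses only the magnitudes of the Fourier coefficients can do better: the needed cancellation in $\sum_{\chi\neq\chi_{0}}\widehat{1_{S_{q}}}(\chi)^{3}\chi(-n)$ is precisely what has to be proved, and saying you ``would attempt to extract additional cancellation'' is not a proof. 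The fallbacks fare no better as stated: a Bose--Chowla set in $\mathbb{Z}/(q^{2}-1)\mathbb{Z}$ has essentially the same cardinality $\sim\sqrt{M}$ and an equally flat spectrum, so it meets the identical obstruction, and the proposed ``alteration preserving the Sidon condition'' is not specified at all. Even your final reduction to pairwise distinct summands is not of strictly smaller order under your own bounds: with only $|\widehat{1_{S_{q}}}(\chi)|=\sqrt{q}$ available, the count of solutions of $2s+s'=n$ is bounded by roughly $q$, the same order as the main term of $r_{3}(n)$, so that subtraction also needs extra input.

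The missing idea is to use the algebraic structure behind the Singer set rather than Fourier flatness. Identifying $\mathbb{Z}/(q^{2}+q+1)\mathbb{Z}$ with $\mathbb{F}_{q^{3}}^{*}/\mathbb{F}_{q}^{*}$ and the Singer set with the classes of $\{1\}\cup\{\theta+a:\ a\in\mathbb{F}_{q}\}$, where $\theta$ generates $\mathbb{F}_{q^{3}}$ over $\mathbb{F}_{q}$ with minimal polynomial $m$, every class is represented by a monic $f$ of degree at most $2$ evaluated at $\theta$, and writing it as a sum of three distinct elements of the Sidon set amounts to producing $\lambda\in\mathbb{F}_{q}^{*}$ for which $m(x)+\lambda f(x)$ (or, in the degenerate cases, the corresponding member of the pencil) splits into three distinct linear factors over $\mathbb{F}_{q}$; the number of such $\lambda$ is $q/6+O(\sqrt{q})$ by a Weil-type estimate, hence positive for $q$ large. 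This is the kind of argument the cited theorem rests on, and it is exactly the ingredient your Fourier-only plan cannot supply; as it stands, your proposal establishes the Sidon property but not the three-fold covering, so the proof is incomplete.
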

\begin{proof}
See Cilleruelo \cite[Theorem 2.1]{Cille3}.
\end{proof}
Equipped with the preceding result we then fix a sufficiently large constant $M>0$ and a Sidon set $C_{M}\subset\mathbb{Z}/M\mathbb{Z}$ satisfying the conclusion of the above theorem and introduce for fixed integer $g\geq 2$ and the parameters \begin{equation}\label{alpha}\alpha=\frac{g+1}{2g+1},\ \ \ \ \ \ \ \ \ \ \ \ \ \ \  \varepsilon=\frac{1}{g},\end{equation} and for a sufficiently large constant $\lambda=\lambda(g)$ the probability space $\mathcal{S}(\alpha,\lambda,C_{M})$ stemming from Lemma \ref{proba} with

\begin{equation}\label{probas}\mathbb{P}(x\in A)=\left\{
	       \begin{array}{ll}
	\lambda^{-1} x^{-\alpha}\ \ \ \ \ \ \ \ \ \ \ \ \ \ \ \ \ \ \ \ \ \ x\equiv c\mmod{M}\text{ for some $c\in C_{M}$}   \\

		0 \ \ \ \ \ \ \ \ \ \ \ \ \ \ \ \ \ \ \ \ \ \ \ \ \ \ \ \ \ \ \text{otherwise.}               \\
	       \end{array}
	     \right.
 \end{equation}We shall also denote for $\bfx=(x_{1},\ldots,x_{h})$ and further convenience $\text{Set}(\bfx)=\{x_{1},\ldots,x_{h}\},$ and introduce when $n\in\mathbb{N}$ the auxiliary function \begin{equation}\label{feo}g_{\lambda}(n)=\lambda^{\frac{2g+5}{1-\alpha}} n^{\varepsilon}(\log n)^{\frac{1}{1-\alpha}},  \end{equation} the box $\mathcal{B}_{n}=[g_{\lambda}(n)/2,g_{\lambda}(n)]\times [n/4,n]^{2}$ and the set 
\begin{equation}\label{prras}R_{n}=\Big\{\{x_{1},x_{2},x_{3}\}\in\mathbb{N}: \ n=x_{1}+x_{2}+x_{3},\ \ \ \ (x_{1},x_{2},x_{3})\in \mathcal{B}_{n}, \ \ \ \ x_{i}\equiv s_{i}\mmod{M}\Big\},\end{equation} wherein $s_{i}$ satisfy the conclusion of Theorem \ref{thm2.1}, and define for each $A\subset\mathbb{N}$ the representation function
$$ R_{n}(A)=\lvert\{\omega \in R_{n}: \omega \subset A\}\rvert.$$ 

It seems worth observing that one may write 
$$R_{n}(A)=\sum_{\substack{\{x_{1},x_{2},x_{3}\}\in R_{n}}}\leavevmode\hbox{$1\!\rm I$}_{A}(x_{1},x_{2},x_{3}),$$
where $\leavevmode\hbox{$1\!\rm I$}_{A}(x_{1},x_{2},x_{3})$ equals 1 if $x_{1},x_{2},x_{3}\in A$ and $0$ else. It transpires then that the above expression is a random variable, the computation of its expected value being presented shortly. We note in view of the definition of $R_{n}$ that the coordinates of the underlying tuples $(x_{1},x_{2},x_{3})$ belong to distinct congruence classes and hence are different, the events $$x_{1}\in A,\ \ \ \ \ \ x_{2}\in A,\ \ \ \ \ x_{3}\in A$$ being therefore independent, whence applying the linearity of the expectation yields
\begin{align*}
\mathbb{E}(R_{n}(A))=&\sum_{\{x_{1},x_{2},x_{3}\}\in R_{n}}\mathbb{P}(x_{1},x_{2},x_{3}\in A)=\sum_{\{x_{1},x_{2},x_{3}\}\in R_{n}}\mathbb{P}(x_{1}\in A)\mathbb{P}(x_{2}\in A)\mathbb{P}(x_{3}\in A).
\end{align*}
We record for further use that with the preceding definitions one has
\begin{equation}\label{varita}\varepsilon(1-\alpha)+1-2\alpha=0\end{equation} and examine the asymptotic behaviour of the above expectation.
\begin{lem}\label{lem1.2}
One has for sufficiently large $n$ that \begin{equation*}\mathbb{E}(R_{n}(A))\asymp \lambda^{2g+2}\log n.\end{equation*}
\end{lem}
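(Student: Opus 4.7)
The plan is to evaluate $\mathbb{E}(R_n(A))$ by inserting the probabilities from \eqref{probas} into the explicit expression derived just before the statement, thereby obtaining
\begin{equation*}
\mathbb{E}(R_n(A))=\lambda^{-3}\sum_{\{x_1,x_2,x_3\}\in R_n}(x_1x_2x_3)^{-\alpha}.
\end{equation*}
Throughout the box $\mathcal{B}_n$ one has $x_1\asymp g_\lambda(n)$ and $x_2,x_3\asymp n$; moreover $g_\lambda(n)=o(n)$ since $\varepsilon=1/g<1$. The summand is therefore essentially constant of order $g_\lambda(n)^{-\alpha}n^{-2\alpha}$, so the task reduces to estimating $\lvert R_n\rvert$.

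To enumerate $R_n$, I would fix $x_1$ in the arithmetic progression of common difference $M$ inside $[g_\lambda(n)/2,g_\lambda(n)]$, which offers $\asymp g_\lambda(n)/M$ choices, and then let $x_2$ range over the residue class $s_2$ modulo $M$, with $x_3=n-x_1-x_2$ forced. The congruence $x_3\equiv s_3 \mmod{M}$ is automatic by Theorem \ref{thm2.1}, since the latter guarantees $s_1+s_2+s_3\equiv n \mmod{M}$. The conditions $x_2\in[n/4,n]$ and $x_3\in[n/4,n]$ then restrict $x_2$ to an interval of length $\asymp n$ (the $x_1$-dependence being negligible as $x_1=o(n)$), producing $\asymp n/M$ admissible values. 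Because $s_1,s_2,s_3$ are pairwise distinct elements of $C_M$, the three coordinates lie in distinct residue classes, so each unordered triple is counted exactly once and $\lvert R_n\rvert\asymp g_\lambda(n)\cdot n$.

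Combining the two bounds and substituting the definition \eqref{feo} of $g_\lambda(n)$ yields
\begin{equation*}
\mathbb{E}(R_n(A))\asymp\lambda^{-3}g_\lambda(n)^{1-\alpha}n^{1-2\alpha}=\lambda^{2g+2}\,n^{\varepsilon(1-\alpha)+1-2\alpha}\log n,
\end{equation*}
and the exponent of $n$ vanishes by the identity \eqref{varita}, which delivers the claim. No genuine obstacle is anticipated; the only points requiring attention are confirming that the constraint $x_3\geq n/4$ still leaves an $x_2$-interval of full length $\asymp n$ (rather than being truncated by the $x_1$-shift), and noting that the distinct residues of the coordinates make ordered and unordered counts agree up to a constant depending only on $M$.
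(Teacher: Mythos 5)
Your proposal is correct and follows essentially the same route as the paper: both reduce the expectation to $\lambda^{-3}\sum_{\{x_1,x_2,x_3\}\in R_n}(x_1x_2x_3)^{-\alpha}$, observe the summand is $\asymp g_\lambda(n)^{-\alpha}n^{-2\alpha}$ on the box, and then count $\lvert R_n\rvert\asymp g_\lambda(n)\,n\,M^{-2}$ before invoking \eqref{varita}. The only cosmetic difference is that the paper counts $R_n$ by the substitution $x_i=My_i+s_i$ and lattice-point counting, whereas you fix $x_1,x_2$ directly and note the congruence on $x_3$ is automatic; both give the same count.
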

\begin{proof}
We apply the definition (\ref{probas}) and (\ref{varita}) to the preceding line and obtain
\begin{align}\label{ela}\mathbb{E}(R_{n}(A))\ll \lambda^{-3}\sum_{\substack{  \{x_{1},x_{2},x_{3}\}\in R_{n}}}(x_{1}x_{2}x_{3})^{-\alpha}&\ll\lambda^{-3}g_{\lambda}(n)^{-\alpha}n^{-2\alpha}\lvert R_{n}\rvert\ll \lambda^{2g+2}\log n.\end{align}
Likewise, we observe that
$$\mathbb{E}( R_{n}(A))\gg \lambda^{-3-\alpha\frac{2g+5}{1-\alpha}}n^{-\alpha(2+\varepsilon)}(\log n)^{-\frac{\alpha}{1-\alpha}}\lvert R_{n}\rvert.$$
We also note that $\lvert R_{n}\rvert$ is equal to the number of triples $(y_{1},y_{2},y_{3})\in\mathbb{N}^{3}$ having the property that $$y_{1}+y_{2}+y_{3}=(n-s_{1}-s_{2}-s_{3})/M$$ and satisfying the additional constraints $y_{1}\in [\big(g_{\lambda}(n)/2-s_{1}\big)/M,\big(g_{\lambda}(n)-s_{1}\big)/M]$ and $$y_{i}\in [(n/4-s_{i})/M,(n-s_{i})/M],\ \ \ \ \ \ \ i=2,3,$$ whence the preceding discussion enables one to deduce the estimate
$$\lvert R_{n}\rvert\asymp \lambda^{\frac{2g+5}{1-\alpha}} n^{1+\varepsilon}(\log n)^{\frac{1}{1-\alpha}}M^{-2}.$$ We remind the reader of the fact that the parameter $M$ is fixed, such an observation in conjunction with the above lower bound and (\ref{varita}) permiting one to derive 
$$\mathbb{E}( R_{n}(A))\gg \lambda^{2g+2}\log n,$$ as desired.

\end{proof}
\section{The lower tail}\label{sec4}
We shall make use of Janson's inequality \cite{Janson1} in the present section to provide a lower bound for the random variable $R_{n}(A)$ in a set with high probability, it being pertinent to such an end to verify some preliminary required conditions. 
\begin{prop}[Janson's inequality]\label{prop1}   Let $\Omega$ be a family of sets and let $A$ be a random subset. We further consider $X(A)=|\{\omega \in \Omega: \omega \subset A\}|$ with finite expected value $\mu=\mathbb{E}(X(A))$. Then for fixed $\delta>0$ one has
$$\mathbb{P}(X\leq(1-\delta)\mu)\leq \exp(-\delta^2\mu^2/(2\mu+\Delta(\Omega))),
$$
where
\begin{equation}\label{omegita}\Delta(\Omega)=\sum_{\substack{\omega,\omega'\in\Omega\\ \omega\sim\omega'}}\mathbb{P}(\omega,\omega'\subset A)\end{equation}
and\ $\omega\sim\omega'$ denotes $\omega\ \cap\omega'\neq \o$ with $\omega\neq\omega'$. In particular, upon making the choice $\delta= {1}/{2}$, one has when $\Delta(\Omega)<\mu$ that
$$\mathbb{P}(X\leq \mu/2)\leq \exp(-\mu/12).$$
\end{prop}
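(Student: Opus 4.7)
The plan is to apply the exponential-moment (Chernoff) method. Writing $X = \sum_{\omega \in \Omega} Y_\omega$ with $Y_\omega = \leavevmode\hbox{$1\!\rm I$}[\omega \subseteq A]$, Markov's inequality applied to $e^{-tX}$ yields for every $t \geq 0$ that
\begin{equation*}
\mathbb{P}(X \leq (1-\delta)\mu) = \mathbb{P}\bigl(e^{-tX} \geq e^{-t(1-\delta)\mu}\bigr) \leq e^{t(1-\delta)\mu}\,\mathbb{E}(e^{-tX}),
\end{equation*}
so the whole argument reduces to securing the Laplace-transform bound
\begin{equation*}
\log \mathbb{E}(e^{-tX}) \leq -t\mu + \tfrac{t^2}{2}\bigl(\mu + \Delta(\Omega)\bigr)
\end{equation*}
for $t \geq 0$ in an appropriate range and then optimising the free parameter.

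The Laplace-transform estimate I would extract by analysing the cumulant generating function $\psi(t) = \log \mathbb{E}(e^{-tX})$, which satisfies $\psi(0) = 0$ and $\psi'(0) = -\mu$, and whose second derivative $\psi''(t)$ equals the variance of $X$ under the exponentially tilted measure with density $e^{-tX}/\mathbb{E}(e^{-tX})$. At $t = 0$ the bound $\psi''(0) \leq \mu + \Delta(\Omega)$ follows from decomposing
\begin{equation*}
\mathrm{Var}(X) = \sum_{\omega \in \Omega} \mathrm{Var}(Y_\omega) + \sum_{\omega \neq \omega'} \mathrm{Cov}(Y_\omega, Y_{\omega'}),
\end{equation*}
noting that $\mathrm{Var}(Y_\omega) \leq \mathbb{E}(Y_\omega)$ since each $Y_\omega$ is an indicator, observing that the independence provided by Lemma~\ref{proba}(ii) forces $\mathrm{Cov}(Y_\omega, Y_{\omega'}) = 0$ whenever $\omega \cap \omega' = \varnothing$, and bounding the residual pairs $\omega \sim \omega'$ crudely via $\mathrm{Cov}(Y_\omega, Y_{\omega'}) \leq \mathbb{E}(Y_\omega Y_{\omega'})$ to recover exactly $\Delta(\Omega)$. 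Since each $Y_\omega$ is a monotone functional of the independent Bernoulli inclusion indicators $\leavevmode\hbox{$1\!\rm I$}[x \in A]$ supplied by Lemma~\ref{proba}, the Harris--FKG inequality propagates the variance estimate to the tilted measures, giving $\psi''(t) \leq \mu + \Delta(\Omega)$ uniformly for $t \geq 0$, and two integrations from $0$ deliver the desired cumulant bound.

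Feeding this into the Chernoff inequality and choosing $t = \delta\mu/(\mu + \Delta(\Omega))$ produces the estimate $\exp(-\delta^{2}\mu^{2}/(2(\mu + \Delta(\Omega))))$; the marginally sharper denominator $2\mu + \Delta(\Omega)$ appearing in the statement is obtained by a refined optimisation exploiting the identity $e^{-tY_\omega} = 1 - (1 - e^{-t})Y_\omega$ valid for $Y_\omega \in \{0,1\}$, which allows one to replace the quadratic Taylor expansion by the tighter single-exponential form before optimising in $s = 1 - e^{-t}$. Specialising $\delta = 1/2$ with $\Delta(\Omega) < \mu$ then bounds the denominator by $3\mu$ and recovers the announced $\exp(-\mu/12)$. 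The principal obstacle is the passage from the variance estimate at $t = 0$ to its analogue for positive $t$: without a correlation-preserving property of the exponential tilt one controls only $\psi''(0)$ and the Chernoff optimisation collapses, which is precisely what the Harris--FKG monotonicity argument is called upon to remedy.
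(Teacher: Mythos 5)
The paper itself offers no argument here: it quotes the inequality and refers to Janson's paper, so your proposal has to be measured against the standard proof. Your Chernoff skeleton and the computation at $t=0$ (namely $\psi'(0)=-\mu$ and $\psi''(0)=\mathrm{Var}(X)\leq\mu+\Delta(\Omega)$) are fine, but the step on which everything hinges is unjustified and, as stated, does not work. The exponentially tilted measure $e^{-tX}\,d\mathbb{P}/\mathbb{E}(e^{-tX})$ is \emph{not} a product measure, so the Harris--FKG inequality cannot simply be ``propagated'' to it: the FKG lattice condition for this measure would require $X$ to be submodular, whereas $X=\sum_{\omega}\prod_{x\in\omega}\leavevmode\hbox{$1\!\rm I$}[x\in A]$ is supermodular, so the condition fails in the direction you need. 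Worse, your variance decomposition collapses under tilting: for disjoint $\omega,\omega'$ the covariance $\mathrm{Cov}_{t}(Y_{\omega},Y_{\omega'})$ is no longer zero, and even if some positive-association statement were available it would bound these covariances from \emph{below}, not above; controlling them is precisely the difficulty, and no bound of the form $\psi''(t)\leq\mu+\Delta(\Omega)$ for all $t\geq0$ is proved (nor is such a bound part of any proof I know of). The closing appeal to a ``refined optimisation'' via $e^{-tY_{\omega}}=1-(1-e^{-t})Y_{\omega}$ to sharpen the denominator is likewise only asserted.

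The way to repair the argument is the one used by Janson and by Boppana--Spencer: bound the \emph{first} derivative, working throughout under the original product measure. With $\Psi(t)=\mathbb{E}(e^{-tX})$ one has $-\Psi'(t)=\sum_{\omega}\mathbb{E}\bigl(Y_{\omega}e^{-tX}\bigr)$; for each $\omega$ condition on the event $\omega\subset A$ (which leaves the remaining coordinates independent), split $X=X_{\omega}+Z_{\omega}$ where $X_{\omega}$ collects the terms $Y_{\omega'}$ with $\omega'\cap\omega\neq\o$ and $Z_{\omega}$ is independent of the coordinates in $\omega$, and apply Harris--FKG under that conditional product measure to the two decreasing functions $e^{-tX_{\omega}}$ and $e^{-tZ_{\omega}}$; together with $e^{-u}\geq1-u$ and $\mathbb{E}(e^{-tZ_{\omega}})\geq\Psi(t)$ this gives
\begin{equation*}
-\frac{d}{dt}\log\Psi(t)\;\geq\;\mu-t\sum_{\omega}\mathbb{E}(Y_{\omega}X_{\omega})\;=\;\mu-t\bigl(\mu+\Delta(\Omega)\bigr),
\end{equation*}
and integrating in $t$ and optimising the Chernoff parameter yields the lower-tail bound with denominator $2\bigl(\mu+\Delta(\Omega)\bigr)$, which is what the classical argument actually delivers; this is all that is needed for the corollary exploited later in the paper (when $\Delta(\Omega)<\mu$ one still gets $\mathbb{P}(X\leq\mu/2)\leq e^{-c\mu}$ with an absolute constant $c>0$, and the precise value of $c$ is immaterial in Lemma \ref{lem1.4}). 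In short: keep the Laplace-transform frame, but replace the tilted-measure variance claim by the conditioning-plus-FKG bound on $\Psi'$, or else simply cite Janson as the paper does.
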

\begin{proof}
See \cite{Janson1}.
\end{proof}
In view of the above considerations and upon taking $X(A)=R_{n}(A)$ in the context underlying Janson's inequality, the preceding discussion reduces the problem to that of obtaining the estimate $\Delta(R_{n})<\mu_{n}$ for sufficiently large $n$, wherein $\mu_{n}=\mathbb{E}(R_{n}(A))$.

\begin{lem}\label{lem1.3}
Whenever $n\rightarrow \infty $ one has that \begin{equation}\label{pasta}\Delta\big(R_{n}\big)=o (\mu_{n}).\end{equation}
\end{lem}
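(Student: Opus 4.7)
The plan is to bound the Janson sum $\Delta(R_n)$ by a case analysis of the ways two distinct triples in $R_n$ can intersect, each case reducing to an inner sum controlled by Lemma \ref{lem1.1}. I begin with two structural observations. First, any two triples $\omega,\omega'\in R_n$ sharing two elements must coincide, since the third element is forced by the requirement that both sums equal $n$; hence every pair contributing to $\Delta(R_n)$ shares exactly one element. Second, because $s_1,s_2,s_3$ lie in pairwise distinct residue classes modulo $M$, each element of a triple in $R_n$ is pinned to a unique position by its residue, so the common element $z\in\omega\cap\omega'$ occupies the same position $i\in\{1,2,3\}$ in both triples. This yields the decomposition $\Delta(R_n)=\Delta_1+\Delta_2+\Delta_3$ indexed by the position of the shared coordinate.

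For each $i$ I would bound
\[\Delta_i\ll \sum_{z}\mathbb{P}(z\in A)\,\Big(\sum_{(u,v)}\mathbb{P}(u\in A)\,\mathbb{P}(v\in A)\Big)^2,\]
where $z$ ranges over admissible values of the shared coordinate in position $i$ and $(u,v)$ runs over pairs of the remaining two coordinates constrained by $u+v=n-z$ and lying in the corresponding subboxes of $\mathcal{B}_n$. The diagonal is formally excluded by the condition $\omega\neq\omega'$, but its inclusion only inflates the sum, so the square is a valid upper bound. For $i=1$ both $u,v$ lie in $[n/4,n]$ and Lemma \ref{lem1.1} yields an inner sum of order $\lambda^{-2}n^{1-2\alpha}$, while $\sum_{z\asymp g_\lambda(n)}\mathbb{P}(z\in A)\ll\lambda^{-1}g_\lambda(n)^{1-\alpha}$, giving $\Delta_1\ll\lambda^{-5}g_\lambda(n)^{1-\alpha}n^{2-4\alpha}$. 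For $i\in\{2,3\}$ the two remaining coordinates straddle the small and large boxes, so the inner sum is of order $\lambda^{-2}n^{-\alpha}g_\lambda(n)^{1-\alpha}$, and summing $\mathbb{P}(z)$ over $z\asymp n$ contributes $\lambda^{-1}n^{1-\alpha}$, yielding $\Delta_i\ll\lambda^{-5}g_\lambda(n)^{2(1-\alpha)}n^{1-3\alpha}$.

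It then remains to substitute the explicit formula (\ref{feo}) and apply (\ref{varita}) in the form $\varepsilon(1-\alpha)=2\alpha-1$, which gives $g_\lambda(n)^{1-\alpha}\asymp\lambda^{2g+5}n^{2\alpha-1}\log n$. The powers of $n$ then combine to produce $\Delta_1\ll\lambda^{2g}n^{1-2\alpha}\log n$ and $\Delta_i\ll\lambda^{4g+5}n^{\alpha-1}(\log n)^2$ for $i=2,3$. Since $1/2<\alpha<1$ by (\ref{alpha}), both quantities are $o(\lambda^{2g+2}\log n)=o(\mu_n)$ in view of Lemma \ref{lem1.2}, which proves (\ref{pasta}).

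The one delicate point I expect to monitor is the $\lambda$-bookkeeping in the case $i=1$, where the margin is tight: the exponent $(2g+5)/(1-\alpha)$ prescribed in (\ref{feo}) is precisely what is needed so that $g_\lambda(n)^{1-\alpha}$ carries $\lambda^{2g+5}$, leaving $\Delta_1$ with $\lambda$-content $\lambda^{2g}$, exactly two powers below the $\lambda^{2g+2}$ inside $\mu_n$. Otherwise the estimates are routine, the $n$-saving stemming entirely from the strict inequalities $1/2<\alpha<1$ available from (\ref{alpha}).
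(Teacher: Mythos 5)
Your proof is correct and follows essentially the same route as the paper: split $\Delta(R_n)$ according to whether the (necessarily unique) shared element is the small coordinate or one of the two large ones, bound the inner sums via Lemma \ref{lem1.1}, and use $\varepsilon(1-\alpha)=2\alpha-1$ together with $1/2<\alpha<1$ to obtain the power savings $n^{1-2\alpha}$ and $n^{\alpha-1}$ before comparing with $\mu_n\asymp\lambda^{2g+2}\log n$ from Lemma \ref{lem1.2}. The only remark is that the $\lambda$-bookkeeping you flag as delicate is in fact immaterial here, since $\lambda$ is a fixed constant and the polynomial saving in $n$ already yields $\Delta(R_n)=o(\mu_n)$ (the paper simply absorbs these powers into $\ll_\lambda$ and concludes $\Delta(R_n)\ll_\lambda n^{-\delta}$).
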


\begin{proof} 
It is apparent that
\begin{equation*}\Delta(R_{n})\ll \sum_{\substack{\{x_{1},x_{2},x_{3}\} \in R_{n}\\\{x_{1},x'_{2},x'_{3}\}\in R_{n}\\ \{x_{2},x_{3}\}\neq \{x_{2}',x_{3}'\}}}\mathbb{P}(x_{1},x_{2},x_{3},x'_{2},x'_{3} \in A),\end{equation*}
wherein we employed the proviso $\omega\neq \omega'$ underlying the corresponding sum in (\ref{omegita}) in conjunction with the fact that the cognate elements of the sets satisfy the lineal equation $x_{1}+x_{2}+x_{3}=n$ to deduce that $\lvert\omega\cap \omega'\rvert=1.$ We recall to the reader that all the variables involved are pairwise disjoint and distinguish among both the instance when the underlying intersection of the sets comprises an element $x_{1}\leq g_{\lambda}(n)$ and when it does not, namely
\begin{align*} \nonumber
\Delta(R_{n})&\ll_{\lambda}\sum_{x_{1}\asymp g_{\lambda}(n)}x_{1}^{-\alpha}\bigg(\sum_{\substack{\{x_{1},x_{2},x_{3}\}\in R_{n}}}(x_{2}x_{3})^{-\alpha}\bigg)^2+\sum_{\substack{x_{2}\asymp n}}x_{2}^{-\alpha}\bigg(\sum_{\substack{x_{1}+x_{3}=n-x_{2}\\ x_{1}\asymp g_{\lambda}(n)\\ x_{3}\asymp n}}x_{1}^{-\alpha}x_{3}^{-\alpha}\bigg)^2
\\
&\ll_{\lambda} n^{2-4\alpha}\sum_{x_{1}\asymp g_{\lambda}(n)}x_{1}^{-\alpha}+(\log n)^{B}n^{-2\alpha+2\varepsilon(1-\alpha)}\sum_{\substack{x_{2}\asymp n}}x_{2}^{-\alpha}
\end{align*}
for some big enough constant $B>0$, which then enables one to deduce
\begin{equation}\label{prisa}\Delta(R_{n})\ll_{\lambda} (\log n)^{B}(n^{\varepsilon(1-\alpha)+2-4\alpha}+n^{1-3\alpha+2\varepsilon(1-\alpha)}).\end{equation}

We shall omit indicating that $B>0$ is some constant which may vary from line to line throughout the memoir for the sake of brevity. It seems worth noting upon recalling (\ref{varita}), the proviso $\varepsilon<1$ and the inequality $1/2<\alpha<1$, these in turn stemming from (\ref{alpha}), that  
\begin{equation*}1-3\alpha+2\varepsilon(1-\alpha)<2(1-2\alpha)+\varepsilon(1-\alpha)<\varepsilon(1-\alpha)+1-2\alpha=0.
\end{equation*}
Therefore, combining equation (\ref{prisa}) with the above lines permits one to deduce 
$$\Delta(R_{n})=o(n^{-\delta})$$ for some fixed $\delta>0$, it entailing (\ref{pasta}) when applied in conjunction with Lemma \ref{lem1.2}.

\end{proof}
In view of the preceding conclusion, it transpires that we have reached a position from which to apply Janson's inequality.
\begin{lem}\label{lem1.4}
For a sufficiently large constant $\lambda=\lambda(g)$ one has with probability $1$ that $$R_{n}(A)\gg \lambda^{2g+2}\log n$$ holds for $n$ sufficiently large.
\end{lem}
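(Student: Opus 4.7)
The plan is to feed the bounds already obtained into Janson's inequality and then apply Borel--Cantelli, the parameter $\lambda$ being chosen at the end large enough to ensure summability of the relevant tail probabilities.

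First I would invoke Proposition \ref{prop1} with $\Omega = R_n$, $X(A)=R_n(A)$ and $\delta=1/2$. The hypothesis $\Delta(R_n) < \mu_n$ required for the second assertion of that proposition follows for every sufficiently large $n$ from Lemma \ref{lem1.3}, which delivers the stronger conclusion $\Delta(R_n)=o(\mu_n)$. One thereby obtains, for every sufficiently large $n$, the inequality
\begin{equation*}
\mathbb{P}\bigl(R_n(A)\leq \mu_n/2\bigr)\leq \exp(-\mu_n/12).
\end{equation*}

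Next I would combine this tail estimate with the lower bound $\mu_n \gg \lambda^{2g+2}\log n$ furnished by Lemma \ref{lem1.2}. Writing $c>0$ for the implicit constant (which is independent of $\lambda$ and of $n$), this yields
\begin{equation*}
\mathbb{P}\bigl(R_n(A)\leq \mu_n/2\bigr) \leq \exp\bigl(-c\lambda^{2g+2}(\log n)/12\bigr) = n^{-c\lambda^{2g+2}/12}.
\end{equation*}
I would then choose the constant $\lambda=\lambda(g)$ large enough so that $c\lambda^{2g+2}/12 > 2$; this guarantees that the series $\sum_{n}\mathbb{P}\bigl(R_n(A)\leq \mu_n/2\bigr)$ converges. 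Applying Lemma \ref{prop2} (Borel--Cantelli) to the events $E_n=\{R_n(A)\leq \mu_n/2\}$ then gives that, with probability $1$, only finitely many of the events $E_n$ occur. Invoking once more the lower bound on $\mu_n$ from Lemma \ref{lem1.2}, one concludes that, with probability $1$, the estimate $R_n(A)\gg \lambda^{2g+2}\log n$ holds for every sufficiently large $n$, as required.

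There is no real obstacle here: Lemmas \ref{lem1.2} and \ref{lem1.3} were precisely the ingredients needed to put Janson's inequality into action, and the only mild subtlety is the calibration of $\lambda$, which must be taken sufficiently large in terms of $g$ so as to make the logarithmic saving in the exponent of $\exp(-\mu_n/12)$ win against the counting of integers in the Borel--Cantelli step. This is consistent with the role of $\lambda$ throughout this part of the paper, as a parameter that can always be enlarged at the outset.
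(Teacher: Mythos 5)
Your proposal is correct and follows essentially the same route as the paper: Janson's inequality with $\delta=1/2$ justified by Lemma \ref{lem1.3}, the lower bound $\mu_n\gg\lambda^{2g+2}\log n$ from Lemma \ref{lem1.2}, the choice of $\lambda$ large enough to make the tail probabilities $O(n^{-2})$ and hence summable, and then Borel--Cantelli. No discrepancies worth noting.
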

\begin{proof}
We observe first that an application of Lemma \ref{lem1.3} in conjunction with Lemma \ref{lem1.2} and Proposition \ref{prop1} enables one to deduce the estimate 
\begin{align*}\mathbb{P}(R_{n}(A)\leq \mu_{n}/2)\leq \exp(-\mu_{n}/12)&\ll  \exp\big(-c\lambda^{2g+2}(\log n)\big)
\end{align*} for some constant $c>0$. The reader may find it useful to observe when $\lambda$ is a sufficiently large constant that the right side of the above line is $O(n^{-2})$. Consequently,
\begin{equation*}
\sum_{n=1}^{\infty}\mathbb{P}\big(R_{n}(A)<\mu_{n}/2\big)<\infty.
\end{equation*}
The statement follows combining the above line with a routine application of Lemma \ref{prop2} (Borel-Cantelli).
\end{proof}

\section{Construction of $B_{2}[g]$ sequences}\label{sec5}
Sketching the argument that shall eventually enable one to effectively control the representation function $r_{A}(x)$ is the main purpose of the present section. The approach taken henceforth will consist of a procedure which assures that with high probability the contribution to $R_{n}(A)$ of triples having the property that at least one of their components is involved in the representation of some integer having at least $g+1$ representations in the prescribed manner is of a significantly smaller size than $R_{n}(A)$. We shall put ideas into effect by recalling (\ref{prras}) and furnishing ourselves for any $g\geq 2$ with the definition of the set 
\begin{equation}\label{jood}T_{n}=\Big\{\bfx\in\mathbb{N}^{2g+4}:\ \{x_{1},x_{2},x_{3}\}\in R_{n},\ \ x_{1}+x_{4}=x_{5}+x_{6}=\ldots=x_{2g+3}+x_{2g+4},\ \ x_{1}\geq x_{i}\Big\},\end{equation} wherein the variables satisfy the additional restraints 
\begin{equation}\label{modu}x_{1}\equiv x_{2k+1}\mmod{M}\ \ \ \text{for $2\leq k\leq g+1$},\ \ \ \ \ \ \ \ \ \ \ \ x_{4}\equiv x_{2k+2}\mmod{M}\ \ \ \text{for $2\leq k\leq g+1$}\end{equation} and
\begin{equation}\label{kkk}\{x_{1},x_{4}\}\neq \{x_{2k+1},x_{2k+2}\}\neq \{x_{2j+1},x_{2j+2}\}\end{equation} for every $2\leq j,k\leq g+1$ with $j\neq k$. It also seems worth considering the subset $$T_{n}^{ind}=\Big\{\bfx\in T_{n}:\ \ \ x_{i}\neq x_{j}\ \ \ \text{for $1\leq i< j\leq 2g+4$}\Big\}.$$
Moreover, we introduce for a given sequence of integers $A\subset\mathbb{N}$ the sets
\begin{equation}\label{AAAAA}T_{n}(A)=\Big\{\bfx\in T_{n}:\ \ \ \text{Set}(\bfx)\subset A\Big\},\ \ \ \ \ \ \ T_{n}^{ind}(A)=\Big\{\bfx\in T_{n}^{ind}:\ \ \ \text{Set}(\bfx)\subset A\Big\}.\end{equation} 

Equipped with the above considerations, we construct for each $A\subset\mathbb{N}$ a sequence of integers $A'$ by deleting from $A$ every $a_{1}\in A$ having the property that
$$a_{1}+a_{2}=a_{3}+a_{4}=\ldots=a_{2g+1}+a_{2g+2},\ \ \ \ \ \ \ \ \ a_{1}\geq a_{i},$$ for some $a_{i}\in A$ and such that $$\{a_{2i-1},a_{2i}\}\neq \{a_{2j-1},a_{2j}\} \ \ \ \ \ \ \text{for $1\leq i<j\leq g+1$}$$
with $a_{i}\equiv c_{i}\mmod{M}$ for some $c_{i}\in C_{M}$, the latter being the Sidon set $C_{M}$ employed in (\ref{probas}). In view of the preceding remarks, it transpires upon recalling (\ref{prras}) to the reader that any set $\{x_{1},x_{2},x_{3}\}\subset A$ counted by $R_{n}(A)$ with one of the elements, say $x_{1}$, being removed in the construction of $A'$ satisfies in particular $\{x_{1},x_{2},x_{3}\}\in R_{n}$ with
$$x_{1}+x_{4}=x_{5}+x_{6}=\ldots=x_{2g+3}+x_{2g+4},\ \ \ \ \ \ x_{1}\geq x_{i}\ \ \ \ \ \text{for $x_{i}\in A,\ \ 4\leq i\leq 2g+4$,}$$ having the customary property that \begin{equation*}\{x_{1},x_{4}\}\neq \{x_{2i+1},x_{2i+2}\}\neq \{x_{2j+1},x_{2j+2}\} \ \ \ \ \ \ \text{for $2\leq i,j\leq g+1$ with $i\neq j$}.\end{equation*} Such properties and the fact that $C_{M}$ is Sidon then enables one to deduce by relabelling if required that the variables satisfy (\ref{modu}), an ensuing consequence of which being that 
$$0\leq R_{n}(A)-R_{n}(A')\leq \lvert T_{n}(A)\rvert,$$ it in turn entailing the validity of the expression
\begin{equation}\label{AAA}R_{n}(A')= R_{n}(A)+O\big( \lvert T_{n}(A)\rvert\big).\end{equation} Since $A'$ is a $B_{2}[g]$ sequence, the above assertion in conjunction with Lemma \ref{lem1.4} thereby reduces our assessment to that of establishing with high probability that $\lvert T_{n}(A)\rvert=o_{\lambda}(R_{n}(A))$.

\section{Expected value of $\lvert T_{n}(A)\rvert$.}\label{sec7}
We shall proceed in the upcoming sections to provide an upper bound for $\lvert T_{n}(A)\rvert$ of the requisite precision almost surely. To such an end, we introduce first the sets
\begin{equation}\label{alej}T_{n}^{dep}=\Big\{\bfx\in T_{n}:\ \ \ \ x_{i}=x_{j}\ \text{for some $i\neq j$}\Big\},\ \  \ T_{n}^{dep}(A)=\Big\{\bfx\in T_{n}^{dep}:\ \ \ \text{Set}(\bfx)\subset A\Big\}\end{equation}
and furnish ourselves with the computation of their expected values as a prelude to the discussion concerning the upper tail of $\lvert T_{n}(A)\rvert$.
\begin{lem}\label{lem7}
For sufficiently large $n$ and fixed constant $\lambda>1$ one has
\begin{equation}\label{als}\mathbb{E}(\lvert T_{n}^{ind}(A)\rvert)\ll \lambda \log n,\ \ \ \ \ \ \ \ \ \ \ \ \mathbb{E}(\lvert T_{n}^{dep}(A)\rvert)\ll_{\lambda} n^{(1-2\alpha)\varepsilon}\log n.
\end{equation} 

\end{lem}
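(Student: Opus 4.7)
My plan for the first bound, on $\mathbb{E}(\lvert T_n^{ind}(A)\rvert)$, is to use the pairwise distinctness of the coordinates $(x_1,\dots,x_{2g+4})$ on $T_n^{ind}$ to invoke the independence asserted in Lemma \ref{proba}, yielding
$$
\mathbb{E}(\lvert T_n^{ind}(A)\rvert)\ll\lambda^{-(2g+4)}\sum_{\bfx\in T_n^{ind}}\prod_{i=1}^{2g+4}x_i^{-\alpha}.
$$
For fixed $x_1$ and $x_4$, the $g$ pairs $(x_{2k+1},x_{2k+2})$ with $2\leq k\leq g+1$ share the common sum $s=x_1+x_4$, so an application of Lemma \ref{lem1.1} to each inner double sum (the residue classes modulo $M$ only affect implicit constants) delivers a factor $\ll s^{1-2\alpha}$, whereas the $(x_2,x_3)$ sum contributes $\ll n^{1-2\alpha}$. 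Since $x_4\leq x_1$ one has $s\asymp x_1$, so after summing over $x_4\leq x_1$ (which contributes $O(x_1^{1-\alpha})$ since $\alpha<1$) and then over $x_1\asymp g_\lambda(n)$, using $\alpha=(g+1)/(2g+1)$ and the resulting identity $(g+1)(1-2\alpha)=-\alpha$, one arrives at
$$
\mathbb{E}(\lvert T_n^{ind}(A)\rvert)\ll \lambda^{-(2g+4)}n^{1-2\alpha}g_\lambda(n)^{1-\alpha}.
$$
Invoking (\ref{varita}) to cancel $n^{1-2\alpha}\cdot n^{\varepsilon(1-\alpha)}=1$ and unpacking the definition (\ref{feo}) of $g_\lambda(n)$ then produces the claimed upper bound of $O(\lambda\log n)$.

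The second bound follows from a short case analysis of the possible coincidence patterns on $T_n^{dep}$. The Sidon property of $C_M$ and the pairwise distinctness of $s_1,s_2,s_3$ in Theorem \ref{thm2.1} preclude collisions within $\{x_1,x_2,x_3\}$, whereas the size gap between $g_\lambda(n)$ and $n/4$ precludes any equality between an element of $\{x_2,x_3\}$ and another coordinate. The pair inequalities (\ref{kkk}) combined with the sum relation $x_{2k+1}+x_{2k+2}=x_1+x_4$ then rule out every identification of the form $x_1=x_{2k+1}$, $x_1=x_{2k+2}$, $x_4=x_{2k+1}$, $x_4=x_{2k+2}$, or $x_{2j+s}=x_{2k+s'}$ with $j\neq k$: in each such case the sum relation forces the pair $\{x_{2k+1},x_{2k+2}\}$ to coincide as a set either with $\{x_1,x_4\}$ or with $\{x_{2j+1},x_{2j+2}\}$, which is forbidden by (\ref{kkk}). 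The sole surviving single-identification cases are thus $x_1=x_4$ and $x_{2k+1}=x_{2k+2}$ for some $k\geq 2$; multiple simultaneous identifications only make the bounds smaller.

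In each surviving case the identification eliminates one independent event (hence contributes a compensating factor $\lambda$) and one degree of freedom in the counting. Repeating the computation of the first paragraph in each case produces an $x_1$-exponent equal to exactly $-1$, so that $\sum_{x_1\asymp g_\lambda(n)}x_1^{-1}\ll \log n$, and the total contribution of each case is $\ll_\lambda n^{1-2\alpha}\log n$. Since $g\geq 2$ and $1-2\alpha<0$ one has $1-2\alpha<(1-2\alpha)/g=(1-2\alpha)\varepsilon$, whence $n^{1-2\alpha}\leq n^{(1-2\alpha)\varepsilon}$ for large $n$, delivering $\mathbb{E}(\lvert T_n^{dep}(A)\rvert)\ll_\lambda n^{(1-2\alpha)\varepsilon}\log n$ as required. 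The principal obstacle here is the bookkeeping of the case analysis: once it is confirmed that the non-obvious coincidences are all forbidden by (\ref{modu}) and (\ref{kkk}), each surviving subcase reduces to a near-verbatim repetition of the computation of the first paragraph, with one fewer variable.
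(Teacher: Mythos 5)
Your proof is correct, and for the first estimate it is essentially the paper's own computation: the paper organises the sum via the bound (\ref{ela}) for $\sum_{R_n}(x_1x_2x_3)^{-\alpha}$ rather than summing $x_1\asymp g_\lambda(n)$ explicitly, but the steps (Lemma \ref{lem1.1} on the $g$ pairs, summation over $x_4\leq x_1$, the identity $(1-2\alpha)(g+1)=-\alpha$ and then (\ref{varita})) are the same. For the dependent part you take a slightly leaner route: the paper excludes only the collisions $x_2=x_{2i+1}$ (and the double even-index collisions) via the congruences (\ref{modu}), and then keeps and bounds two further families, namely $x_4\in\{x_2,x_3\}$ (its $T_{n,1}$) and $x_2$ or $x_3$ equal to an even-indexed coordinate (its $T_{n,2}$); you instead note that $x_4,\dots,x_{2g+4}\leq x_1+x_4\leq 2g_\lambda(n)=o(n)$ while $x_2,x_3\geq n/4$, so every collision involving $x_2$ or $x_3$ is vacuous for large $n$. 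That is a legitimate simplification (it in fact shows the paper's $T_{n,1},T_{n,2}$ are empty for $n$ large), and your two surviving cases are exactly the paper's $T_{n,3}$ and $T_{n,4}$; your claim that the $x_1$-exponent is exactly $-1$ matches the paper's $x_1^{(1-2\alpha)g-\alpha}$, since $(1-2\alpha)g-\alpha=-1$ under (\ref{alpha}), and the final comparison $n^{1-2\alpha}\leq n^{(1-2\alpha)\varepsilon}$ is fine. One small remark: the aside that ``multiple simultaneous identifications only make the bounds smaller'' is asserted rather than proved, but it is harmless because such configurations cannot occur at all: two within-pair equalities (or one of them together with $x_1=x_4$) would force two of the pairs sharing the common sum $x_1+x_4$ to coincide as sets, contradicting (\ref{kkk}).
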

\begin{proof}

We start our examination by analysing first $\mathbb{E}(\lvert T_{n}^{ind}(A)\rvert)$ and obtain via subsequent applications of Lemma \ref{lem1.1} that
\begin{align*}
\mathbb{E}(\lvert T_{n}^{ind}(A)\rvert)&\ll \lambda^{-(2g+4)}\sum_{\substack{\{x_{1},x_{2},x_{3}\}\in R_{n}}}(x_{1}x_{2}x_{3})^{-\alpha}\sum_{x_{4}\leq x_{1}}x_{4}^{-\alpha}\Big(\sum_{x+y=x_{1}+x_{4}}(xy)^{-\alpha}\Big)^{g}
\\
&\ll \lambda^{-(2g+4)}\sum_{\substack{\{x_{1},x_{2},x_{3}\}\in R_{n}}}(x_{1}x_{2}x_{3})^{-\alpha}\sum_{x_{4}\leq x_{1}}x_{4}^{-\alpha}(x_{1}+x_{4})^{(1-2\alpha)g}
\\
&\ll \lambda^{-(2g+4)}\sum_{\substack{\{x_{1},x_{2},x_{3}\}\in R_{n}}}(x_{2}x_{3})^{-\alpha}x_{1}^{(1-2\alpha)(g+1)},
\end{align*}
where in the third step we used the constraint $x_{4}\leq x_{1}$. It seems opportune to observe in view of (\ref{alpha}) that
\begin{equation}\label{esti} (1-2\alpha)(g+1)= -\alpha,\ \ \ \ \ \ \ \ \ \ \ \ g+1-(2g+1)\alpha=0,\end{equation} 
whence the preceding discussion then permits one to deduce that
$$\mathbb{E}(\lvert T_{n}^{ind}(A)\rvert)\ll \lambda^{-(2g+4)}\sum_{\substack{ \{x_{1},x_{2},x_{3}\}\in R_{n}}}(x_{1}x_{2}x_{3})^{-\alpha}\ll \lambda\log n,$$ wherein we employed (\ref{ela}).

We foreshadow as above that the underlying congruence conditions satisfied by the appertaining variables alleviate the computations required to derive the second bound in (\ref{als}), it being worth introducing first the sets
\begin{equation*}\label{joo}T_{n,1}=\Big\{\bfx\in\mathbb{N}^{2g+3}:\ \{x_{1},x_{2},x_{3}\}\in R_{n},\ \ x_{1}+x_{2}=x_{5}+x_{6}=\ldots=x_{2g+3}+x_{2g+4}\Big\},\end{equation*} 
\begin{equation*}\label{joo}T_{n,2}=\Big\{\bfx\in\mathbb{N}^{2g+3}: \{x_{1},x_{2},x_{3}\}\in R_{n},\ x_{1}+x_{4}=x_{5}+x_{2}=\ldots=x_{2g+3}+x_{2g+4},\ x_{2},x_{4}\leq x_{1}\Big\},\end{equation*} 
\begin{equation*}\label{joo}T_{n,3}=\Big\{\bfx\in\mathbb{N}^{2g+3}:\ \{x_{1},x_{2},x_{3}\}\in R_{n},\ \ x_{1}+x_{4}=x_{5}+x_{6}=\ldots=2x_{2g+3},\ \ x_{4}\leq x_{1}\Big\},\end{equation*} 
\begin{equation*}\label{joo}T_{n,4}=\Big\{\bfx\in\mathbb{N}^{2g+3}:\ \{x_{1},x_{2},x_{3}\}\in R_{n},\ \ 2x_{1}=x_{5}+x_{6}=\ldots=x_{2g+3}+x_{2g+4}\Big\}\end{equation*} and to define
$$T_{n,i}(A)= \Big\{\bfx\in T_{n,i}:\ \ \ \text{Set}(\bfx)\subset A\Big\}$$ for a given sequence $A\subset \mathbb{N}$. We split then the corresponding sum into cases, namely
 $$\mathbb{E}(\lvert T_{n}^{dep}(A)\rvert)\ll \sum_{i=1}^{4}\mathbb{E}(\lvert T_{n,i}(A)\rvert).$$

It may be opportune to clarify that the summand corresponding to $i=2$ gives account of the instances in which $\lvert\{x_{2},x_{3}\}\cap \big\{x_{2j},\ \ \ 3\leq j\leq g+2\big\}\rvert=1$, the cases $x_{2}=x_{2j}$ for $j>3$ or $x_{3}=x_{2j}$ for $j\geq 3$ being completely symmetrical to that presented in the pertaining sum. We have also considered the cases in which either $x_{2}=x_{4}$ or $x_{3}=x_{4}$, the latter instances having been encompassed in the term $i=1$. The summand corresponding to $i=3$ comprises the cases $x_{2j+1}=x_{2j+2}$ for $j\geq 2,$ the instance $x_{1}=x_{4}$ being considered in the summand $i=4$. An insightful inspection of the description of $T_{n}$ in (\ref{AAAAA}) with special attention to both (\ref{kkk}) and the underlying congruence relations (\ref{modu}) reveals that the above cases constitute the only instances in which two or more of the cognate variables are equal, for if \begin{equation}\label{gui}\lvert\{x_{2},x_{3}\}\cap \big\{x_{2i+1},\ \ \ 2\leq i\leq g+1\big\}\rvert=1,\end{equation} say $x_{2}=x_{2i_{0}+1}$, then $x_{1}\equiv x_{2i_{0}+1} \mmod{M}$ but $x_{1}\not\equiv x_{2}\mmod{M}$, which is a contradiction. By a similar reason, if $x_{2}=x_{2i}$ and $x_{3}=x_{2j}$ for $i\neq j$ then $x_{2}\equiv x_{2i}\equiv x_{2j}\equiv x_{3} \mmod{M}$, which would again contradict our assumptions.

We shift our attention to the term $T_{n,1}$ and proceed by applying Lemma \ref{lem1.1} to get
\begin{equation*}\mathbb{E}(\lvert T_{n,1}(A)\rvert)\ll \sum_{\substack{\{x_{1},x_{2},x_{3}\}\in R_{n}}}(x_{1}x_{2}x_{3})^{-\alpha}(x_{1}+x_{2})^{(1-2\alpha)g}\ll_{\lambda}n^{(1-2\alpha)g}\log n,\end{equation*} where in the last step we utilised the fact that $x_{1}+x_{2}\gg n$ and (\ref{ela}). For the term $T_{n,2}$ we observe as is customary that
$$\mathbb{E}(\lvert T_{n,2}(A)\rvert)\ll \sum_{\substack{\{x_{1},x_{2},x_{3}\}\in R_{n}\\x_{1}>x_{2}}}(x_{1}x_{2}x_{3})^{-\alpha}\sum_{\substack{x_{4}\leq x_{1}\\ x_{1}+x_{4}=x_{5}+x_{2}}}(x_{4}x_{5})^{-\alpha}\Bigg(\sum_{x+y=x_{1}+x_{4}}(xy)^{-\alpha}\Bigg)^{g-1}.$$ An application of Lemma \ref{lem1.1} then delivers
\begin{align*}\mathbb{E}(\lvert T_{n,2}(A)\rvert)&\ll\sum_{\substack{ \{x_{1},x_{2},x_{3}\}\in R_{n}\\ x_{1}>x_{2}}}(x_{1}x_{2}x_{3})^{-\alpha}\sum_{\substack{x_{4}\leq x_{1}}}x_{4}^{-\alpha}(x_{4}+x_{1}-x_{2})^{-\alpha}(x_{1}+x_{4})^{(1-2\alpha)(g-1)}
\\
&\ll \sum_{\substack{\{x_{1},x_{2},x_{3}\}\in R_{n}\\ x_{1}>x_{2}}}(x_{2}x_{3})^{-\alpha}(x_{1}-x_{2})^{-\alpha}x_{1}^{(1-2\alpha)g}.
\end{align*}
We observe in view of the condition $x_{1}>x_{2}$ that $x_{1}\asymp n$ and note by (\ref{varita}) that then
\begin{align*}\mathbb{E}(\lvert T_{n,2}(A)\rvert)&\ll n^{g-(2g+2+\varepsilon)\alpha}\sum_{\substack{\{x_{1},x_{2},x_{3}\}\in R_{n}\\ x_{2}\asymp g_{\lambda}(n)}}1+n^{g-(2g+1+\varepsilon)\alpha}\sum_{\substack{x_{3}\asymp g_{\lambda(n)}\\ 2x_{2}<n-x_{3}}}(n-x_{3}-2x_{2})^{-\alpha}
\\
&\ll (\log n)^{B}n^{(g+1)(1-2\alpha)+\varepsilon(1-\alpha)}\ll n^{1-2\alpha},
\end{align*}
as desired. We shall focus our attention next on the perusal of $T_{n,3}$ and note that it follows in a routinary manner that
\begin{align*}\mathbb{E}(\lvert T_{n,3}(A)\rvert)&\ll \sum_{\substack{\{x_{1},x_{2},x_{3}\}\in R_{n}}}(x_{1}x_{2}x_{3})^{-\alpha}\sum_{\substack{x_{1}+x_{4}=2x_{2g+3}\\ x_{4}\leq x_{1}}}(x_{4}x_{2g+3})^{-\alpha}(x_{1}+x_{4})^{(1-2\alpha)(g-1)}
\\
&\ll \sum_{\substack{\{x_{1},x_{2},x_{3}\}\in R_{n}}}(x_{1}x_{2}x_{3})^{-\alpha}\sum_{x_{4}\leq x_{1}}x_{4}^{-\alpha}(x_{1}+x_{4})^{(1-2\alpha)(g-1)-\alpha}
\\
&\ll \sum_{\substack{\{x_{1},x_{2},x_{3}\}\in R_{n}}}x_{1}^{(1-2\alpha)g-\alpha}(x_{2}x_{3})^{-\alpha}\ll n^{(1-2\alpha)g\varepsilon}\sum_{\substack{\{x_{1},x_{2},x_{3}\}\in R_{n}}}(x_{1}x_{2}x_{3})^{-\alpha},
\end{align*} whence it transpires after an application of (\ref{ela}) that then \begin{equation*}\mathbb{E}(\lvert T_{n,3}(A)\rvert)\ll_{\lambda} n^{(1-2\alpha)g\varepsilon}(\log n).\end{equation*}
We shall conclude the proof by employing as is customary Lemma \ref{lem1.1} to derive
\begin{align*}\mathbb{E}(\lvert T_{n,4}(A)\rvert)&\ll \sum_{\substack{\{x_{1},x_{2},x_{3}\}\in R_{n}}}(x_{1}x_{2}x_{3})^{-\alpha}\Bigg(\sum_{x+y=2x_{1}}(xy)^{-\alpha}\Bigg)^{g}\nonumber
\\
&\ll \sum_{\substack{\{x_{1},x_{2},x_{3}\}\in R_{n}}}(x_{2}x_{3})^{-\alpha}x_{1}^{(1-2\alpha)g-\alpha}\ll_{\lambda} n^{(1-2\alpha)g\varepsilon}(\log n),
\end{align*}
the last estimate stemming from the above treatment. The preceding bounds and remarks then enable one to complete the proof of the lemma.
\end{proof}

\section{Auxiliary estimates for conditional expectations}\label{sec10}
The present section shall be devoted to provide a prolix analysis examining some of the conditional expectations that arise in the discussion concerning the upper tail of $\lvert T_{n}(A)\rvert$. For such purposes, some notation is required. We define for each fixed $\bfr\subset \mathbb{N}\cap [1,n]$ of cardinality $1\leq \lvert \bfr\rvert\leq 2g+4$ the sets \begin{equation*}\label{Tnr} T^{0}_{n,\bfr}=\Big\{\bfx\in T_{n}:\ \ \ \ \ \ \ \ \ \ \{x_{1},x_{2},x_{3}\}\cap\bfr=\o,\ \ \ \ \ \ \ \ \ \bfr\subset \text{Set}(\bfx)\ \Big\},\end{equation*}
\begin{equation*}\label{Tnr1}T^{1}_{n,\bfr}= \Big\{\bfx\in T_{n}:\ \ \ \ \ \ \ \ 1\leq \lvert\{x_{1},x_{2},x_{3}\}\cap\bfr\lvert\leq 2,\ \ \ \ \ \ \bfr\subset \text{Set}(\bfx)\ \Big\},\end{equation*}
\begin{equation}\label{Tnre3}T^{3}_{n,\bfr}= \Big\{\bfx\in T_{n}:\ \ \ \ \ x_{j}\in\bfr\ \ \ \ \text{   for every $1\leq j\leq 3$},\ \ \bfr\subset \text{Set}(\bfx)\ \Big\},\end{equation} and when $i\in\{0,1,3\}$ for given $A\subset \mathbb{N}$ the corresponding counterparts $$T^{i}_{n,\bfr}(A)=\Big\{\bfx\in T^{i}_{n,\bfr}:\ \ \ \ \ \text{Set}(\bfx)\subset A\ \Big\}.$$  
\begin{lem}\label{lem7.1}
For fixed $\mathcal{R}\subset \mathbb{N}\cap [1,n]$ with $1\leq \lvert\bfr\rvert\leq 2g+3$ one has that
\begin{equation}\label{alasa}\mathbb{E}\big(\lvert T^{i}_{n,\bfr}(A)\rvert \ \big |\bfr\subset A\big)\ll n^{-\delta_{g}},\ \ \ \ \ \ \ \ i=0,1\end{equation} wherein $\delta_{g}>0$ is a fixed constant which does not depend on $\bfr$.
\end{lem}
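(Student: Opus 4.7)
The plan is to adapt the computation of the unconditional expectations carried out in Lemma \ref{lem7} to the conditional setting, while tracking the polynomial savings that arise from forcing the fixed set $\bfr$ to occupy prescribed coordinates of the tuple $\bfx$. By independence of the events $\{y\in A\}$ for distinct $y\in\mathbb{N}$ one first writes
$$\mathbb{E}\bigl(\lvert T^{i}_{n,\bfr}(A)\rvert\,\bigm|\,\bfr\subset A\bigr)=\sum_{\bfx\in T^{i}_{n,\bfr}}\prod_{y\in \text{Set}(\bfx)\setminus\bfr}\mathbb{P}(y\in A),$$
so the task reduces to bounding this expression uniformly in $\bfr$.

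Next, I would decompose the outer sum according to the finitely many combinatorial patterns describing how the elements of $\bfr$ are assigned to coordinates of $\bfx$. For $T^{0}_{n,\bfr}$ such elements lie entirely among the tail positions $x_{4},\ldots,x_{2g+4}$, whereas for $T^{1}_{n,\bfr}$ precisely one or two of them land in $\{x_{1},x_{2},x_{3}\}$; in either situation the number of admissible patterns is bounded by a function of $g$ alone. Within each pattern I would execute the same sequence of applications of Lemma \ref{lem1.1} already employed in the proof of Lemma \ref{lem7}, exploiting the identities (\ref{varita}) and (\ref{esti}) to telescope the $g$ pair-sums $x_{2k+1}+x_{2k+2}=s$ successively. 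Forcing a coordinate $x_{j}$ to equal a fixed $r\in\bfr$ has the effect of replacing a summation of size $\asymp n^{1-\alpha}$ (if $j\in\{2,3\}$) or $\asymp g_{\lambda}(n)^{1-\alpha}$ (otherwise) by the single probability factor $\mathbb{P}(r\in A)\asymp r^{-\alpha}$, and a direct comparison shows that the resulting ratio is at most $\asymp n^{-\varepsilon(1-\alpha)}$ up to logarithmic factors, uniformly in the admissible range of $r$. Since $\bfr$ is non-empty, at least one such collapse occurs and one obtains an estimate of the shape $n^{-\delta_{g}}$ with $\delta_{g}>0$ depending only on $g$.

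The main obstacle is ensuring uniformity over all placements when several elements of $\bfr$ interact nontrivially with the structure of $T_{n}$, for instance when two elements of $\bfr$ occupy both coordinates of a single pair $x_{2k+1}+x_{2k+2}$ and thereby pin down the common value $s$ (and in turn $x_{4}$), or when $\bfr$ produces coincidences that would otherwise conflict with the congruence conditions (\ref{modu}) or the distinctness constraints (\ref{kkk}). Here the Sidon property of $C_{M}$ together with the case analysis developed for $T_{n}^{dep}$ in the proof of Lemma \ref{lem7} preclude further accidental coincidences, so that each admissible placement still contributes a genuine polynomial saving. Minimising over the finitely many patterns and cardinalities $1\leq\lvert\bfr\rvert\leq 2g+3$ then yields the desired uniform estimate.
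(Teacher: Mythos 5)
Your overall strategy is the same as the paper's: by independence the conditional expectation equals $\sum_{\bfx\in T^{i}_{n,\bfr}}\prod_{y\in \text{Set}(\bfx)\setminus\bfr}\mathbb{P}(y\in A)$, one splits according to the boundedly many placements of the elements of $\bfr$ among the coordinates of $\bfx$, estimates each configuration by iterating Lemma \ref{lem1.1} with the identities (\ref{varita}) and (\ref{esti}), and uses the congruence structure (\ref{modu}), (\ref{kkk}) of the Sidon set $C_{M}$ to rule out the degenerate coincidences; this is exactly the content of the five-case decomposition (\ref{pelele}) for $i=0$ and of the estimate (\ref{kkp}) for $i=1$.

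However, the quantitative mechanism you give for the key step is not correct as stated, and it is the load-bearing claim. Conditioning on $\bfr\subset A$ removes the factor $\mathbb{P}(r\in A)$ altogether (as your own first display shows), so a collapsed coordinate is not ``replaced by the single probability factor $r^{-\alpha}$''; retaining that factor overstates the gain. More importantly, the asserted uniform ratio $\asymp n^{-\varepsilon(1-\alpha)}$ per collapse fails: for $i=0$ with one element of $\bfr$ occupying a pair slot, say $x_{6}=r$, the partner $x_{5}=x_{1}+x_{4}-r$ is determined and its weight $(x_{1}+x_{4}-r)^{-\alpha}$ can be of order $1$, so there is no pointwise gain at all; the saving only appears after summing jointly over $x_{1}$ and $x_{4}$ via the difference form of Lemma \ref{lem1.1}, and what comes out (as in the first summand of (\ref{pelele})) is of the shape $n^{\varepsilon(1-2\alpha)}$ up to logarithms, i.e.\ an exponent strictly smaller than your claimed $\varepsilon(1-\alpha)$. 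Similarly, for $i=1$ a collapse in the tail yields nothing beyond the $O(1)$ bounds for the inner sums; the power saving there has to be extracted from the pinned triple sum over $\{x_{1},x_{2},x_{3}\}\in R_{n}$, with three different exponents according to whether the pinned coordinate is of size $\asymp g_{\lambda}(n)$, of size $\asymp n$, or two coordinates are pinned, exactly as in (\ref{kkp}). So the ``direct comparison'' sentence must be replaced by the case-by-case computations you defer to; since you do propose to run the Lemma \ref{lem1.1} machinery of Lemma \ref{lem7} and to invoke the congruence argument of (\ref{gui}) for the excluded placements (such as $\{r_{4},r_{6}\}=\{x_{2},x_{3}\}$), the plan can be completed, but only with case-dependent exponents $\delta_{g}$ rather than the single uniform saving you assert.
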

\begin{proof}
We shall begin by analysing $\mathbb{E}\big(\lvert T^{0}_{n,\bfr}(A)\rvert \ \big|\bfr\subset A\big)$, and observe that then
\begin{align}\label{pelele}n^{2\alpha+\varepsilon\alpha}\mathbb{E}&\big(\lvert T^{0}_{n,\bfr}(A)\rvert \ \big|\bfr\subset A\big)\ll \mathop{{\sum_{\substack{\{x_{1},x_{2},x_{3}\}\in R_{n}\\ x_{1}+x_{4}=x_{5}+r_{6}\\ x_{1}>r_{6}}}}^*}(x_{4}x_{5})^{-\alpha}+\mathop{{\sum_{\substack{\{x_{1},x_{2},x_{3}\}\in R_{n}\\ x_{1}+r_{4}=x_{5}+x_{6}}}}^*}(x_{5}x_{6})^{-\alpha}\nonumber
\\
&+\mathop{{\sum_{\substack{\{x_{1},x_{2},x_{3}\}\in R_{n}\\ x_{1}+r_{4}=x_{5}+r_{6}\\ \{r_{4},r_{6}\}\neq \{x_{2},x_{3}\}}}}^*}x_{5}^{-\alpha}+\mathop{{\sum_{\substack{\{x_{1},x_{2},x_{3}\}\in R_{n}\\ x_{1}+x_{4}=r_{5}+r_{6}\\ r_{5}\in \bfr}}}^*}x_{4}^{-\alpha}+\mathop{{\sum_{\substack{\{x_{1},x_{2},x_{3}\}\in R_{n}\\ x_{1}+r_{4}=r_{5}+r_{6}\\ \{x_{2},x_{3}\}\not\subset \{r_{4},r_{5},r_{6}\}}}}^*}1,
\end{align}
where in the above lines $r_{4}\in\bfr\cup \{x_{1},x_{2},x_{3}\}$ and $r_{j}\in\bfr\cup \{x_{2},x_{3}\}$ for $5\leq j\leq 6$ and where we employed the fact that the implicit inner sums corresponding to the rest of the underlying pairs of the shape $(x_{2j+1},x_{2j+2})$ for $3\leq j\leq g+1$ in (\ref{jood}) are $O(1)$ by Lemma \ref{lem1.1} regardless of whether $\bfr \cap \{x_{2j+1},x_{2j+2}\}$ is non-trivial or empty. The restriction in the last three summands stems from the discussion in (\ref{gui}).  We may as well assume that $x_{5}\neq x_{6}$ since the equality $x_{2j+1}=x_{2j+2}$ can only hold for at most one $2\leq j\leq g+1$. We then use Lemma \ref{lem1.1} to deduce that
\begin{align*}\sum_{\substack{\{x_{1},x_{2},x_{3}\}\in R_{n}\\ x_{1}+x_{4}=x_{5}+r_{6}\\ x_{1}>r_{6}\\ r_{6}\in\bfr\cup \{x_{2},x_{3}\}}}(x_{4}x_{5})^{-\alpha}\ll& \sum_{\substack{\{x_{1},x_{2},x_{3}\}\in R_{n}\\ x_{1}>r_{6}\\r_{6}\in\bfr\cup \{x_{2},x_{3}\}}}(x_{1}-r_{6})^{1-2\alpha}\ll \sum_{\substack{x_{2}\asymp n\\ r_{6}<x_{1}\leq g_{\lambda(n)}\\ r_{6}\in\bfr}}(x_{1}-r_{6})^{1-2\alpha}\nonumber
\\
&+\sum_{\substack{r_{6}<x_{1}\asymp n\\ x_{2}\leq g_{\lambda(n)}\\ r_{6}\in\bfr\cup \{x_{2},x_{3}\}}}(x_{1}-r_{6})^{1-2\alpha}\ll (\log n)^{B}(n^{(2-2\alpha)\varepsilon+1}+n^{2-2\alpha+\varepsilon}).
\end{align*}
We shall as well utilise Lemma \ref{lem1.1} in conjunction with (\ref{ela}) to obtain
\begin{align*}n^{-2\alpha-\varepsilon\alpha}\mathop{{\sum_{\substack{\{x_{1},x_{2},x_{3}\}\in R_{n}\\ x_{1}+r_{4}=x_{5}+x_{6}}}}^*}(x_{5}x_{6})^{-\alpha}\ll& n^{-2\alpha-\varepsilon\alpha}\sum_{\substack{\{x_{1},x_{2},x_{3}\}\in R_{n}\\ r_{4}\in \bfr\cup\{x_{1},x_{2},x_{3}\}}}(x_{1}+r_{4})^{1-2\alpha}\ll_{\lambda} n^{(1-2\alpha)\varepsilon}(\log n).
\end{align*}

The third summand in (\ref{pelele}) may be estimated in an similar way as the first one, namely
\begin{align*}\mathop{{\sum_{\substack{\{x_{1},x_{2},x_{3}\}\in R_{n}\\ x_{1}+r_{4}=x_{5}+r_{6}\\ \{r_{4},r_{6}\}\neq \{x_{2},x_{3}\}}}}^*}x_{5}^{-\alpha}&\ll \mathop{{\sum_{\substack{\{x_{1},x_{2},x_{3}\}\in R_{n}\\ \ x_{1}>r_{6}-r_{4}\\ \{r_{4},r_{6}\}\neq \{x_{2},x_{3}\}}}}^*}(x_{1}+r_{4}-r_{6})^{-\alpha}\ll_{\lambda} (\log n)^{B}(n^{(1-\alpha)\varepsilon+1}+n^{1-\alpha+\varepsilon}),
\end{align*} where in the above line $r_{4}\in \bfr\cup\{x_{1},x_{2},x_{3}\}$ and $r_{6}\in \bfr\cup\{x_{2},x_{3}\}$ and if $r_{4}=x_{1}$ then $r_{6}\in\bfr$ by the argument in (\ref{gui}). It seems worth clarifying that the above linear expression in brackets can always be written after possibly using the identity $x_{1}+x_{2}+x_{3}=n$ as a linear expression involving only one variable in $\{x_{1},x_{2},x_{3}\}$ or two of them, say $x_{1},x_{2},$ with $x_{2}\leq g_{\lambda}(n)$, the third one satisfying $x_{3}\asymp n$ and being determined once the others are fixed. For the fourth summand,
\begin{align*}\mathop{{\sum_{\substack{\{x_{1},x_{2},x_{3}\}\in R_{n}\\ x_{1}+x_{4}=r_{5}+r_{6}\\ r_{5}\in \bfr}}}^*}x_{4}^{-\alpha}&\ll \mathop{{\sum_{\substack{\{x_{1},x_{2},x_{3}\}\in R_{n}\\ x_{1}<r_{5}+r_{6}\\ r_{5}\in \bfr}}}^*}(r_{5}+r_{6}-x_{1})^{-\alpha}\ll_{\lambda} (\log n)^{B}(n^{(1-\alpha)\varepsilon+1}+n^{1-\alpha+\varepsilon}),
\end{align*} where $r_{6}\in \bfr\cup\{x_{2},x_{3}\}$ and we employed a similar argument as above. Lastly,
$$n^{-2\alpha-\varepsilon\alpha}\mathop{{\sum_{\substack{\{x_{1},x_{2},x_{3}\}\in R_{n}\\ x_{1}+r_{4}=r_{5}+r_{6}\\ \{x_{2},x_{3}\}\not\subset \{r_{4},r_{5},r_{6}\}}}}^*}1\ll n^{-2\alpha-\varepsilon\alpha}\mathop{{\sum_{\substack{x_{1}+x_{3}=n-x_{2}\\ x_{1}+r_{4}=r_{5}+r_{6}\\  \{x_{2},x_{3}\}\not\subset \{r_{4},r_{5},r_{6}\}}}}^*}1\ll n^{1-2\alpha-\varepsilon\alpha},$$ wherein we used the fact under the above constraints that in the recalcitrant instance $r_{4}=x_{3}$ or $ r_{4}=x_{2}$ then $r_{5},r_{6}\in\bfr,$ and hence one would be able to express $x_{2}$ or $x_{3}$ respectively only in terms of $\bfr$. By the preceding discussion 
$$\mathbb{E}\big(\lvert T^{0}_{n,\bfr}(A)\rvert\ \big|\bfr\subset A\big)\ll n^{-\delta_{g}}.$$ 

We next focus our attention on the case $i=1$, recall (\ref{feo}) and observe that
\begin{align}\label{kkp}\sum_{\substack{\{x_{1},x_{2},x_{3}\}\in R_{n}\\ 1\leq\lvert\{x_{1},x_{2},x_{3}\}\cap\bfr\lvert\leq 2}}\mathbb{P}&\big(x_{1},x_{2},x_{3}\in A| \bfr\subset A\big)\ll \sum_{\substack{x_{2}+x_{3}=n-r_{1}\\ r_{1}\asymp g_{\lambda}(n)\\ r_{1}\in\bfr}}(x_{2}x_{3})^{-\alpha}+\sum_{\substack{\{r_{1},x_{2},x_{3}\}\in R_{n}\\ r_{1}\asymp n\\ r_{1}\in\bfr}}(x_{2}x_{3})^{-\alpha}\nonumber
\\
&+\sum_{\substack{\{r_{1},r_{2},x_{3}\}\in R_{n}\\ r_{1},r_{2}\in\bfr}}x_{3}^{-\alpha}\ll_{\lambda} n^{1-2\alpha}+n^{-\alpha+\varepsilon(1-\alpha)}\log n+n^{-\alpha\varepsilon}\ll n^{-\delta_{g}}.\end{align}
We then split the corresponding computation into several parts, namely
\begin{align*}\mathbb{E}\big(\lvert T^{1}_{n,\bfr}(A)\rvert\ \big |\bfr\subset A\big)\ll &\sum_{\substack{\{x_{1},x_{2},x_{3}\}\in R_{n}\\ 1\leq\lvert\{x_{1},x_{2},x_{3}\}\cap\bfr\lvert\leq 2}}\mathbb{P}\big(x_{1},x_{2},x_{3}\in A| \bfr\subset A\big)\Bigg(\sum_{\substack{x_{1}+x_{4}=x_{5}+r_{6}\\ r_{6}\in\bfr\cup \{x_{2},x_{3}\}}}(x_{4}x_{5})^{-\alpha}\nonumber
\\
&+\sum_{\substack{ x_{1}+x_{4}=\ldots=x_{2g+3}+x_{2g+4}\\ x_{4}\leq x_{1}}}\prod_{j=4}^{2g+4}x_{j}^{-\alpha}+\sum_{\substack{x_{1}+r_{4}=x_{5}+x_{6}\\ r_{4}\in\bfr\cup \{x_{1},x_{2},x_{3}\}}}(x_{5}x_{6})^{-\alpha}\nonumber
\\
&+\sum_{\substack{x_{1}+x_{4}=r_{5}+r_{6}\\ r_{5},r_{6}\in\bfr\cup \{x_{2},x_{3}\}}}x_{4}^{-\alpha}+\sum_{\substack{x_{1}+r_{4}=r_{5}+x_{6}\\ r_{4},r_{5}\in\bfr\cup \{x_{1},x_{2},x_{3}\}}}x_{6}^{-\alpha}+\sum_{\substack{x_{1}+r_{4}=r_{5}+r_{6}\\ r_{j}\in\bfr\cup \{x_{1},x_{2},x_{3}\}\\ 4\leq j\leq 6}}1\Bigg).
\end{align*}
One may avoid encompassing the instance in which $x_{5}=x_{6}$ in the same manner as was detailed after (\ref{pelele}). We employ first Lemma \ref{lem1.1} and (\ref{esti}) to deduce that the second summand is bounded above by a constant times
\begin{align*}\sum_{\substack{\{x_{1},x_{2},x_{3}\}\in R_{n}\\ 1\leq\lvert\{x_{1},x_{2},x_{3}\}\cap\bfr\lvert\leq 2}}\mathbb{P}&\big(x_{1},x_{2},x_{3}\in A| \bfr\subset A\big)\sum_{\substack{ x_{4}\leq x_{1}}}x_{4}^{-\alpha}(x_{1}+x_{4})^{(1-2\alpha)g}
\\
&\ll \sum_{\substack{\{x_{1},x_{2},x_{3}\}\in R_{n}\\ 1\leq\lvert\{x_{1},x_{2},x_{3}\}\cap\bfr\lvert\leq 2}}\mathbb{P}\big(x_{1},x_{2},x_{3}\in A| \bfr\subset A\big)\ll_{\lambda}n^{-\delta_{g}},
\end{align*}
where in the last step we employed (\ref{kkp}). The rest of the summands are bounded either trivially or via Lemma \ref{lem1.1} by a constant times $$\sum_{\substack{\{x_{1},x_{2},x_{3}\}\in R_{n}\\ 1\leq\lvert\{x_{1},x_{2},x_{3}\}\cap\bfr\lvert\leq 2}}\mathbb{P}\big(x_{1},x_{2},x_{3}\in A| \bfr\subset A\big),$$ whence (\ref{kkp}) combined with the preceding estimates yields (\ref{alasa}) for the instance $i=1$. 
\end{proof}

The conditional expectation of the random variable defined in (\ref{Tnre3}) when $i=3$ on the contrary does not exhibit a power saving, the ensuing extensive analysis articulated hereafter having its genesis on such a fact. It also seems worth introducing for $n\in\mathbb{N}$ and $\bfr\subset \mathbb{N}\cap [1,n]$ the set
\begin{equation*}T^{4'}_{n,\bfr}= \bigcup_{k_{0}=4}^{2g+4}\Big\{\bfx\in T^{3}_{n,\bfr}:\ x_{k_{0}}\in\bfr,\ \ \ \ \ x_{2j+1},x_{2j+2}\notin \bfr,\ \ \text{for some $2\leq j\leq g+1$} \Big\},\end{equation*} defining
\begin{equation*}T^{4''}_{n,\bfr}= \Big\{\bfx\in T^{3}_{n,\bfr}:\  x_{2j+1}=x_{2j+2}\notin \bfr,\ \ \text{for some $2\leq j\leq g+1$} \Big\},\ \ \  \end{equation*} and $$T^{4}_{n,\bfr}=T^{4'}_{n,\bfr}\cup T^{4''}_{n,\bfr}\ \ \ \ \ \ \ \ \ \ \ \ \ \ T^{5}_{n,\bfr}=T^{3}_{n,\bfr}\setminus T^{4}_{n,\bfr},$$
 and for $A\subset\mathbb{N}$ considering the random variables \begin{equation}\label{Tnr3n}T^{i}_{n,\bfr}(A)=\Big\{\bfx\in  T^{4}_{n,\bfr}:\ \ \text{Set}(\bfx)\subset A\Big\}\ \ \ \ \ \ \ i=4,5.\end{equation}It may be worth observing that if $\bfx\in T^{5}_{n,\bfr}$ then either $\lvert\bfr\rvert=3$ or $g+2\leq \lvert\bfr\rvert\leq 2g+4,$ the latter situation entailing that either $x_{2l}\in\bfr$ or $x_{2l-1}\in\bfr$ for each $3\leq l\leq g+2$. The case $\lvert \bfr\rvert=g+2$ corresponds to the instance $\bfx\in T_{n}^{dep}$, it being profitable recalling the argument in (\ref{gui}). If $\lvert \bfr\rvert=3$ and $\bfx\in T^{5}_{n,\bfr}$ then the same argument implies that $\bfr=\{x_{1},x_{2},x_{3}\}$ and $\bfr\cap\text{Set}(x_{4},\ldots,x_{2g+4})=\o.$
\begin{lem}\label{lem7.2}
For fixed $\bfr\subset \mathbb{N}\cap [1,n]$ satisfying $3\leq \lvert \bfr\rvert\leq 2g+4$ one has that
\begin{equation*}\label{alas}\mathbb{E}\big(\lvert T^{4}_{n,\bfr}(A)\rvert \ \big|\bfr\subset A\big)\ll n^{-\delta_{g}},\ \ \ \ \ \ \ \ \mathbb{E}\big(\lvert T^{5}_{n,\bfr}(A)\rvert \ \big|\bfr\subset A\big)\ll 1 \end{equation*}for some $\delta_{g}>0$, wherein the implicit constant does not depend on $\bfr$.
\end{lem}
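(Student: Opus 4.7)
The argument exploits a structural dichotomy already encoded by the definitions: tuples in $T^{4}_{n,\bfr}$ retain a ``free'' pair---either $(x_{2j_{0}+1},x_{2j_{0}+2})$ lying entirely outside $\bfr$ in the $T^{4'}_{n,\bfr}$ regime, or a diagonal coincidence $x_{2j_{0}+1}=x_{2j_{0}+2}\notin\bfr$ in the $T^{4''}_{n,\bfr}$ regime---that can be summed over to manufacture a power saving, whereas tuples in $T^{5}_{n,\bfr}$ are rigid, with every one of the $g$ pairs having at least one endpoint prescribed by $\bfr$.

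I would dispose first of $T^{5}_{n,\bfr}$. By the observation immediately preceding the lemma statement only two scenarios arise. When $\lvert\bfr\rvert=3$ we must have $\bfr=\{x_{1},x_{2},x_{3}\}$ and $\text{Set}(x_{4},\ldots,x_{2g+4})\cap\bfr=\emptyset$; invoking independence, (\ref{probas}) and Lemma \ref{lem1.1} yields
\begin{equation*}
\mathbb{E}\big(\lvert T^{5}_{n,\bfr}(A)\rvert \ \big|\ \bfr\subset A\big)\ll_{\lambda}\sum_{x_{4}\leq x_{1}}x_{4}^{-\alpha}(x_{1}+x_{4})^{(1-2\alpha)g}\ll x_{1}^{g+1-(2g+1)\alpha}\ll 1
\end{equation*}
by the identity (\ref{esti}), in direct analogy with the $T^{ind}_{n}(A)$ computation of Lemma \ref{lem7}. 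When instead $g+2\leq\lvert\bfr\rvert\leq 2g+4$, each pair $(x_{2j+1},x_{2j+2})$ has at least one endpoint in $\bfr$, so once $x_{4}$ is chosen (itself either in $\bfr$ or pinned by a single linear equation) every remaining coordinate is forced by one linear relation; the number of admissible tuples is $O(1)$ and, since each probability is bounded by $1$, the second bound follows.

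For $T^{4}_{n,\bfr}$ I would stratify according to the index $j_{0}\in\{2,\ldots,g+1\}$ distinguishing the free pair (and, in the $T^{4'}_{n,\bfr}$ case, the index $k_{0}\geq 4$ witnessing that some $x_{k_{0}}\in\bfr$). In the $T^{4'}_{n,\bfr}$ regime Lemma \ref{lem1.1} applied to the sum over the free pair produces a factor $\ll(x_{1}+x_{4})^{1-2\alpha}$; in the $T^{4''}_{n,\bfr}$ regime the diagonal coincidence contributes $((x_{1}+x_{4})/2)^{-\alpha}$. The remaining $g-1$ pairs fall into the same menu of subsums encountered in Lemma \ref{lem7.1}: each either has both endpoints in $\bfr$ (contributing $O(1)$) or exactly one endpoint in $\bfr$ (contributing a single linear sum of size $O((x_{1}+x_{4})^{-\alpha})$ via Lemma \ref{lem1.1}). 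Inserting these bounds, summing over $x_{4}\leq x_{1}$ together with whichever of $x_{1},x_{2},x_{3}$ are not themselves in $\bfr$, and appealing to the identity (\ref{esti}) to collapse the accumulated $x_{1}$-powers, leaves one uncancelled factor of shape $(x_{1}+x_{4})^{1-2\alpha}$ or $x_{1}^{-\alpha}$; this yields a strictly negative power of $n$, and summing over the $O(g)$ admissible pairs $(j_{0},k_{0})$ produces the required $n^{-\delta_{g}}$ bound.

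The main obstacle will be the combinatorial bookkeeping in the $T^{4'}_{n,\bfr}$ analysis when $x_{4}\in\bfr$, since then the variable $x_{4}$ cannot be freely summed and one degree of freedom relied upon in the schematic above is removed. In that subcase one must ensure that the free pair still delivers the genuine power saving, which requires invoking the congruence conditions (\ref{modu}) in the spirit of the argument following (\ref{gui}) to exclude incompatible coincidences between $x_{4}$ and the other coordinates of the tuple. The identity $g+1-(2g+1)\alpha=0$ from (\ref{esti}) is pivotal throughout: it is precisely the algebraic reason why, after stripping off the power-saving factor, the residual sum collapses to a constant rather than to a slowly growing quantity.
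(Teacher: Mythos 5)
Your overall route coincides with the paper's (split off the $\lvert\bfr\rvert=3$ configuration, use the free or diagonal pair in $T^{4}_{n,\bfr}$ to win a factor $x_{1}^{1-2\alpha}\ll n^{\varepsilon(1-2\alpha)}$ via Lemma \ref{lem1.1} and (\ref{esti})), but there is a genuine gap in your treatment of $T^{5}_{n,\bfr}$ when $g+2\leq\lvert\bfr\rvert\leq 2g+4$. You assert that $x_{4}$ is ``either in $\bfr$ or pinned by a single linear equation'', that consequently the number of admissible tuples is $O(1)$, and that bounding each probability by $1$ then gives the $O(1)$ estimate. The pinning claim fails: if $x_{4}\notin\bfr$ and no pair $(x_{2j+1},x_{2j+2})$ has \emph{both} endpoints in $\bfr$ (for instance $\bfr=\{x_{1},x_{2},x_{3},x_{5},x_{7},\ldots,x_{2g+3}\}$, so $\lvert\bfr\rvert=g+3$, which lies in the admissible range), then every equation $x_{1}+x_{4}=x_{2j+1}+x_{2j+2}$ contains, besides $x_{4}$, the unknown non-$\bfr$ endpoint of that pair, so nothing determines $x_{4}$: it is a free parameter with $x_{4}\leq x_{1}\asymp g_{\lambda}(n)\gg n^{\varepsilon}$. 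The number of admissible tuples in this subcase is a positive power of $n$, and bounding the probabilities by $1$ gives a bound of that size, not $O(1)$; even keeping only the weight $x_{4}^{-\alpha}$ still yields $\asymp x_{1}^{1-\alpha}$, which is unbounded. The repair is to retain the weight of one forced partner as well: writing the relevant pair as $x_{1}+x_{4}=x_{5}+r_{6}$ with $r_{6}\in\bfr$, $x_{5}\notin\bfr$, the conditional expectation of this subcase is $\ll\sum x_{4}^{-\alpha}x_{5}^{-\alpha}$ over $x_{5}-x_{4}=x_{1}-r_{6}$, which the difference form of Lemma \ref{lem1.1} bounds by $\lvert x_{1}-r_{6}\rvert^{1-2\alpha}\ll1$ (and by $\sum x_{4}^{-2\alpha}\ll1$ when $r_{6}=x_{1}$). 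This is precisely the third term the paper isolates in its decomposition of $\mathbb{E}(\lvert T^{5}_{n,\bfr}(A)\rvert\mid\bfr\subset A)$, and without it your $T^{5}$ bound does not go through.

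Two smaller inaccuracies in the $T^{4}_{n,\bfr}$ part: by (\ref{Tnre3}) all three of $x_{1},x_{2},x_{3}$ lie in $\bfr$ for every tuple under consideration, so there is nothing to sum over ``whichever of $x_{1},x_{2},x_{3}$ are not in $\bfr$''; and when $x_{4}\notin\bfr$ the pair containing the prescribed coordinate $x_{k_{0}}\in\bfr$ must contribute its partner's weight $(x_{1}+x_{4}-r)^{-\alpha}$ (not merely $O(1)$, and not quite $(x_{1}+x_{4})^{-\alpha}$), fed into the $x_{4}$-sum through Lemma \ref{lem1.1}; discarding it and summing $x_{4}\leq x_{1}$ freely would produce $x_{1}^{2-3\alpha}$, a \emph{positive} power of $x_{1}$ for $g\geq2$. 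Relatedly, the difficult bookkeeping is the case $x_{4}\notin\bfr$, not $x_{4}\in\bfr$ as you suggest at the end: when $x_{4}\in\bfr$ the free or diagonal pair immediately yields $x_{1}^{1-2\alpha}\ll n^{\varepsilon(1-2\alpha)}$ with nothing left to sum.
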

\begin{proof}
We shall distinguish in the analysis concerning $T^{5}_{n,\bfr}(A)$ as above between the instance when $x_{i}\in\bfr$ for some $i\geq 4$ and when that property does not hold, namely
\begin{align}\label{pelelee25}\mathbb{E}\big(\lvert T^{5}_{n,\bfr}(A)\rvert\ \big|\bfr\subset A\big)\ll &1+\sum_{\substack{\{r_{1},r_{2},r_{3}\}\in R_{n}\\ r_{1}+x_{4}=\ldots=x_{2g+3}+x_{2g+4}\\ x_{4}\leq r_{1}}}\prod_{i=4}^{2g+4}x_{i}^{-\alpha}+\sum_{\substack{\{r_{1},r_{2},r_{3}\}\in R_{n}\\ r_{1}+x_{4}=x_{5}+r_{6}\\ r_{6}\in\bfr}}(x_{4}x_{5})^{-\alpha},
\end{align}
where $r_{j}\in\bfr$ for $1\leq j\leq 3$ and the first summand encompasses the instance in which all the variables are uniquely determined by $\bfr$. We estimate the second one via Lemma \ref{lem1.1} to obtain
$$\sum_{\substack{\{r_{1},r_{2},r_{3}\}\in R_{n}\\ r_{1}+x_{4}=\ldots=x_{2g+3}+x_{2g+4}\\ x_{4}\leq r_{1}}}\prod_{j=4}^{2g+4}x_{j}^{-\alpha}\ll \sum_{x_{4}\leq r_{1}}(r_{1}+x_{4})^{(1-2\alpha)g}x_{4}^{-\alpha}\ll r_{1}^{(1-2\alpha)g}\sum_{x_{4}\leq r_{1}}x_{4}^{-\alpha}\ll 1.$$ The third one in (\ref{pelelee25}) is bounded via a routine application of Lemma \ref{lem1.1}. In contrast,
\begin{align*} \mathbb{E}\big(\lvert T^{4}_{n,\bfr}(A)\rvert \ \big|\bfr\subset A\big)\ll&  \sum_{\substack{\{r_{1},r_{2},r_{3}\}\in R_{n}\\ r_{1}+x_{4}=r_{5}+x_{6}=x_{2g+3}+x_{2g+4}\\ x_{4}\leq r_{1}}}(x_{4}x_{6}x_{2g+3}x_{2g+4})^{-\alpha}+\sum_{\substack{\{r_{1},r_{2},r_{3}\}\in R_{n}\\ r_{1}+r_{4}=x_{5}+x_{6}}}(x_{5}x_{6})^{-\alpha}
\\
&+r_{1}^{-\alpha}+ \sum_{\substack{\{r_{1},r_{2},r_{3}\}\in R_{n}\\ x_{4}\leq r_{1}}}x_{4}^{-\alpha}\sum_{r_{1}+x_{4}=2x_{2g+4}}x_{2g+4}^{-\alpha},
\end{align*}
wherein $r_{j}\in\bfr$ for $1\leq j\leq 5$. A routinary application of Lemma \ref{lem1.1} then delivers
\begin{align*} \mathbb{E}\big(\lvert T^{4}_{n,\bfr}(A)\rvert \ \big|\bfr\subset A\big)\ll& r_{1}^{(1-2\alpha)}\big(1+ (r_{1}-r_{5})^{1-2\alpha}\big)\ll n^{(1-2\alpha)\varepsilon}.                              
\end{align*}

\end{proof}

\section{Random variables comprising double sums}\label{sec11}
As foreshadowed above, estimates for the upper tail of $\lvert T_{n}(A)\rvert$ in the present memoir have their reliance on the computation of $\mathbb{E}(\lvert T_{n}(A)\rvert^{m})$ for some sufficiently large $m\in\mathbb{N}$, the conclusion stemming from Lemma \ref{lem7} lending credibility to the expectation that such a moment may be of order $(\log n)^{m}$. When expanding the preceding moment appropiately, expressing it as the sum over $m$ tuples and sorting the sums according to the intersection of the set of elements of the tuples, it will transpire that for a fixed position such an intersection may be as large as $O(m^{2g+4}),$ whence a naive approach employing the preceding bound would lead to an estimate no better than $$\mathbb{E}(\lvert T_{n}(A)\rvert^{m})\ll m^{(2g+4)m},$$ which when taking $m\sim \log n$ as we will leads to an undesirably large estimate. The analysis in the preceding section enabled one to deduce that the contribution of most of the individual sums is sufficiently small, the end of the present one being to derive an analogous conclusion for most of the pairs of sums running over tuples having non-empty intersections. It seems worth defining beforehand for fixed $r_{1}\gg n^{\varepsilon}$ the set
\begin{equation}\label{Vn}V_{r_{1}}=\Big\{(x_{4},x_{5},\ldots,x_{2g+4})\in\mathbb{N}^{2g+1}:\ \ r_{1}+x_{4}=\ldots=x_{2g+3}+x_{2g+4},\ \ x_{i}\leq r_{1}\Big\},\end{equation}
where in the above line the components of $(r_{1},x_{4},x_{5},\ldots,x_{2g+4})$ are pairwise disjoint, and introduce for further convenience and $\bfr\subset\mathbb{N}\cap [1,n]$ with $0\leq \lvert \bfr \rvert\leq 2g+3$ the random variable
\begin{equation}\label{tama}X_{n,\bfr}(A)=\sum_{\substack{\bfx\in T_{n}^{ind}\\ \text{Set}(\bfx)\cap \bfr=\o}}\leavevmode\hbox{$1\!\rm I$}_{A}(\bfx)\Bigg(\sum_{\substack{\o\neq \overline{\bfx}\subset\bfx\\ }}\lvert T_{n,\bfr\cup \overline{\bfx}}^{5}(A)\rvert-\leavevmode\hbox{$1\!\rm I$}_{\o}(\bfr)\Big(1+\sum_{l=1}^{3}\sum_{\substack{\bfy\in V_{x_{l}}\\ \text{Set}(\bfx)\cap\text{Set}(\bfy)=\o}}\leavevmode\hbox{$1\!\rm I$}_{A}(\bfy)\Big)\Bigg),\end{equation}
wherein $\leavevmode\hbox{$1\!\rm I$}_{\o}(\bfr)=1$ if $\bfr=\o$ and $0$ otherwise, the function $\leavevmode\hbox{$1\!\rm I$}_{A}(\bfx)=1$ whenever $\text{Set}(\bfx)\subset A$ and $0$ else, and $x_{l}$ denotes the $l$-th component of the vector $\bfx$. We deem it convenient computing its conditional expectation, it being profitable presenting first a technical lemma that shall be employed throughout this section. We also introduce for $1\leq k\leq 3$, given $\bfx \in T_{n}^{ind}$ and fixed vector $\boldsymbol{\varepsilon}=(\varepsilon_{1},\varepsilon_{2},\varepsilon_{3})\in\{-1,1\}^{3}$ the set
$$\mathcal{T}_{\bfx,\bfr,\boldsymbol{\varepsilon}}=\Big\{\bfy\in\mathbb{N}^{k}:\ \  y_{j}\in\text{Set}(\bfx)\cup\bfr,\ \ 1\leq j\leq k,\ \ \text{Set}(\bfx)\cap\text{Set}(\bfy)\neq \o,\ \ \ y_{i}=y_{j}\Rightarrow \varepsilon_{i}=\varepsilon_{j}\Big\}.$$
\begin{lem}\label{lem32}
Let $n\in\mathbb{N}$ and $\bfr\subset \mathbb{N}\cap [1,n]$ be a fixed subset. Take a constant $\beta>0$, let $1\leq k\leq 3$, take $\bfy=(y_{1},\ldots,y_{k})\in\mathbb{N}^{k}$ and consider the function $F(\bfy)=\sum_{j=1}^{k}\varepsilon_{j}y_{j}.$ Then there is some constant $\delta_{\beta}>0$ for which
$$\sum_{\substack{\bfx\in T_{n}^{ind}\\ \text{Set}(\bfx)\cap \bfr=\o}}\prod_{i=1}^{2g+4}x_{i}^{-\alpha}\sum_{\substack{\bfy\in\mathcal{T}_{\bfx,\bfr,\boldsymbol{\varepsilon}}\\ F(\bfy)\geq 1}}F(\bfy)^{-\beta}\ll n^{-\delta_{\beta}}.$$ Likewise, one has for fixed $r_{1}\gg n^{\varepsilon}$ the bound
\begin{equation}\label{lasi}\sum_{\substack{\bfx\in V_{r_{1}}\\ \text{Set}(\bfx)\cap \bfr=\o}}\prod_{i=4}^{2g+4}x_{i}^{-\alpha}\sum_{\substack{ \bfy\in\mathcal{T}_{\bfx,\bfr,\boldsymbol{\varepsilon}}\\ F(\bfy)\geq 1}}F(\bfy)^{-\beta}\ll n^{-\delta_{\beta}}.\end{equation} 
\end{lem}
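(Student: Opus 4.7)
The strategy is to discretise the inner sum over $\bfy$ into finitely many labellings, verify that in each case $F(\bfy)$ reduces to a nonzero integer linear form in some $x_{i}$'s plus a constant, and then extract a power saving via an iterative application of Lemma \ref{lem1.1} in the spirit of the proofs of Lemmata \ref{lem7}, \ref{lem7.1} and \ref{lem7.2}.

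Since $k\leq 3$ and $\lvert\bfr\rvert\leq 2g+3$, the set $\mathcal{T}_{\bfx,\bfr,\boldsymbol{\varepsilon}}$ has cardinality $O_{g}(1)$ for every $\bfx$. One encodes each admissible $\bfy$ by a labelling $(\ell_{1},\ldots,\ell_{k})$, where each $\ell_{j}$ is either an index in $\{1,\ldots,2g+4\}$ (meaning $y_{j}=x_{\ell_{j}}$) or a fixed element of $\bfr$ (meaning $y_{j}\in\bfr$). In $T_{n}^{ind}$ (respectively $V_{r_{1}}$) the components of $\bfx$ are pairwise distinct and disjoint from $\bfr$, so the compatibility clause $y_{i}=y_{j}\Rightarrow\varepsilon_{i}=\varepsilon_{j}$ reduces to the requirement that coincident labels carry coincident signs. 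The hypothesis $\text{Set}(\bfx)\cap\text{Set}(\bfy)\neq\emptyset$ forces at least one $\ell_{j}$ to be an index, whence $F(\bfy)=L(x_{\ell_{i_{1}}},\ldots,x_{\ell_{i_{s}}})+c$ where $L$ is a nonzero integer linear form in $s\geq 1$ of the $x_{i}$'s and $c$ depends only on $\bfr$ and $\boldsymbol{\varepsilon}$.

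By the triangle inequality it suffices to treat each labelling individually, so the task becomes to bound
\begin{equation*}
S=\sum_{\substack{\bfx\in T_{n}^{ind}\\ \text{Set}(\bfx)\cap\bfr=\emptyset\\ L(\bfx)+c\geq 1}}\prod_{i=1}^{2g+4}x_{i}^{-\alpha}\,(L(\bfx)+c)^{-\beta}
\end{equation*}
and its analogue with $\bfx\in V_{r_{1}}$. I would unfold the sum over the pairs $(x_{2k+1},x_{2k+2})$ for $2\leq k\leq g+1$, first treating those \emph{free} pairs whose components do not appear in $L$: each such pair yields the standard factor $(x_{1}+x_{4})^{1-2\alpha}$ via Lemma \ref{lem1.1}. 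The at most two pairs whose components do appear in $L$ are handled by a variant of Lemma \ref{lem1.1} of the form
\begin{equation*}
\sum_{\substack{x+y=N\\ y\geq t+1}}x^{-\alpha}y^{-\alpha}(y-t)^{-\beta}\ll N^{1-2\alpha}(N-t)^{-\beta},\qquad 0\leq t<N,
\end{equation*}
obtained by dyadic decomposition at $x=N/2$ and absorption of $(L+c)^{-\beta}$ into the integrand. After all pair summations we are reduced to a sum over $\{x_{1},x_{2},x_{3}\}\in R_{n}$ and $x_{4}\leq x_{1}$ decorated by a residual $(L+c)^{-\beta}$ in at most three of those variables, of the same shape as the expressions appearing in Lemma \ref{lem7.1}.

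The power saving is then extracted by a case analysis on which variables appear in $L$: if $L$ involves $x_{2}$ or $x_{3}$ (both of size $\asymp n$) then $L+c\gg n$ on the bulk and one gains a factor $n^{-\beta}$; if $L$ only involves $x_{1}$ or $x_{4},\ldots,x_{2g+4}$ (all of size $O(g_{\lambda}(n))$), the lower bound $g_{\lambda}(n)\gg n^{\varepsilon}$ together with the positivity requirement $L+c\geq 1$ supplies a saving of at least $n^{-\varepsilon\beta/2}$ after the remaining integrations. In every case $S\ll n^{-\delta_{\beta}}$ for some $\delta_{\beta}=c_{g}\varepsilon\beta>0$, and summing over the $O_{g}(1)$ labellings yields the first bound. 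The principal obstacle will lie in the second bound (\ref{lasi}), where $\bfx\in V_{r_{1}}$: there the unweighted sum $\sum\prod_{i\geq 4}x_{i}^{-\alpha}$ is merely $O(1)$ but $r_{1}$ may be as large as $n$, so every saving must originate from $(L+c)^{-\beta}$. The delicate configurations are those in which $L+c$ could become small on a long dyadic range; combining the size constraints $x_{i}\leq r_{1}$ with $L+c\geq 1$ and the dyadic decomposition underlying Lemma \ref{lem1.1} should confine these recalcitrant contributions to a set yielding a saving of magnitude $r_{1}^{-\beta'}\ll n^{-\varepsilon\beta'}$, as desired.
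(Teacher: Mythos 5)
There is a genuine gap, in two places. First, the auxiliary inequality you propose as a ``variant of Lemma \ref{lem1.1}'', namely $\sum_{x+y=N,\ y\geq t+1}x^{-\alpha}y^{-\alpha}(y-t)^{-\beta}\ll N^{1-2\alpha}(N-t)^{-\beta}$, is false precisely in the regime the lemma is used in. Taking $t$ small compared with $N$, the terms with $y$ close to $t$ already contribute $\gg (N-t)^{-\alpha}\asymp N^{-\alpha}$ (e.g.\ the single term $y=t+1$), whereas your right-hand side is $\asymp N^{1-2\alpha-\beta}$; since $\alpha>1/2$ and the applications in Proposition \ref{propo7} require $\beta=\alpha$ (weights of the shape $(x_{j}+r_{4}-r_{5})^{-\alpha}$), one has $\alpha+\beta>1$ and the claimed bound is exceeded by a factor $N^{\alpha+\beta-1}$. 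Second, and more fundamentally, your concluding case analysis never addresses the range where $F(\bfy)$ is small, which is where all the difficulty lies. ``$L+c\gg n$ on the bulk'' fails for forms such as $x_{2}-x_{3}$ or $x_{2}-r$ with $r\in\bfr$ of size $\asymp n$, and in the small-variable case the positivity $F(\bfy)\geq 1$ by itself yields no saving whatsoever: $F$ can equal $1$ on a portion of the range whose contribution is not negligible without further input, so the asserted gain $n^{-\varepsilon\beta/2}$ is unsubstantiated. You also do not cover the configuration in which $F$ contains a full pair $x_{2l+1}+x_{2l+2}$ with equal signs; since that pair sums to $x_{1}+x_{4}$, the weight is constant along the pair summation and cannot be absorbed there at all.

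For comparison, the paper's proof runs on a different quantitative mechanism which your sketch is missing: one splits according to whether $F(\bfy)>n^{\varepsilon/10}$ or not. In the first case the weight gives $n^{-\varepsilon\beta/10}$ and the remaining sum is $O(\log n)$ by Lemma \ref{lem7}; in the second case, after the substitutions $x_{3}=n-x_{1}-x_{2}$ and $x_{2g+4}=x_{1}+x_{4}-x_{2g+3}$ (the latter dealing with the equal-sign pair case), the constraint $0<F(\bfy)\leq n^{\varepsilon/10}$ pins a single, carefully chosen variable ($x_{4}$, $x_{2}$, or $x_{2l+1}$) to an interval of length $n^{\varepsilon/10}$ that does not depend on the remaining free variables, and the resulting loss is offset by the negative exponents coming from (\ref{alpha}) and (\ref{esti}), e.g.\ $(1-2\alpha)g=-g/(2g+1)<-1/10$ and $1-2\alpha<0$; the same dichotomy, plus the observation that otherwise $F(\bfy)\asymp r_{1}\gg n^{\varepsilon}$, gives (\ref{lasi}). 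Without this large/small dichotomy, the short-interval confinement, and the explicit exponent bookkeeping, your argument does not produce the power saving; as it stands the proposal establishes only the shape of the reduction to $O_{g}(1)$ linear forms, which is the easy part of the lemma.
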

\begin{proof}
We focus our attention on proving the first estimate, begin by assuming that \begin{equation}\label{Tim}F(\bfy)\asymp (x_{4}-\tilde{z})\end{equation} for some $\tilde{z}=\tilde{z}(x_{1},\bfr)$ that only depends on $x_{1}$ and $\mathcal{R}$ and observe that
\begin{align*}\sum_{\substack{\bfx\in T_{n}^{ind}\\ F(\bfy)\asymp (x_{4}-\tilde{z})}}\prod_{i=1}^{2g+4}x_{i}^{-\alpha}F(\bfy)^{-\beta}\ll& n^{-\varepsilon\beta/10}\sum_{\substack{\bfx\in T_{n}^{ind}\\ x_{4}-\tilde{z}>n^{\varepsilon/10}}}\prod_{i=1}^{2g+4}x_{i}^{-\alpha}+\sum_{\substack{\bfx\in T_{n}^{ind}\\ x_{4}-\tilde{z}\leq n^{\varepsilon/10}}}\prod_{i=1}^{2g+4}x_{i}^{-\alpha}.
\end{align*}
We note first by an application of Lemma \ref{lem7} that
\begin{equation}\label{aspe}n^{-\varepsilon\beta/10}\sum_{\substack{\bfx\in T_{n}^{ind}\\ x_{4}-\tilde{z}>n^{\varepsilon/10}}}\prod_{i=1}^{2g+4}x_{i}^{-\alpha}\ll n^{-\varepsilon\beta/10}\mathbb{E}(\lvert T_{n}^{ind}(A)\rvert)\ll_{\lambda}n^{-\varepsilon\beta/10}\log n. \end{equation}

A routinary application of Lemma \ref{lem1.1} then delivers the bound
 \begin{align}\label{men}\sum_{\substack{\bfx\in T_{n}^{ind}\\ 0<x_{4}-\tilde{z}\leq n^{\varepsilon/10}}}&\prod_{i=1}^{2g+4}x_{i}^{-\alpha}\ll \sum_{\substack{\{x_{1},x_{2},x_{3}\}\in R_{n}}}x_{1}^{(1-2\alpha)g-\alpha}(x_{2}x_{3})^{-\alpha}\sum_{\substack{x_{4}\leq x_{1}\\ 0<x_{4}-\tilde{z}\leq n^{\varepsilon/10}}}x_{4}^{-\alpha}\nonumber
\\
&\ll n^{\varepsilon(1/10+(1-2\alpha)g)}\sum_{\substack{\{x_{1},x_{2},x_{3}\}\in R_{n}}}(x_{1}x_{2}x_{3})^{-\alpha}\ll_{\lambda} n^{\varepsilon(1/10+(1-2\alpha)g)}\log n,
\end{align}
wherein we employed (\ref{ela}). If instead $F(\bfy)\asymp x_{1}+g(\bfr)$ for some $g(\bfr)$ only depending on $\bfr$ then a customary application of Lemma \ref{lem1.1} and equations (\ref{ela}) and (\ref{esti}) would entail
 \begin{align}\label{pasti}\sum_{\substack{\bfx\in T_{n}^{ind}\\ 0<x_{1}+g(\bfr)\leq n^{\varepsilon/10}}}&\prod_{i=1}^{2g+4}x_{i}^{-\alpha}\ll \sum_{\substack{\{x_{1},x_{2},x_{3}\}\in R_{n}\\ 0<x_{1}+g(\bfr)\leq n^{\varepsilon/10}}}(x_{1}x_{2}x_{3})^{-\alpha}\ll_{\lambda} (\log n)n^{\varepsilon(1/10-\alpha)+1-2\alpha},
\end{align} which in combination with an analogous estimate to the one in (\ref{aspe}) would deliver the desired bound. It therefore transpires that if $y_{j}\in\{x_{1},x_{4}\}\cup\bfr$ for $1\leq j\leq k\leq 3$ then one could either express $F(\bfy)$ as in (\ref{Tim}) or right before (\ref{pasti}), whence the preceding discussion would deliver the required estimate.

For the remaining cases we may assume that $F(\bfy)\neq x_{1}+x_{2}+x_{3}=n,$ the latter case being derived after an application of Lemma \ref{lem7}, and observe that by making the substitution $x_{3}=n-x_{1}-x_{2}$ and denoting $\tilde{\bfx}=(x_{4},\ldots,x_{2g+4})$ one may write $$F(\bfy)=\delta_{1}x_{1}+\delta_{2}x_{2}+g(\bfr,\tilde{\bfx})$$ for some integers $-2\leq\delta_{1},\delta_{2}\leq 2$ and a function $g(\bfr,\tilde{\bfx})$. When $\delta_{2}\neq 0$ then $F(\bfy)\asymp (x_{2}-\tilde{x})$, wherein $\tilde{x}<x_{2}$ does not involve $x_{2}$ or $x_{3}$ on its expression. Such an observation then entails
\begin{align*}\label{apa}\sum_{\substack{\bfx\in T_{n}^{ind}\\ F(\bfy)\asymp (x_{2}-\tilde{x})}}\prod_{i=1}^{2g+4}x_{i}^{-\alpha}F(\bfy)^{-\beta}\ll &n^{-\varepsilon\beta/10}\sum_{\substack{\bfx\in T_{n}^{ind}\\ x_{2}-\tilde{x}>n^{\varepsilon/10}}}\prod_{i=1}^{2g+4}x_{i}^{-\alpha}+ \sum_{\substack{\bfx\in T_{n}^{ind}\\ 0<x_{2}-\tilde{x}\leq n^{\varepsilon/10}}}\prod_{i=1}^{2g+4}x_{i}^{-\alpha}.
\end{align*}
We estimate the first term as in (\ref{aspe}), and for the second one
\begin{align*}\sum_{\substack{\bfx\in T_{n}^{ind}\\ 0<x_{2}-\tilde{x}\leq n^{\varepsilon/10}}}\prod_{i=1}^{2g+4}x_{i}^{-\alpha}&\ll \sum_{\substack{x_{1}+x_{4}=\ldots=x_{2g+3}+x_{2g+4}\\ x_{4}<x_{1}<n}}\prod_{i=4}^{2g+4}x_{i}^{-\alpha}\sum_{\substack{ x_{3}=n-x_{1}-x_{2}\\ 0<x_{2}-\tilde{x}\leq n^{\varepsilon/10}}}(x_{1}x_{2}x_{3})^{-\alpha}
\\
&\ll n^{\varepsilon(1/10-\alpha)-2\alpha}\sum_{\substack{x_{1}+x_{4}=\ldots=x_{2g+3}+x_{2g+4}\\ x_{4}<x_{1}<n}}\prod_{i=4}^{2g+4}x_{i}^{-\alpha}\ll_{\lambda} n^{\varepsilon(1/10-\alpha)+1-2\alpha},
\end{align*}
where in the last step we employed Lemma \ref{lem1.1} and crutially utilised the fact that the interval for $x_{2}$ does not depend on $x_{3}$. 

If $\delta_{2}=0$ and $x_{2l+1}=y_{j_{0}}$ for some $1\leq j_{0}\leq k$ and $2\leq l\leq g+1$ (and similarly with even indexes) with the property that $y_{j}\neq x_{2l+2}$ for all $1\leq j\leq k$ then one may write $F(\bfy)\asymp(x_{2l+1}-\tilde{y})$, wherein $\tilde{y}$ does not involve $x_{2l+1}$ or $x_{2l+2}$ on its expression and obtain
\begin{align*}\sum_{\substack{\bfx\in T_{n}^{ind}\\ F(\bfy)\asymp(x_{2l+1}-\tilde{y})}}(x_{1}\cdots x_{2g+4})^{-\alpha}F(\bfy)^{-\beta}\ll_{\lambda} n^{-\varepsilon\beta/10}\log n+\sum_{\substack{\bfx\in T_{n}^{ind}\\ 0<x_{2l+1}-\tilde{y}\leq n^{\varepsilon/10}}}\prod_{j=1}^{2g+4}x_{j}^{-\alpha}.
\end{align*}
We assume for simplicity and without loss of generality that $l=g+1$ and get the estimate \begin{align*}\sum_{\substack{\bfx\in T_{n}^{ind}\\ 0<x_{2g+3}-\tilde{y}\leq n^{\varepsilon/10}}}&\prod_{i=1}^{2g+4}x_{i}^{-\alpha}\ll \sum_{\substack{\{x_{1},x_{2},x_{3}\}\in R_{n}\\ x_{1}+x_{4}=\ldots=x_{2g+1}+x_{2g+2}\\ x_{4}\leq x_{1}}}\prod_{i=1}^{2g+2}x_{i}^{-\alpha}\sum_{\substack{x_{2g+4}=x_{1}+x_{4}-x_{2g+3}\\ 0<x_{2g+3}-\tilde{y}\leq n^{\varepsilon/10}}}(x_{2g+3}x_{2g+4})^{-\alpha}
\\
&\ll n^{\varepsilon/10}\sum_{\substack{\{x_{1},x_{2},x_{3}\}\in R_{n}\\ x_{1}+x_{4}=\ldots=x_{2g+1}+x_{2g+2}\\ x_{4}\leq x_{1}}}x_{1}^{-2\alpha}\prod_{i=2}^{2g+2}x_{i}^{-\alpha}
\\
&\ll n^{\varepsilon/10}\sum_{\substack{\{x_{1},x_{2},x_{3}\}\in R_{n}}}x_{1}^{(1-2\alpha)g}(x_{1}x_{2}x_{3})^{-\alpha}\ll _{\lambda}n^{\varepsilon(1/10+(1-2\alpha)g)}\log n,
\end{align*}
wherein we employed the fact that $x_{2g+3}x_{2g+4}\gg x_{1}$ in the second step and the fact that $\tilde{y}$ does not depend on $x_{2g+4}$, Lemma \ref{lem1.1} in the third one and (\ref{ela}) in the fourth one, as desired. We observe in view of (\ref{alpha}) that $(1-2\alpha)g=-g/(2g+1)$ and hence the above exponent is negative. We also note that an analogous procedure to the preceding one would deliver the estimate (\ref{lasi}).

If $\delta_{2}=0$ and instead $g(\bfr,\tilde{\bfx})$ is not in the previously described situation and contains both $x_{2g+3}$ and $x_{2g+4}$ as summands then either they have a different coefficient, in which case the substitution $x_{2g+4}=x_{1}+x_{4}-x_{2g+3}$ would reduce it to the previously discussed situation, or they have the same one. In the latter instance then 
$$F(\bfy)=\delta_{1}x_{1}+\xi(x_{2g+3}+x_{2g+4})+\delta_{4}x_{4}+\delta_{\bfr}r,\ \ \ \ \ \ \ \ \ \ r\in\bfr,$$ where $\xi\in\{-1,1\}$ and $\delta_{1},\delta_{4},\delta_{\bfr}\in\{-1,0,1\}$ with the property that at least two of these coefficients are $0$. By making the aforementioned substitution it would transpire that \begin{equation*}F(\bfy)=\gamma_{1}x_{1}+\gamma_{4}x_{4}+\gamma_{\bfr}r\end{equation*} for some integers $-2\leq\gamma_{1},\gamma_{4},\gamma_{\bfr}\leq 2$ satisfying that either $\gamma_{4}\neq 0$ or $\gamma_{4}=0$ but $\gamma_{1}\neq 0$ and $\gamma_{\bfr}=0$. The first case would correspond to the instance for which (\ref{Tim}) holds, whence (\ref{men}) would deliver the desired estimate. The second one would follow via (\ref{pasti}).  The estimate (\ref{lasi}) would be derived in an analogous manner by making the additional observation in view of the above analysis that either the relation (\ref{Tim}) holds or $F(\bfy)\asymp r_{1}\gg n^{\varepsilon}.$ The preceding discussion completes the proof.
\end{proof}

We next define for further convenience and $3\leq h\leq 4$, fixed $\mathcal{R}\subset\mathbb{N}$ and vectors of the shape $\bfx=(x_{1},\ldots,x_{k})\in\mathbb{N}^{k}$ for $1\leq k\leq 2g+4$ the sets
\begin{equation}\label{rac1}\mathcal{C}^{h}_{\bfr,\bfx}=\Big\{\{y_{1},\ldots,y_{h}\}\subset\text{Set}(\bfx)\cup \bfr,\ \ \ \ y_{h}\in \text{Set}(\bfx)\setminus \bfr \ \ \Big\}\end{equation} and
$$\mathcal{C}_{\bfr,\bfx}=\Big\{\{y_{1},y_{2},y_{3},y_{4}\}\in \mathcal{C}^{4}_{\bfr,\bfx}:\ \ \ \ \ y_{1}+y_{2}=y_{3}+y_{4},\ \  \ \ \{y_{1},y_{2}\}\neq\{y_{3},y_{4}\}\Big\}.$$ We also introduce for $\bfx=(x_{1},\ldots, x_{2g+4})\in\mathbb{N}^{2g+4}$ the collection of sets
\begin{equation}\label{sx}\mathcal{S}_{\bfx}=\Big\{\{x_{1},x_{4},x_{2l-1},x_{2l}\},\ \ 3\leq l\leq g+2\Big\}\cup \Big\{\{x_{2j-1},x_{2j},x_{2l-1},x_{2l}\},\ \ 3\leq l<j\leq g+2\Big\},\end{equation} and for $\tilde{\bfx}=(x_{4},\ldots,x_{2g+4})\in\mathbb{N}^{2g+1}$ and $r_{1}\gg n^{\varepsilon}$ the set
\begin{equation}\label{sxs}\mathcal{S}_{\tilde{\bfx},r_{1}}=\Big\{\{r_{1},x_{4},x_{2l-1},x_{2l}\},\ \ 3\leq l\leq g+2\Big\}\cup \Big\{\{x_{2j-1},x_{2j},x_{2l-1},x_{2l}\},\ \ 3\leq l<j\leq g+2\Big\}.\end{equation}
The upcoming auxiliary technical lemma shall be employed henceforth, its proof having its reliance on the same circle of ideas.
\begin{lem}\label{lem7.4}
Whenever $\o\neq \mathcal{R}\subset{\mathbb{N}}\cap [1,n]$ is a fixed set and $r_{1}\in\mathbb{N}$ with $r_{1}\gg n^{\varepsilon}$ is fixed then there exists some constant $\delta_{g}>0$ for which
\begin{equation*}\sum_{\substack{\bfx\in T_{n}^{ind}\\ \{y_{1},y_{2},y_{3},y_{4}\}\in \mathcal{C}_{\bfr,\bfx}\setminus \mathcal{S}_{\bfx}}}\prod_{i=1}^{2g+4}x_{i}^{-\alpha}\ll n^{-\delta_{g}},\ \  \ \sum_{\substack{\bfx\in V_{r_{1}}\\ \{y_{1},y_{2},y_{3},y_{4}\}\in \mathcal{C}_{\bfr,\bfx}\setminus \mathcal{S}_{\bfx,r_{1}}}}\prod_{i=4}^{2g+4}x_{i}^{-\alpha}\ll n^{-\delta_{g}}.\end{equation*} 
\end{lem}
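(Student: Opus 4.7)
The plan is to translate the condition $\{y_{1},y_{2},y_{3},y_{4}\}\in\mathcal{C}_{\bfr,\bfx}\setminus\mathcal{S}_{\bfx}$ together with the Sidon relation $y_{1}+y_{2}=y_{3}+y_{4}$ into a genuinely new linear constraint on the free coordinates parametrising $T_{n}^{ind}$, and thereafter apply a dimension-reduction argument in the spirit of the proof of Lemma \ref{lem32}. I would begin by using the built-in equations $x_{1}+x_{2}+x_{3}=n$ and $x_{1}+x_{4}=x_{2l-1}+x_{2l}$ for $3\leq l\leq g+2$ to eliminate the dependent variables $x_{3},x_{6},x_{8},\ldots,x_{2g+4}$, thereby parametrising $T_{n}^{ind}$ by the $g+3$ free coordinates $x_{1},x_{2},x_{4},x_{5},x_{7},\ldots,x_{2g+3}$. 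Any 4-subset $\{y_{1},y_{2},y_{3},y_{4}\}\subset\text{Set}(\bfx)\cup\bfr$ subject to $y_{1}+y_{2}=y_{3}+y_{4}$ translates, after these substitutions, into a linear equation $L(\bfx)=c(\bfr,n)$ with integer coefficients bounded in absolute value by a universal constant. A direct inspection against the list \eqref{sx} would reveal that the homogeneous form $L$ vanishes identically precisely when $\{y_{1},y_{2},y_{3},y_{4}\}\in\mathcal{S}_{\bfx}$, whence the hypothesis of the lemma forces some free coordinate $x_{\ast}$ to appear in $L$ with non-zero coefficient, and consequently to be uniquely determined by the remaining free coordinates together with constants drawn from $\bfr$ and $n$.

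I would then perform a short case split according to the identity of $x_{\ast}$, repeating the computation underlying $\mathbb{E}(\lvert T_{n}^{ind}(A)\rvert)\ll\lambda\log n$ from Lemma \ref{lem7} but with the sum $\sum_{x_{\ast}}x_{\ast}^{-\alpha}$ replaced by the single summand dictated by $L=c(\bfr,n)$. Keeping track of which sum is thereby collapsed, as in the bookkeeping leading to \eqref{men} and \eqref{pasti}, yields a saving of order $n^{\varepsilon(1-\alpha)}$ whenever $x_{\ast}=x_{1}$ and of order $n^{1-\alpha}$ whenever $x_{\ast}$ is any of $x_{2},x_{5},x_{7},\ldots,x_{2g+3}$, both being strictly positive powers of $n$ since $1-\alpha>0$ and $\varepsilon>0$. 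The total number of combinatorial types of admissible quadruple being $O_{g}(1)$, summing over them and dividing the unconstrained bound from Lemma \ref{lem7} by this saving delivers the desired estimate $\ll n^{-\delta_{g}}$ for an appropriate $\delta_{g}=\delta_{g}(\alpha,\varepsilon)>0$.

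The second estimate, over $V_{r_{1}}$, follows via an identical argument in which the fixed $r_{1}$ plays the role of $x_{1}$, the defining equations being $r_{1}+x_{4}=x_{2l-1}+x_{2l}$ for $3\leq l\leq g+2$, and $\mathcal{S}_{\tilde{\bfx},r_{1}}$ replacing $\mathcal{S}_{\bfx}$ in the role of the trivially satisfied configurations. The hypothesis $r_{1}\gg n^{\varepsilon}$ ensures that any saving of the form $r_{1}^{1-\alpha}$ remains a positive power of $n$. The principal obstacle I anticipate is the combinatorial verification that $L$ is non-degenerate in every case not already excluded: this requires systematic bookkeeping over the possible placements of $y_{1},\ldots,y_{4}$ between $\bfr$ and the slots of $\bfx$, though the definitions \eqref{sx} and \eqref{sxs} have been framed precisely so as to exhaust the configurations in which $L$ degenerates.
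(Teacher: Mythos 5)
Your proposal is in essence the paper's own argument: the paper proves the lemma precisely by checking, case by case according to which slots of $\bfx$ the elements $y_{1},\ldots,y_{4}$ occupy, that outside $\mathcal{S}_{\bfx}$ (resp.\ $\mathcal{S}_{\tilde{\bfx},r_{1}}$) the relation $y_{1}+y_{2}=y_{3}+y_{4}$, combined with $x_{1}+x_{2}+x_{3}=n$ and $x_{1}+x_{4}=\ldots=x_{2g+3}+x_{2g+4}$, pins down one coordinate in terms of the others, after which the corresponding summation collapses and Lemma \ref{lem1.1} together with (\ref{ela}) yields a power saving. So the non-degeneracy check you defer as ``the principal obstacle'' is not a side issue but the entire content of the proof; it does hold (the only identically satisfied pairings are the pair-against-pair ones, whose underlying $4$-sets are exactly those listed in (\ref{sx}) and (\ref{sxs})), but asserting it is not proving it. Two slips in your bookkeeping: you omit $x_{4}$ from the coordinates that may be the determined one, and the claimed saving of order $n^{1-\alpha}$ is correct only for $x_{2}$, since the coordinates $x_{4},x_{5},x_{7},\ldots,x_{2g+3}$ are all at most $x_{1}\ll n^{\varepsilon}(\log n)^{O(1)}$, so collapsing one of those sums buys only a factor of roughly $x_{1}^{1-\alpha}\ll n^{\varepsilon(1-\alpha)+o(1)}$ (compare the paper's bound $(x_{2l-1}x_{2l})^{-\alpha}\ll x_{1}^{-\alpha}$ for the collapsed factor). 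Since any fixed positive power of $n$ suffices for the statement, neither slip damages the conclusion, and your handling of the $V_{r_{1}}$ sum with $r_{1}$ playing the role of $x_{1}$ matches the paper's treatment.
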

\begin{proof}

We begin by examining the first sum and assume that $\{y_{1},y_{2},y_{3},y_{4}\}\subset\bfr\cup\{x_{1},x_{4}\}$, an ensuing consequence being that either $x_{4}$ can be expressed in terms of $x_{1}$ and $\bfr$ or $x_{1}=f_{\bfr}$ is determined by $\bfr$. Both of the contributions would then be bounded above via an application of Lemma \ref{lem1.1} combined with (\ref{ela}) by
\begin{equation}\label{mario} \sum_{\substack{\{x_{1},x_{2},x_{3}\}\in R_{n}\\ }}x_{1}^{(1-2\alpha)g-\alpha}(x_{2}x_{3})^{-\alpha}+\sum_{\substack{\{x_{1},x_{2},x_{3}\}\in R_{n}\\ \text{$x_{1}=f_{\bfr}$}}}(x_{1}x_{2}x_{3})^{-\alpha}
\ll_{\lambda} (n^{(1-2\alpha)g\varepsilon}+n^{-\varepsilon})\log n,
\end{equation}
as desired. 

We assume without loss of generality that $y_{1}+y_{2}=y_{3}+y_{4}$ and suppose that $x_{2}\in \{y_{1},y_{2},y_{3},y_{4}\}$, say $x_{2}=y_{4}$, the instance for which $x_{3}\in \{y_{1},y_{2},y_{3},y_{4}\}$ being symmetrical. If $y_{3}\neq x_{3}$ then once $x_{1},x_{4},\ldots,x_{2g+4}$ are fixed, the variables $x_{2}$ and $x_{3}$ are completely determined. Such an observation enables one to bound the contribution corresponding to that case via a routine application of Lemma \ref{lem1.1} and (\ref{esti}), namely
\begin{align*}\sum_{\substack{\bfx\in T_{n}^{ind}}}\prod_{i=1}^{2g+4}x_{i}^{-\alpha}\sum_{\substack{\{y_{1},y_{2},y_{3},x_{2}\}\in\mathcal{C}_{\bfr,\bfx}\\  x_{3}\neq y_{3}}}1&\ll n^{-2\alpha-\varepsilon\alpha}\sum_{\substack{\bfx\in V_{x_{1}}\\ x_{4}<x_{1}<n}}\prod_{i=4}^{2g+4}x_{i}^{-\alpha}\ll n^{-\varepsilon \alpha+1-2\alpha}.
\end{align*}

Similarly, we suppose that $x_{2l+1}\in \{y_{1},y_{2},y_{3},y_{4}\}$ for some $2\leq l\leq g+1$, say $x_{2l+1}=y_{4}$ and $l=g+1$ without loss of generality. If $y_{3}\neq x_{2g+4}$ then once $x_{1},\ldots, x_{2g+2}$ are fixed, $x_{2g+3}$ and $x_{2g+4}$ are completely determined, it further being apparent that $x_{2g+3}x_{2g+4}\gg x_{1}$. We employ the preceding discussion in conjunction with (\ref{ela}) and Lemma \ref{lem1.1} to derive
\begin{align}\label{smile}\sum_{\substack{\bfx\in T_{n}^{ind}\\ \{y_{1},y_{2},y_{3},x_{2g+3}\}\in \mathcal{C}_{\bfr,\bfx}\\ x_{2g+4}\neq y_{3}}}\prod_{i=1}^{2g+4}x_{i}^{-\alpha}&\ll \sum_{\substack{\{x_{1},x_{2},x_{3}\}\in R_{n}\\ x_{4}\leq x_{1}}}x_{1}^{(1-2\alpha)(g-1)-2\alpha}(x_{2}x_{3}x_{4})^{-\alpha}\ll_{\lambda} n^{(1-2\alpha)g\varepsilon}\log n.
\end{align}
When on the contrary none of the above happens and $x_{2l+1}=y_{3}$ and $x_{2l+2}=y_{4}$ then $y_{1}+y_{2}=x_{1}+x_{4}$, and in view of the assumption $\{y_{1},y_{2},y_{3},y_{4}\}\notin\mathcal{S}_{\bfx}$ then either $\{y_{1},y_{2}\}\in\bfr$ or $\{y_{1},y_{2}\}=\{x_{2},x_{3}\}$. It then transpires that the argument leading to (\ref{mario}) combined with the identity $x_{1}+x_{2}+x_{3}=n$ would permit one to derive a similar conclusion in this instance. Likewise, if $$\{y_{1},y_{2},y_{3},y_{4}\}\cap \Big(\bigcup_{m=5}^{2g+4}\{x_{m}\}\Big)=\o$$ and $x_{2}=y_{4}$ and $x_{3}=y_{3}$ then $n-x_{1}=y_{1}+y_{2}$, wherein $y_{1},y_{2}\in\bfr\cup \{x_{1},x_{4}\}$. We estimate as well this contribution by means of the argument leading to (\ref{mario}).

For the second estimate in the statement we note first that a similar argument to the one displayed in the setting underlying (\ref{smile}) leads to the same conclusion. If one is not in the preceding context and $\{x_{2l+1},x_{2l+2}\}=\{y_{3},y_{4}\}$ then $y_{1},y_{2}\in\bfr$ as above, whence it transpires that $x_{4}$ would then be completely determined, the same conclusion holding if $\{y_{1},y_{2},y_{3},y_{4}\}\subset\bfr\cup\{r_{1},x_{4}\}$, such an observation in conjunction with analogous manoeuvres delivering the desired estimate.

\end{proof}

We shall include one last technical lemma in the spirit of those already exposed, it being worth recalling (\ref{prras}) and (\ref{rac1}), presenting for $\bfx\in\mathbb{N}^{2g+4}$ the set
\begin{equation}\label{pata}\mathcal{C}_{\bfr,\bfx}(n)=\Big\{\{y_{1},y_{2},y_{3}\}\in \mathcal{C}^{3}_{\bfr,\bfx}:\ \ \ \ \ \ \ \{y_{1},y_{2},y_{3}\}\in R_{n}\Big\}\end{equation}
and introducing for $\bfr\subset \mathbb{N}\cap [1,n]$, fixed $r_{1}\gg n^{\varepsilon}$ and $j=0,3$ the random variable
$$E_{\bfr}^{j}(n)=\sum_{\substack{\bfx\in \mathcal{T}_{j}}}P_{j}(\bfx)\sum_{\substack{\{y_{1},y_{2},y_{3}\}\in \mathcal{C}_{\bfr,\tilde{\bfx}}(n)\\ \{x_{1},x_{2},x_{3}\}\cap\{y_{1},y_{2},y_{3}\}=\o}}1,\ \   \text{wherein}\ P_{0}(\bfx)=\prod_{i=1}^{2g+4}x_{i}^{-\alpha},\ \ P_{3}(\bfx)=\prod_{i=4}^{2g+4}x_{i}^{-\alpha}$$
with  $\mathcal{T}_{0}=T_{n}^{ind}$, $\mathcal{T}_{3}=V_{r_{1}}$, the terms $x_{i},y_{i}$ for $1\leq i\leq 3$ denoting the first coordinates of $\bfx$ and $\bfy$ and \begin{equation}\label{rariti}\tilde{\bfx}=(x_{4},\ldots,x_{2g+4}).\end{equation}  

\begin{lem}\label{lem7.5} Let $\bfr\subset \mathbb{N}\cap [1,n]$. Then for some $\delta_{g}>0$ one has
 $$E_{\bfr}^{j}(n)\ll n^{-\delta_{g}}.\ \ \ \ \ \ \ \ \ \  j=0,3.$$
\end{lem}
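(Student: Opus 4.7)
I treat the case $j = 0$ in detail and indicate the essentially identical modifications for $j = 3$. The argument is a swap of summation combined with sharp crude size bounds coming from membership of $R_{n}$.

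The key structural observation is that for any $\bfx \in T_{n}^{ind}$, the proviso $x_{1}\geq x_{i}$ for $i\geq 4$ in the definition (\ref{jood}) of $T_{n}$, together with $\{x_{1},x_{2},x_{3}\}\in R_{n}$, forces $\text{Set}(\tilde{\bfx})\subset [1,x_{1}]\subset [1,g_{\lambda}(n)]$. On the other hand, any triple $\{y_{1},y_{2},y_{3}\}\in \mathcal{C}_{\bfr,\tilde{\bfx}}(n)\subset R_{n}$ has, after relabelling, two coordinates $y_{1},y_{2}\in [n/4,n]$ and a small coordinate $y_{3}\in [g_{\lambda}(n)/2, g_{\lambda}(n)]$. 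Since the large coordinates $y_{1},y_{2}$ cannot lie in $\text{Set}(\tilde{\bfx})$, they are forced into $\bfr$, while $y_{3}\in \text{Set}(\tilde{\bfx})\setminus\bfr$ by definition of $\mathcal{C}^{3}_{\bfr,\bfx}$.

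Swapping the order of summation I iterate first over pairs $\{y_{1},y_{2}\}\subset \bfr\cap [n/4,n]$ (at most $O_{g}(1)$ such), set $y_{3}:=n-y_{1}-y_{2}$, and then over $\bfx\in T_{n}^{ind}$ with $y_{3}\in\text{Set}(\tilde{\bfx})$. This yields
\begin{equation*}
E_{\bfr}^{0}(n)\ll_{g}\sum_{\{y_{1},y_{2}\}\subset\bfr}\sum_{k=4}^{2g+4}\sum_{\substack{\bfx\in T_{n}^{ind}\\ x_{k}=y_{3}}}P_{0}(\bfx),
\end{equation*}
and it suffices to show that the innermost sum is $\ll_{\lambda} n^{-\delta_{g}}$ for each fixed valid $y_{3}\asymp n^{\varepsilon}$ and each $4\leq k\leq 2g+4$. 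Fixing $x_{k}=y_{3}$ and applying Lemma \ref{lem1.1} to each remaining pair-sum $x_{2l-1}+x_{2l}=x_{1}+x_{4}$ yields a factor $\ll (x_{1}+x_{4})^{1-2\alpha}\ll x_{1}^{1-2\alpha}\ll n^{-\varepsilon/(2g+1)}$ per pair. When $k=4$ all $g$ pairs with $l\geq 3$ remain free; when $k\geq 5$ only $g-1$ do, and the additional sum over $x_{4}$ is bounded via the second assertion of Lemma \ref{lem1.1} by $\ll (x_{1}-y_{3})^{1-2\alpha}\leq 1$ (using distinctness of the entries of $\bfx\in T_{n}^{ind}$ to ensure $x_{1}-y_{3}\geq 1$). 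Combining with $y_{3}^{-\alpha}\ll n^{-\varepsilon\alpha}$ and the estimate $\sum_{\{x_{1},x_{2},x_{3}\}\in R_{n}}(x_{1}x_{2}x_{3})^{-\alpha}\ll_{\lambda}\log n$ coming from the proof of Lemma \ref{lem1.2}, the worst of the two cases (namely $k\geq 5$) is bounded by $\ll_{\lambda} n^{-2\varepsilon g/(2g+1)}\log n$, which suffices for any $\delta_{g}<2\varepsilon g/(2g+1)$.

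The case $j=3$ runs along the same template, with $T_{n}^{ind}$ replaced by $V_{r_{1}}$, $P_{0}$ by $P_{3}$, the factor $\log n$ replaced by $\sum_{\bfx\in V_{r_{1}}}P_{3}(\bfx)\ll 1$ (which follows from the identity $1-\alpha + g(1-2\alpha)=0$ coming from (\ref{alpha})), and $x_{1}$ by $r_{1}\gg n^{\varepsilon}$. If $r_{1}\ll n$ the size dichotomy underlying the initial reduction persists verbatim; if instead $r_{1}\asymp n$, triples may have two or three components in $\text{Set}(\tilde{\bfx})$, but each extra coincidence produces a fresh linear constraint on the $x_{i}$'s and the same Lemma \ref{lem1.1} bookkeeping employed in the proof of Lemma \ref{lem7.4} delivers the required saving. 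The main technical point in both cases is the $k\geq 5$ analysis, where distinctness of the coordinates is crucial to preventing $(x_{1}-y_{3})^{1-2\alpha}$ (respectively $(r_{1}-y_{3})^{1-2\alpha}$) from being singular.
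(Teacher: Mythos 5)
Your argument for $j=0$ rests on the claim that the proviso $x_{1}\geq x_{i}$ in (\ref{jood}), together with $\{x_{1},x_{2},x_{3}\}\in R_{n}$, forces $x_{1}\leq g_{\lambda}(n)$ and hence $\text{Set}(\tilde{\bfx})\subset[1,g_{\lambda}(n)]$. This is not so: in $T_{n}$ the coordinate $x_{1}$ is the element of the representation triple that participates in the pair-sum coincidences (see the construction of $A'$ in Section \ref{sec5}, where ``one of the elements, say $x_{1}$'' is the removed one), and it need not be the small coordinate of $\mathcal{B}_{n}$ in (\ref{prras}); it may be one of the two coordinates of size $\asymp n$, the inequality $x_{1}\geq x_{i}$ referring only to $4\leq i\leq 2g+4$. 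That $x_{1}\asymp n$ genuinely occurs is visible throughout the paper, e.g.\ in the treatment of $T_{n,2}$ in Lemma \ref{lem7} (where $x_{1}>x_{2}$ is used to conclude $x_{1}\asymp n$) and in the case split $r_{6}<x_{1}\leq g_{\lambda}(n)$ versus $x_{1}\asymp n$ inside the proof of Lemma \ref{lem7.1}. Consequently your key reduction — that the two large elements of any $\{y_{1},y_{2},y_{3}\}\in\mathcal{C}_{\bfr,\tilde{\bfx}}(n)$ must lie in $\bfr$, leaving $O_{g}(1)$ choices and a single small coincidence $y_{3}=x_{k}$ — fails: when $x_{1}\asymp n$ the coordinates $x_{4},\ldots,x_{2g+4}$ may themselves be of size comparable to $n$, so the triple can contain one element $x_{2l+1}$ of a pair with its partner outside, or a full pair $\{x_{2l+1},x_{2l+2}\}$ together with an element of $\bfr\cup\{x_{4}\}$. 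These contributions, which your proposal never addresses, are exactly the substance of the lemma: the saving there does not come from counting pairs inside $\bfr$ but from the fact that the coincidence determines variables (once $x_{1},\ldots,x_{2g+2}$ are fixed, $x_{2g+3},x_{2g+4}$ are determined with $x_{2g+3}x_{2g+4}\gg x_{1}$; in the full-pair case $x_{1}+x_{4}+y_{3}=n$ determines $x_{4}$ from $x_{1}$), the lost weight then being recovered via Lemma \ref{lem1.1} and (\ref{ela}). The sub-case you do handle (triple meeting $\text{Set}(\tilde{\bfx})$ only in a small element, the other two lying in $\bfr$) is the easiest part of the case analysis.

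The same problem affects $j=3$ in a milder form: you acknowledge that for $r_{1}\asymp n$ the triple may meet $\text{Set}(\tilde{\bfx})$ in two components, but you dismiss it by asserting that ``the same Lemma \ref{lem1.1} bookkeeping employed in the proof of Lemma \ref{lem7.4} delivers the required saving''. That bookkeeping, applied to the present sums, is precisely what a proof of the lemma must carry out (and is what the paper does, separately for the cases $x_{2l+1}\in\{y_{1},y_{2},y_{3}\}$ with partner outside, the full-pair case, and the case $y_{1}=x_{4}$ with $y_{2},y_{3}\in\bfr$); as written it is an appeal to an unproved analogue rather than an argument. So the proposal has a genuine gap for $j=0$ and is materially incomplete for $j=3$.
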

\begin{proof}
We observe first that whenever $x_{2l+1}\in \{y_{1},y_{2},y_{3}\}$ for some $2\leq l\leq g+1$, say $l=g+1$, and $x_{2l+2}\notin \{y_{1},y_{2},y_{3}\}$ then once $x_{1},\ldots, x_{2g+2}$ are fixed, $x_{2g+3}$ and $x_{2g+4}$ are determined and $x_{2g+3}x_{2g+4}\gg x_{1}$. Under these circumstances then it would follow after subsequent applications of Lemma \ref{lem1.1} for the case $j=0$ that
\begin{align*}E_{\bfr}^{0}(n)&\ll \sum_{\substack{\{x_{1},x_{2},x_{3}\}\in R_{n}\\ x_{1}+x_{4}=\ldots=x_{2g+1}+x_{2g+2}\\ }}x_{1}^{-2\alpha}(x_{2}x_{3}\cdots x_{2g+2})^{-\alpha}
\\
&\ll \sum_{\substack{\{x_{1},x_{2},x_{3}\}\in R_{n} }}(x_{2}x_{3})^{-\alpha}x_{1}^{(1-2\alpha)(g-1)-2\alpha}\sum_{x_{4}\leq x_{1}}x_{4}^{-\alpha}\ll_{\lambda} (\log n)n^{(1-2\alpha)g\varepsilon},
\end{align*}
where in the last step we applied (\ref{ela}). For the case $j=3$ then the corresponding contribution in the above situation would be bounded with the aid of Lemma \ref{lem1.1} by
\begin{align*}E_{\bfr}^{3}(n)\ll \sum_{\substack{ r_{1}+x_{4}=\ldots=x_{2g+1}+x_{2g+2}\\ x_{4}\leq r_{1}}}r_{1}^{-\alpha}\prod_{i=4}^{2g+2}x_{i}^{-\alpha}&\ll r_{1}^{-\alpha+(1-2\alpha)(g-1)}\sum_{\substack{ x_{4}\leq r_{1} }}x_{4}^{-\alpha}\ll n^{\varepsilon(1-2\alpha)g}.
\end{align*}
If on the contrary the above does not happen and $x_{2l+1},x_{2l+2}\in \{y_{1},y_{2},y_{3}\}$ then one would have $x_{1}+x_{4}+y_{3}=n$ with $y_{3}\in\bfr$ or $y_{3}=x_{4}$, the ensuing conclusion being that $x_{4}$ is determined once $x_{1}$ is fixed and $x_{2l+1}x_{2l+2}\gg x_{1}\gg n^{\varepsilon}$. The same argument that leads to (\ref{mario}) permits one to obtain that the corresponding contribution is $O(n^{-\varepsilon\alpha})$. The remaining situation is that in which $y_{1}=x_{4}$ and $y_{2},y_{3}\in\bfr$, it being analogous to the previously described one.
\end{proof}
Before examining the expected value of $X_{n,\bfr}(A)$ it seems worth recalling (\ref{pata}) and presenting the following auxiliary technical lemma.
\begin{lem}\label{lem7.6}
Let $\bfr\subset \mathbb{N}\cap [1,n]$ with $\lvert \bfr\rvert\leq 2g+4$ and fixed $r_{1}\gg n^{\varepsilon}$. For some $\delta_{g}>0$ then
$$\sum_{\substack{\bfx\in T_{n}^{ind}}}\prod_{i=1}^{2g+4}x_{i}^{-\alpha}\sum_{\substack{\{y_{1},y_{2},y_{3}\}\in \mathcal{C}_{\bfr,\bfx}(n)\\ \{y_{1},y_{2},y_{3}\}\neq\{x_{1},x_{2},x_{3}\}}}1\ll n^{-\delta_{g}},\ \ \ \ \ \sum_{\substack{\bfx\in V_{r_{1}}\\ \{y_{1},y_{2},y_{3}\}\in \mathcal{C}_{\bfr,\bfx}(n)}}\prod_{i=4}^{2g+4}x_{i}^{-\alpha}\ll n^{-\delta_{g}}.$$
\end{lem}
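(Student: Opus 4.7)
The approach is a case analysis on how the triple $\{y_1, y_2, y_3\}$ partitions between $\bfr$, the head $\{x_1, x_2, x_3\}$ (in the first sum), and the block $\tilde{\bfx} = (x_4, \ldots, x_{2g+4})$. The relation $y_1 + y_2 + y_3 = n$ supplied by $\{y_1, y_2, y_3\} \in R_n$, when combined with the linear identities built into $T_n^{ind}$ (namely $x_1 + x_2 + x_3 = n$ and the common block sum $x_1 + x_4 = x_5 + x_6 = \cdots = x_{2g+3} + x_{2g+4}$), produces in each case a constraint that pins at least one coordinate of $\bfx$. Iterating Lemma \ref{lem1.1} over the remaining block structure exactly as in (\ref{ela}) and the proofs of Lemmas \ref{lem7.1} and \ref{lem7.2} then delivers an extra factor of $n^{-\delta_g}$ beyond the baseline bound $\mathbb{E}(\lvert T_n^{ind}(A)\rvert) \ll \lambda \log n$.

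For the first sum I would split according to $k = \lvert \{y_1, y_2, y_3\} \cap \bfr \rvert \in \{0, 1, 2\}$, the value $k = 3$ being excluded by the requirement $y_3 \in \text{Set}(\bfx) \setminus \bfr$ built into $\mathcal{C}^3_{\bfr, \bfx}$. When $k = 2$ the integer $y_3 = n - y_1 - y_2$ is determined by $\bfr$, so one coordinate of $\bfx$ equals a prescribed value; the power saving depends on whether this coordinate is small ($\asymp g_\lambda(n)$, yielding $n^{-\alpha\varepsilon}$) or large ($\asymp n$, yielding $n^{-\alpha}$). When $k = 1$ one has $y_2 + y_3 = n - y_1$ with both $y_2, y_3 \in \text{Set}(\bfx)$; if $\{y_2, y_3\}$ coincides with one of the block pairs $\{x_{2j-1}, x_{2j}\}$ already equated in $T_n^{ind}$ then this fixes the common block sum $x_1 + x_4 = n - y_1$ and thus determines $x_4$ from $x_1$ and $\bfr$, whereas if $\{y_2, y_3\}$ does not coincide with a block pair the identity is genuinely new and directly pins a coordinate as in the analysis following (\ref{pelele}). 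When $k = 0$ all three $y_i$ lie in $\text{Set}(\bfx) \setminus \bfr$, and the proviso $\{y_1, y_2, y_3\} \neq \{x_1, x_2, x_3\}$ forces at least one $y_i$ into $\tilde{\bfx}$, which, together with $x_1 + x_2 + x_3 = n$, supplies a non-trivial relation handled via the same reductions as in Lemma \ref{lem7.4}.

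The second estimate follows the same template with $r_1$ playing the role of $x_1$ and no exclusion of $\{y_1, y_2, y_3\} = \{x_1, x_2, x_3\}$ being required, since the head triple is absent in $V_{r_1}$. The constraint $y_1 + y_2 + y_3 = n$ combined with the block identity $r_1 + x_4 = x_5 + x_6 = \cdots = x_{2g+3} + x_{2g+4}$ always pins a coordinate, and in the cases where some $y_i = x_{2l+1}$ for $l \geq 2$ with $x_{2l+2} \notin \{y_1, y_2, y_3\}$ one uses $x_{2l+1} x_{2l+2} \gg r_1 \gg n^\varepsilon$ exactly as in Lemma \ref{lem7.5} to recover a power saving; the remaining sub-cases reduce to pinning $x_4$ in terms of $r_1$ and $\bfr$, whose contribution is controlled by the same Lemma \ref{lem1.1} telescoping used in (\ref{mario}).

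The main obstacle will be the careful bookkeeping in the case $k = 0$ of the first sum, where one must verify that the proviso $\{y_1, y_2, y_3\} \neq \{x_1, x_2, x_3\}$ does indeed render the relation $y_1 + y_2 + y_3 = n$ algebraically independent of the identities already in force in $T_n^{ind}$. The delicate sub-case is when $\{y_1, y_2, y_3\} \cap \{x_1, x_2, x_3\}$ has cardinality two, since a naive handling would leave a single coordinate $y_3 \in \tilde{\bfx}$ equal to a fixed $x_k \in \{x_1, x_2, x_3\}$, giving only a saving in $g_\lambda(n)^{-\alpha}$ or $n^{-\alpha}$ depending on which $x_k$; this is however sufficient since $g_\lambda(n) \gg n^\varepsilon$, and the ambient factor $(\log n)^B$ which arises from the telescoping is absorbed by the power of $n^{-\delta_g}$ exactly as in the final bounds of Lemmas \ref{lem7.1} and \ref{lem7.4}.
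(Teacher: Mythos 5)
Your proposal is correct and follows essentially the same route as the paper's proof: a case analysis on where the triple $\{y_{1},y_{2},y_{3}\}$ sits relative to $\bfr$, the head triple $\{x_{1},x_{2},x_{3}\}$ and the block coordinates, using $y_{1}+y_{2}+y_{3}=n$ together with the identities defining $T_{n}^{ind}$ (resp.\ $V_{r_{1}}$) to pin a coordinate, telescoping the remaining sums via Lemma \ref{lem1.1}, and invoking Lemma \ref{lem7.5} for the configurations disjoint from the head triple — the paper merely organizes the cases by $\lvert\{y_{1},y_{2},y_{3}\}\cap\{x_{1},x_{2},x_{3}\}\rvert\in\{0,1\}$ rather than by $\lvert\{y_{1},y_{2},y_{3}\}\cap\bfr\rvert$. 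One bookkeeping caveat: when the pinned element is a block coordinate $x_{j}$ with $j\geq 4$ (which need not be $\asymp g_{\lambda}(n)$ and can be very small), the power saving comes not from its own factor $x_{j}^{-\alpha}$ but from the collapsed block sum together with $x_{2l+1}x_{2l+2}\gg x_{1}\gg n^{\varepsilon}$, exactly as in the paper's treatment, so your stated dichotomy of savings in the case $k=2$ (and in the cardinality-two sub-case, which is in fact vacuous in $T_{n}^{ind}$ by distinctness of coordinates) should be adjusted accordingly, though the conclusion is unaffected.
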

\begin{proof}
We begin by assuming firstly that $\lvert\{y_{1},y_{2},y_{3}\}\cap\{x_{1},x_{2},x_{3}\}\rvert=1$ and note 
\begin{equation}\label{kalis}\sum_{\substack{\bfx\in T_{n}^{ind}\\ \{y_{1},y_{2},y_{3}\}\in \mathcal{C}_{\bfr,\bfx}(n)\\ \lvert\{y_{1},y_{2},y_{3}\}\cap\{x_{1},x_{2},x_{3}\}\rvert=1}}\prod_{i=1}^{2g+4}x_{i}^{-\alpha}\ll \sum_{\substack{\bfx\in T_{n}^{ind}\\  \{y_{1},y_{3},x_{1}\}\in \mathcal{C}_{\bfr,\bfx}(n)\\  \{y_{1},y_{3}\}\neq \{x_{2},x_{3}\}}}\prod_{i=1}^{2g+4}x_{i}^{-\alpha}+\sum_{\substack{\bfx\in T_{n}^{ind}\\ \{y_{1},y_{3},x_{2}\}\in \mathcal{C}_{\bfr,\bfx}(n)\\  \{y_{1},y_{3}\}\neq \{x_{1},x_{3}\}}}\prod_{i=1}^{2g+4}x_{i}^{-\alpha}.
\end{equation}
We analyse the second summand in the above equation, recall (\ref{rariti}) and observe that 
\begin{align*}n^{2\alpha+\varepsilon\alpha}\sum_{\substack{\bfx\in T_{n}^{ind}\\ \{y_{1},y_{3},x_{2}\}\in \mathcal{C}_{\bfr,\bfx}(n)\\ \{y_{1},y_{3}\}\neq \{x_{1},x_{3}\}}}\prod_{i=1}^{2g+4}x_{i}^{-\alpha}&\ll \sum_{\substack{\tilde{\bfx}\in V_{x_{1}}\\ x_{4}<x_{1}\leq n}}\prod_{i=4}^{2g+4}x_{i}^{-\alpha}\sum_{\substack{x_{2}+x_{3}=n-x_{1}\\ x_{2}+y_{1}+y_{3}=n\\ y_{1},y_{3}\in\text{Set}(\tilde{\bfx})\cup \bfr}}1\ll \sum_{x_{1}\leq n}1\ll n,
\end{align*}
where we employed Lemma \ref{lem1.1} and (\ref{esti}) in the second step. For the first summand in (\ref{kalis}) we discard first the instance when $y_{1},y_{3}\in\bfr$ by deriving in a similar fashion
\begin{align*}n^{2\alpha+\varepsilon\alpha}\sum_{\substack{\bfx\in T_{n}^{ind}\\ \{y_{1},y_{3},x_{1}\}\in \mathcal{C}_{\bfr,\bfx}(n)\\ y_{1},y_{3}\in\bfr}}\prod_{i=1}^{2g+4}x_{i}^{-\alpha}&\ll \sum_{\substack{x_{1}+x_{3}=n-x_{2}\\ x_{1}+y_{1}+y_{3}=n\\ y_{1},y_{3}\in\bfr}}\ \ \sum_{\substack{\tilde{\bfx}\in V_{x_{1}}\\ x_{4}<x_{1}}}\prod_{i=4}^{2g+4}x_{i}^{-\alpha}\ll \sum_{x_{2}\leq n}1\ll n.
\end{align*} 

If $y_{1}=x_{2i+1}$ for $2\leq i\leq g+1$ (and similarly for even index) and $y_{3}\neq x_{2i+2}$, say $i=g+2$ without loss of generality, then the corresponding contribution will be bounded above by
\begin{align*} &\sum_{\substack{\{x_{1},x_{2},x_{3}\}\in R_{n}\\ x_{1}+x_{4}=\ldots=x_{2g+1}+x_{2g+2}\\ x_{4}\leq x_{1}}}(x_{1}x_{2}x_{3})^{-\alpha}\prod_{j=4}^{2g+2}x_{j}^{-\alpha} \sum_{\substack{\{x_{1},x_{2g+3},y_{3}\}\in  \mathcal{C}_{\bfr,\bfx}(n) \\ x_{1}+x_{4}=x_{2g+3}+x_{2g+4}\\ y_{3}\neq x_{2i+2}}}(x_{2g+3}x_{2g+4})^{-\alpha}
\\
&\ll \sum_{\substack{\{x_{1},x_{2},x_{3}\}\in R_{n}}}x_{1}^{(1-2\alpha)(g-1)-2\alpha}(x_{2}x_{3})^{-\alpha}\sum_{x_{4}\leq x_{1}}x_{4}^{-\alpha}\ll_{\lambda} n^{(1-2\alpha)g\varepsilon}\log n,
\end{align*}
wherein we applied as is customary Lemma \ref{lem1.1} and (\ref{ela}). When instead $y_{1}=x_{2i+1}$ and $y_{3}=x_{2i+2}$ then such a contribution shall be bounded above by
\begin{align*}\sum_{\substack{\bfx\in T_{n}^{ind}\\ x_{4}\leq x_{1}\\ 2x_{1}+x_{4}=n}}\prod_{i=1}^{2g+4}x_{i}^{-\alpha}&\ll \sum_{\substack{\{x_{1},x_{2},x_{3}\}\in R_{n}}}(x_{1}x_{2}x_{3})^{-\alpha}x_{1}^{(1-2\alpha)g}\ll_{\lambda} n^{\varepsilon(1-2\alpha)g}\log n,
\end{align*}
wherein we employed (\ref{ela}). The instance when $\{y_{1},y_{3}\}\in \{x_{4}\}\cup \bfr$ shall be done in an analogous manner. If on the contrary $\{y_{1},y_{2},y_{3}\}\cap\{x_{1},x_{2},x_{3}\}=\o$ then an application of Lemma \ref{lem7.5} for the case $j=0$ would suffice. The preceding discussion permits one to derive the first estimate in the lemma. The second one follows by a similar argument combined with an application of Lemma \ref{lem7.5} for the case $j=3$.
\end{proof}
Equipped with the above lemmata and recalling (\ref{tama}) we derive the following result.
\begin{prop}\label{propo7}
Whenever $\bfr\subset\mathbb{N}\cap [1,n]$ satisfies $\lvert \bfr\rvert\leq 2g+3$ one has that
\begin{equation*}\mathbb{E}\big(\lvert X_{n,\bfr}(A)\rvert\ \big|\bfr\subset A\big)\ll n^{-\delta_{g}},\ \ \ \ \ \ \ \ \ \end{equation*} wherein $\delta_{g}>0$ is fixed and the implicit constant does not depend on $\bfr$.
\end{prop}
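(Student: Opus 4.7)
The first move is to unfold $\lvert T_{n,\bfr\cup\overline{\bfx}}^{5}(A)\rvert = \sum_{\bfy\in T_{n,\bfr\cup\overline{\bfx}}^{5}}\leavevmode\hbox{$1\!\rm I$}_{A}(\bfy)$ and compute, by the independence of the events $\{z\in A\}$ conditional on $\bfr\subset A$,
$$\mathbb{E}\big(\leavevmode\hbox{$1\!\rm I$}_{A}(\bfx)\,\leavevmode\hbox{$1\!\rm I$}_{A}(\bfy)\,\big|\,\bfr\subset A\big)=\prod_{z\in(\text{Set}(\bfx)\cup\text{Set}(\bfy))\setminus\bfr}\lambda^{-1}z^{-\alpha},$$
so that $\mathbb{E}(\lvert X_{n,\bfr}(A)\rvert\mid\bfr\subset A)$ is controlled by a deterministic weighted triple sum over tuples $(\bfx,\overline{\bfx},\bfy)$ together with the subtracted quantity. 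The remark recorded just after (\ref{Tnr3n}) yields the natural dichotomy $\lvert\bfr\cup\overline{\bfx}\rvert=3$ versus $\lvert\bfr\cup\overline{\bfx}\rvert\geq g+2$ into which I would partition the analysis.

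Assume first $\bfr\neq\o$, so no subtraction is present. When $\lvert\bfr\cup\overline{\bfx}\rvert=3$ one has $\{y_{1},y_{2},y_{3}\}\in\mathcal{C}_{\bfr,\bfx}(n)$ as defined in (\ref{pata}), and since $\bfr\cap\text{Set}(\bfx)=\o$ with $\bfr\neq\o$, the triple cannot coincide with $\{x_{1},x_{2},x_{3}\}$. Bounding the tail $(y_{4},\ldots,y_{2g+4})$ conditionally (an $O(1)$ contribution analogous to the one supplied by Lemma \ref{lem7.2}) and then applying Lemma \ref{lem7.6} to the outer sum produces the saving $n^{-\delta_{g}}$. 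When $\lvert\bfr\cup\overline{\bfx}\rvert\geq g+2$, at least $g-1$ of the pairs $\{y_{2l-1},y_{2l}\}$ must lie inside $\bfr\cup\overline{\bfx}$; the quadruples $\{y_{1},y_{4},y_{2l-1},y_{2l}\}\in\mathcal{C}_{\bfr,\bfx}$ that result are either in $\mathcal{S}_{\bfx}$, in which case the same moment bookkeeping underlying Lemma \ref{lem7.2} disposes of them, or they are not, in which case Lemma \ref{lem7.4} supplies the saving. The remaining unconstrained coordinates are absorbed via Lemma \ref{lem32} applied to the appropriate linear form $F(\bfy)$.

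Now consider $\bfr=\o$. Here the deterministic sum contains a \emph{diagonal block} coming from pairs $(\overline{\bfx},\bfy)$ with $\overline{\bfx}=\{x_{1},x_{2},x_{3}\}$ and $\bfy\in T_{n,\overline{\bfx}}^{5}$ whose extension $(y_{4},\ldots,y_{2g+4})$ is entirely disjoint from $\text{Set}(\bfx)$. This is exactly the block that the subtracted term is designed to remove: the tuple $\bfy=\bfx$ contributes the summand $1$, while the three sums $\sum_{\bfy\in V_{x_{l}}}\leavevmode\hbox{$1\!\rm I$}_{A}(\bfy)$ match the contributions of the genuine extensions once one organises them according to which element of $\{x_{1},x_{2},x_{3}\}$ is the designated anchor in each orientation and tracks the combinatorial weights produced by the sum over $\overline{\bfx}\subset\bfx$. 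After this cancellation, the residual splits into two further cases: either $\overline{\bfx}\neq\{x_{1},x_{2},x_{3}\}$, to which the analysis of the previous paragraph applies verbatim via Lemma \ref{lem7.6} with the explicit exclusion $\{y_{1},y_{2},y_{3}\}\neq\{x_{1},x_{2},x_{3}\}$; or $\overline{\bfx}=\{x_{1},x_{2},x_{3}\}$ with $\text{Set}(\bfy)\cap\{x_{4},\ldots,x_{2g+4}\}\neq\o$, in which case the forced coincidence produces either a quadruple in $\mathcal{C}_{\o,\bfx}\setminus\mathcal{S}_{\bfx}$ (Lemma \ref{lem7.4}) or a nondegenerate linear form $F(\bfy)\neq 0$ (Lemma \ref{lem32}).

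The main obstacle is the combinatorial bookkeeping needed to verify that the subtracted quantity cancels the diagonal block exactly, including the correct multiplicities from the sum over $\overline{\bfx}$, and to enumerate the coincidence patterns in the residual so that each one is covered by precisely one of Lemmas \ref{lem32}, \ref{lem7.4}, \ref{lem7.5}, \ref{lem7.6}. Once this dictionary is in place each residual sub-sum reduces to a routine application of Lemma \ref{lem1.1}, and taking $\delta_{g}>0$ to be the minimum of the exponents furnished by the cited lemmata yields the stated bound uniformly in $\bfr$.
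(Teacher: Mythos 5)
Your overall strategy is the paper's: expand the conditional expectation, sort the pairs $(\overline{\bfx},\bfy)$ by their coincidence pattern with $\bfr\cup\text{Set}(\bfx)$, cancel the $\bfr=\o$ diagonal against the subtracted term in (\ref{tama}) (your exact-cancellation check is precisely the qualifier following (\ref{aas})), and feed each residual configuration into Lemmata \ref{lem32}, \ref{lem7.2}, \ref{lem7.4}, \ref{lem7.5}, \ref{lem7.6}. The difference in the primary decomposition — your split by $\lvert \bfr\cup\overline{\bfx}\rvert\in\{3\}\cup[g+2,2g+4]$ versus the paper's split by $\lvert\{x_{1},x_{2},x_{3}\}\cap\{y_{1},y_{2},y_{3}\}\rvert\in\{0,1,3\}$ in (\ref{kiruna})--(\ref{aas}) — is cosmetic. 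The problem is the case in your dictionary where the forced quadruple lies in $\mathcal{S}_{\bfx}$, for instance $y_{1}=x_{1}$, $y_{4}=x_{4}$ and one pair of $\bfy$ equal to a pair of $\bfx$. There the linear relation $y_{1}+y_{4}=y_{2l-1}+y_{2l}$ is satisfied automatically by the components of $\bfx$, so neither Lemma \ref{lem7.4} nor Lemma \ref{lem32} yields anything, and the ``moment bookkeeping underlying Lemma \ref{lem7.2}'' you invoke only gives $\mathbb{E}\big(\lvert T^{5}_{n,\bfr'}(A)\rvert\,\big|\,\bfr'\subset A\big)\ll 1$; summing this over $\bfx\in T_{n}^{ind}$ against $\prod_{i}x_{i}^{-\alpha}$ produces $O(\log n)$, not $O(n^{-\delta_{g}})$. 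In your $\bfr=\o$ residual this case is silently missing from the dichotomy ``quadruple in $\mathcal{C}_{\o,\bfx}\setminus\mathcal{S}_{\bfx}$ or nondegenerate linear form'', yet it does occur: a tuple $\bfy$ sharing the triple with $\bfx$ that recycles $x_{4}$ and one full pair of $\bfx$ while its remaining pairs are fresh is counted by $T^{5}_{n,\{x_{1},x_{2},x_{3}\}}(A)$ and is not removed by the subtraction.

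The paper does not bound these configurations inside Proposition \ref{propo7} at all: they are excluded from the starred sums (the condition $\{x_{j},r_{4},r_{5},r_{6}\}\notin\mathcal{S}_{\bfx}$ in the bound for $H_{3,\bfr}(n)$ and the restriction stated after (\ref{aas})), and the way $X_{n,\bfr}(A)$ is deployed in Section \ref{sec9} (the provisos in cases $iii)$ and $iv)$ of the collection $\mathcal{K}'$, together with $\mathcal{K}_{1}$ and $\mathcal{K}_{1}'$) guarantees that the omitted configurations are handled by the counting argument there, where an $O(1)$ bound per sum suffices. If you insist on keeping them inside the proposition you must instead extract the power saving from the pairs of $\bfy$ that are \emph{not} recycled from $\bfx$ — this is available, since recycling the anchor pair and every pair forces $\text{Set}(\bfy)=\text{Set}(\bfx)$, which is exactly the subtracted diagonal — but that step, which is where the actual saving comes from, is absent from your proposal. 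A smaller inaccuracy: for $\lvert\bfr\cup\overline{\bfx}\rvert\geq g+2$ the remark after (\ref{Tnr3n}) gives that every pair of $\bfy$ meets $\bfr\cup\overline{\bfx}$ in at least one element, not that $g-1$ pairs are wholly contained in it; the partially pinned pairs are precisely the ones requiring Lemma \ref{lem32}, which you do cite, so this is repairable, but as stated your reduction to quadruples in $\mathcal{C}_{\bfr,\bfx}$ does not cover them.
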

\begin{proof}
An insightful inspection of the definition (\ref{tama}) reveals that 
\begin{align}\label{kiruna}\mathbb{E}\big(\lvert X_{n,\bfr}(A)\rvert\ \big|\bfr\subset A\big)\ll H_{0,\bfr}(n)+H_{1,\bfr}(n)+H_{3,\bfr}(n),
\end{align}
where
\begin{equation}\label{aas}H_{i,\bfr}(n)=\sum_{\substack{\bfx\in T_{n}^{ind}\\ \o\neq \overline{\bfx}\subset\text{Set}(\bfx)\\ \text{Set}(\bfx)\cap\bfr=\o}}\mathbb{P}\big(\text{Set}(\bfx)\subset A\big)\mathop{{\sum_{\substack{\bfy\in  T_{n,\bfr\cup \overline{\bfx}}^{5}\\ |\{x_{1},x_{2},x_{3}\}\cap\{y_{1},y_{2},y_{3}\}|=i\\  }}}^*}\mathbb{P}\big(\text{Set}(\bfy)\subset A \big|\ \overline{\bfx},\bfr\subset A\big)\end{equation}
for $i=0,1,3,$ and wherein whenever $\bfr=\o$ then the underlying tuples $\bfy$ don't satisfy that either $\text{Set}(\bfx)\cap \text{Set}(\bfy)=\{x_{1},x_{2},x_{3}\}=\{y_{1},y_{2},y_{3}\}$ with $\bfy\in T_{n}^{ind}$, or $\text{Set}(\bfx)=\text{Set}(\bfy)$. We examine first the case $i=3$, apply Lemma \ref{lem1.1} and split the corresponding sum into
\begin{align*}
H_{3,\bfr}(n)\ll&\sum_{j=1}^{3}\sum_{\bfx\in T_{n}^{ind}}(x_{1}\cdots x_{2g+4})^{-\alpha}\Bigg(\mathop{{\sum_{\substack{x_{j}+y_{4}=y_{5}+r_{6}\\ r_{6}\in\bfr\cup\text{Set}(\bfx)}}}^*}(y_{4}y_{5})^{-\alpha}+\mathop{{\sum_{\substack{x_{j}+r_{4}=r_{5}+y_{6}\\ r_{4},r_{5}\in\bfr\cup\text{Set}(\bfx)}}}^*}y_{6}^{-\alpha}
\\
&+\mathop{{\sum_{\substack{x_{j}+y_{4}=r_{5}+r_{6}\\ r_{5},r_{6}\in\bfr\cup\text{Set}(\bfx)}}}^*}y_{4}^{-\alpha}+\mathop{{\sum_{\substack{x_{j}+r_{4}=r_{5}+r_{6}\\ r_{4},r_{5},r_{6}\in\bfr\cup\text{Set}(\bfx)}}}^*}1\Bigg),
\end{align*}
where $\max(r_{5},r_{6})< x_{j}$ and in the last summand $\{x_{j},r_{4},r_{5},r_{6}\}\notin \mathcal{S}_{\bfx}$, it being worth recalling (\ref{sx}). We observe that Lemma \ref{lem1.1} yields
\begin{align}\label{H3}H_{3,\bfr}(n)\ll &\sum_{j=1}^{3}\mathop{{\sum_{\substack{\bfx\in T_{n}^{ind}\\ r_{4},r_{5},r_{6}\in \bfr\cup\text{Set}(\bfx)}}}^*}\prod_{i=1}^{2g+4}x_{i}^{-\alpha}\Big((x_{j}-r_{6})^{1-2\alpha}+(x_{j}+r_{4}-r_{5})^{-\alpha}
\\
&+(r_{5}+r_{6}-x_{j})^{-\alpha}+\leavevmode\hbox{$1\!\rm I$}_{r_{5}+r_{6}-r_{4}}(x_{j})\Bigg),\nonumber
\end{align}
wherein the tuples satisfy the additional property that the expressions in brackets are positive and in the last summand $\{x_{j},r_{4},r_{5},r_{6}\}\notin \mathcal{S}_{\bfx}$. The use of Lemma \ref{lem32} permits one to deduce that the contribution corresponding to the first three summands in the above equation is $O(n^{-\delta_{g}})$ for some $\delta_{g}>0$, the one corresponding to the last summand being estimated via the employment of Lemma \ref{lem7.4}.

We shall focus our attention next on the term $H_{1,\bfr}(n)$ and note that an application of Lemmata \ref{lem7.2} and \ref{lem7.6}  to (\ref{aas}) reveals that 
\begin{align*}H_{1,\bfr}(n)&\ll \sum_{\substack{\bfx\in T_{n}^{ind}}}\prod_{i=1}^{2g+4}x_{i}^{-\alpha}\sum_{\substack{\{y_{1},y_{2},y_{3}\}\in \mathcal{C}_{\bfr,\bfx}(n)\\   \lvert\{y_{1},y_{2},y_{3}\}\cap\{x_{1},x_{2},x_{3}\}\rvert=1}}1\ll n^{-\delta_{g}},
\end{align*}
as desired. We conclude our investigation by examining $H_{0,\bfr}(n)$ and write
\begin{align}\label{kalim}H_{0,\bfr}(n)\ll & \sum_{\substack{\bfx\in T_{n}^{ind}\\ \text{Set}(\bfx)\cap\bfr=\o}}\mathbb{P}\big(\text{Set}(\bfx)\subset A\big)\mathop{{\sum_{\substack{\bfy\in T_{n}\\ \{y_{1},y_{2},y_{3}\}\subset \bfr\subset\text{Set}(\bfy)\\ \text{Set}(\bfx)\cap \text{Set}(\bfy)\neq \o}}}^*}\mathbb{P}\big(\text{Set}(\bfy)\subset A \big|\ \text{Set}(\bfx),\bfr\subset A\big)
\\
&+ \sum_{\substack{\bfx\in T_{n}^{ind}\\ \text{Set}(\bfx)\cap\bfr=\o}}\mathbb{P}\big(\text{Set}(\bfx)\subset A\big)\mathop{{\sum_{\substack{\bfy\in T_{n}\\ \bfr\subset\text{Set}(\bfy)\\ \{y_{1},y_{2},y_{3}\}\in\mathcal{C}_{\bfx,\bfr}(n)}}}^*}\mathbb{P}\big(\text{Set}(\bfy)\subset A \big|\text{Set}(\bfx),\bfr\subset A\big),\nonumber
\end{align}
wherein we omitted as we shall do henceforth writing $\{y_{1},y_{2},y_{3}\}\cap\{x_{1},x_{2},x_{3}\}=\o$. We employ a similar argument as the one to derive (\ref{H3}) to deduce that the first summand in the above equation is bounded above by a constant times
\begin{align*}\sum_{\substack{\bfx\in T_{n}^{ind}}}\prod_{i=1}^{2g+4}x_{i}^{-\alpha}\sum_{\substack{r_{4},r_{5},r_{6}\in \bfr\cup\text{Set}(\bfx)\\ \overline{x}\in\text{Set}(\bfx)}}\Bigg(&(r-\overline{x})^{1-2\alpha}+(r+r_{4}-\overline{x})^{-\alpha}+(r+\overline{x}-r_{5})^{-\alpha}
\\
&+(\overline{x}+r_{6}-r)^{-\alpha}+\leavevmode\hbox{$1\!\rm I$}_{r_{5}+r_{6}-r}(\overline{x})+\leavevmode\hbox{$1\!\rm I$}_{r+r_{4}-r_{5}}(\overline{x})\Bigg)
\end{align*}
for some $r\in\bfr$, where we omitted indicating that each linear factor in the above lines is positive. We note as is customary that an application of Lemmata \ref{lem32} and \ref{lem7.4} enables one to deduce that the contribution corresponding to the first four summands and the last two respectively in the preceding lines is $O(n^{-\delta_{g}})$ for some $\delta_{g}>0$. An application of Lemma \ref{lem7.2} and \ref{lem7.5} for the instance $j=0$ would then permit one to deduce that the second summand in (\ref{kalim}) is $O(n^{-\delta_{g}}).$ The combination of the corresponding bounds for $H_{i,\bfr}(n)$ and (\ref{kiruna}) enables one to derive the desired estimate for $\mathbb{E}\big(\lvert X_{n,\bfr}(A)\rvert\ \big|\bfr\subset A\big)$.
\end{proof}

We continue by taking a fixed $r_{1}\gg n^{\varepsilon}$, recalling (\ref{prras}), (\ref{Tnr3n}), (\ref{Vn}), defining for $\bfr\subset \mathbb{N}$ the function $\leavevmode\hbox{$1\!\rm I$}_{\o,3}(\bfr)=1$ if $r_{1}\in \bfr\in R_{n}$ and $\leavevmode\hbox{$1\!\rm I$}_{\o,3}(\bfr)=0$ else and introduce the sum
\begin{equation}\label{Bruja}Y_{n,\bfr}(A)=\sum_{\substack{\bfx\in V_{r_{1}}\\ \text{Set}(\bfx)\cap\bfr=\o}}\prod_{j=4}^{2g+4}\leavevmode\hbox{$1\!\rm I$}_{A}(x_{j})\Big(-\leavevmode\hbox{$1\!\rm I$}_{\o,3}(\bfr)+\sum_{\substack{\o\neq \overline{\bfx}\subset\text{Set}(\bfx)}}\lvert T_{n,\bfr\cup \overline{\bfx}}^{5}(A)\rvert\Big).\end{equation}

\begin{prop}\label{cila}
One has for $\bfr\subset \mathbb{N}\cap [1,n]$ with $\lvert \bfr\rvert \leq 2g+3$ and $r_{1}\gg n^{\varepsilon}$ that
\begin{equation*}\mathbb{E}\big(\lvert Y_{n,\bfr}(A)\rvert\ \big|\bfr\subset A\big)\ll n^{-\delta_{g}}, \end{equation*} wherein $\delta_{g}>0$ is fixed and the implicit constant does not depend on $\bfr$.
\end{prop}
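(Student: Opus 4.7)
The plan is to mirror the proof of Proposition \ref{propo7}, with the outer sum over $\bfx \in T_n^{ind}$ replaced by the sum over $\bfx \in V_{r_1}$ and the supporting estimates drawn from the $V_{r_1}$-analogues already recorded, namely the second assertions of Lemmata \ref{lem32}, \ref{lem7.4}, \ref{lem7.5} and \ref{lem7.6}. After conditioning on $\bfr \subset A$ and exploiting the independence of the events $x_j \in A$, one expands $\lvert T^5_{n, \bfr\cup\overline{\bfx}}(A)\rvert$ as a sum over $\bfy$ and decomposes
$$\mathbb{E}\big(\lvert Y_{n,\bfr}(A)\rvert \mid \bfr \subset A\big) \ll \sum_{i=0}^{3} H_{i,\bfr}^{*}(n),$$
where $H_{i,\bfr}^{*}(n)$ collects the contribution from tuples $\bfy$ satisfying $\lvert\{y_1,y_2,y_3\} \cap \text{Set}(\bfx)\rvert = i$, with $\text{Set}(\bfx) = \{x_4,\ldots,x_{2g+4}\}$; the compensating term $-\leavevmode\hbox{$1\!\rm I$}_{\o,3}(\bfr)$ is absorbed into the $i = 0$ summand.

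For $i = 3$ one follows the bookkeeping that led to (\ref{H3}), with the chain $r_1 + x_4 = x_5 + x_6 = \cdots = x_{2g+3} + x_{2g+4}$ now pivoting on the fixed $r_1$ rather than on a free variable $x_1$. Each pairing of some $y_j$ with a $y_{2l-1} + y_{2l}$ from the $T^5$-chain gives rise to a sum of the form appearing in \eqref{lasi} and is therefore controlled by the second assertion of Lemma \ref{lem32}; the residual degenerate configurations in which the corresponding linear form $F$ vanishes identically are precisely those parametrised by $\mathcal{S}_{\tilde{\bfx}, r_1}$ of (\ref{sxs}), and are handled by the second part of Lemma \ref{lem7.4}. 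For the intermediate cases $i = 1, 2$, an application of Lemma \ref{lem7.2} to the inner $T^5$-conditional expectation reduces matters to the second estimate of Lemma \ref{lem7.6}. In the case $i = 0$ one has $\{y_1, y_2, y_3\} \subset \bfr$, which is only possible when $\bfr$ contains a triple in $R_n$: when this forces $\bfr$ itself to be such a triple (i.e. $\leavevmode\hbox{$1\!\rm I$}_{\o,3}(\bfr) = 1$), the distinguished diagonal configuration with $\{y_1, y_2, y_3\} = \bfr$ is cancelled by the subtraction, while the remaining possibilities (where $\{y_1, y_2, y_3\}$ is a different $R_n$-triple inside a larger $\bfr$, or where the completion of $\bfy$ involves a variable outside the $\bfx$-chain) are governed by Lemma \ref{lem7.5} with $j = 3$.

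The principal delicacy lies in this $i = 0$ case: the raw $T^5$-contribution stemming from the distinguished diagonal configuration is not by itself small, and it is only the cancellation against $-\leavevmode\hbox{$1\!\rm I$}_{\o,3}(\bfr)$ that produces the required saving. The hypothesis $r_1 \gg n^\varepsilon$ is used essentially to guarantee that every non-degenerate linear form $F(\bfy)$ arising in the case $i = 3$ attains size at least a positive power of $n^{\varepsilon}$, so that the power saving afforded by \eqref{lasi} is applicable; this saving, together with those furnished by the second parts of Lemmata \ref{lem7.4}, \ref{lem7.5} and \ref{lem7.6}, yields the desired estimate $O(n^{-\delta_g})$ upon summing the four contributions.
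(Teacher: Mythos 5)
Your overall architecture---expanding $\lvert T^{5}_{n,\bfr\cup\overline{\bfx}}(A)\rvert$, classifying the tuples $\bfy$ according to where the triple $\{y_{1},y_{2},y_{3}\}$ lives, and invoking the $V_{r_{1}}$-analogues of the auxiliary lemmata---is the same as the paper's, but your assignment of cases to lemmas is swapped at the two extreme cases, and this leaves a genuine gap. For the case $\{y_{1},y_{2},y_{3}\}\subset\bfr$ (your $i=0$) you claim that, after cancelling the diagonal against $-\leavevmode\hbox{$1\!\rm I$}_{\o,3}(\bfr)$, the remaining contribution is governed by Lemma \ref{lem7.5} with $j=3$. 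It is not: the triples counted there lie in $\mathcal{C}_{\bfr,\tilde{\bfx}}(n)$, which by (\ref{rac1}) and (\ref{pata}) forces one element to lie in $\text{Set}(\bfx)\setminus\bfr$, i.e.\ exactly the opposite situation, so that lemma says nothing about triples contained in $\bfr$. This case is in fact the crux of the proposition: the saving must come from the chain part of $\bfy$, so one has to carry out the (\ref{H3})-type expansion (pivoting now on elements $r_{1}'\in\bfr\cup\text{Set}(\bfx)$ and $\overline{x}\in\text{Set}(\bfx)$), bound the resulting nondegenerate linear forms by the second assertion (\ref{lasi}) of Lemma \ref{lem32}, bound the accidental coincidences by the second part of Lemma \ref{lem7.4}, and use the subtraction precisely to guarantee $\text{Set}(\bfx)\not\subset\text{Set}(\bfy)$ so that at least one constraint outside $\mathcal{S}_{\tilde{\bfx},r_{1}}$ is available; note also that the subtraction removes a single tuple per $\bfx$, not the whole family of tuples with $\{y_{1},y_{2},y_{3}\}=\bfr$ whose chain parts differ from the $\bfx$-chain, and these genuinely need to be estimated rather than cancelled.

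Symmetrically, in your $i=3$ case you assert that the degenerate configurations parametrised by $\mathcal{S}_{\tilde{\bfx},r_{1}}$ of (\ref{sxs}) ``are handled by the second part of Lemma \ref{lem7.4}''. That lemma sums over $\mathcal{C}_{\bfr,\bfx}\setminus\mathcal{S}_{\bfx,r_{1}}$, so it excludes precisely those configurations; when every chain relation of $\bfy$ reduces to one of the identities already holding in $V_{r_{1}}$ there is no saving from the chain at all, and the decay must instead come from the fact that three components of $\bfx\in V_{r_{1}}$ form an $R_{n}$-triple, that is, from the second estimate of Lemma \ref{lem7.6} (in whose proof Lemma \ref{lem7.5} with $j=3$ actually enters), after removing the conditional probability of the chain part via Lemma \ref{lem7.2}. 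In short, the paper treats all of $i=1,2,3$ uniformly through $\mathcal{C}_{\bfr,\bfx}(n)$, Lemma \ref{lem7.2} and Lemma \ref{lem7.6}, and reserves Lemma \ref{lem32} and Lemma \ref{lem7.4} (together with the $-\leavevmode\hbox{$1\!\rm I$}_{\o,3}(\bfr)$ subtraction) for the $\{y_{1},y_{2},y_{3}\}\subset\bfr$ case; with your routing, neither of the two critical contributions is actually bounded by the lemmas you cite.
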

\begin{proof}
We observe as in the discussion concerning $H_{0,\bfr}(n)$ in Proposition \ref{propo7} that
\begin{align*}\label{kalim}\mathbb{E}\big(\lvert Y_{n,\bfr}(A)&\rvert\ \big|\bfr\subset A\big)\ll  \sum_{\substack{\bfx\in V_{r_{1}}\\  \o\neq \overline{\bfx}\subset \text{Set}(\bfx)\\ \text{Set}(\bfx)\cap \bfr=\o}}\prod_{i=4}^{2g+4}x_{i}^{-\alpha}\mathop{{\sum_{\substack{\bfy\in T_{n}\\ \overline{\bfx},\bfr\subset\text{Set}(\bfy)\\ \{y_{1},y_{2},y_{3}\}\subset \bfr}}}^*}\mathbb{P}\big(\text{Set}(\bfy)\subset A \big|\ \overline{\bfx},\bfr\subset A\big)
\\
&+ \sum_{\substack{\bfx\in V_{r_{1}}}}\prod_{i=4}^{2g+4}x_{i}^{-\alpha}\sum_{\substack{\bfy\in T_{n}\\ \bfr\subset\text{Set}(\bfy)\\ \{y_{1},y_{2},y_{3}\}\in C_{\bfr,\bfx}(n)}}\mathbb{P}\big(\text{Set}(\bfy)\subset A \big|\ \text{Set}(\bfx),\bfr\subset A\big),\nonumber
\end{align*}where in the first sum $\text{Set}(\bfx)\not\subset \text{Set}(\bfy)$ whenever $r_{1}\in \bfr\in R_{n}$. Upon recalling (\ref{sxs}) then the argument in (\ref{H3}) permits one to bound the first summand in the above line by a constant times
\begin{align*}\sum_{\substack{\bfx\in V_{r_{1}}\\ }}\prod_{i=4}^{2g+4}&x_{i}^{-\alpha}\mathop{{\sum_{\substack{r_{1}',r_{4},r_{5},r_{6}\in \bfr\cup\text{Set}(\bfx)\\ \overline{x}\in\text{Set}(\bfx)}}}^*}\Bigg((r_{1}'-\overline{x})^{1-2\alpha}+(r_{1}'+r_{4}-\overline{x})^{-\alpha}+(r_{1}'+\overline{x}-r_{5})^{-\alpha}
\\
&+(r_{5}+\overline{x}-r_{1}')^{-\alpha}+\leavevmode\hbox{$1\!\rm I$}_{r_{5}+r_{6}-r_{1}'}(\overline{x})+\leavevmode\hbox{$1\!\rm I$}_{r_{1}'+r_{4}-r_{5}}(\overline{x})\Bigg),
\end{align*}
where in the above sum $\{r_{1}',\overline{x},r_{5},r_{6}\},\{r_{1}',r_{4},r_{5},\overline{x}\}\notin \mathcal{S}_{\bfx,r_{1}}$, an application of Lemmata \ref{lem32} and \ref{lem7.4} enabling one to derive that the preceding sum is $O(n^{-\delta_{g}})$. Similarly, we may deduce via Lemma \ref{lem7.2} that \begin{align*}\sum_{\substack{\bfx\in V_{r_{1}}}}&\prod_{i=4}^{2g+4}x_{i}^{-\alpha}\sum_{\substack{\bfy\in T_{n}\\ \bfr\subset\text{Set}(\bfy)\\ \{y_{1},y_{2},y_{3}\}\in \mathcal{C}_{\bfr,\bfx}(n)}}\mathbb{P}\big(\text{Set}(\bfy)\subset A \big|\ \text{Set}(\bfx),\bfr\subset A\big)\ll \sum_{\substack{\bfx\in V_{r_{1}}\\ \{y_{1},y_{2},y_{3}\}\in \mathcal{C}_{\bfr,\bfx}(n)}}\prod_{i=4}^{2g+4}x_{i}^{-\alpha},
\end{align*}
whence the use of Lemma \ref{lem7.6} allows one to deduce that the above line is $O( n^{-\delta_{g}})$. Combining the previous estimates we derive the desired result.
\end{proof}

We shall devote the last lines of this section to deliver suitable estimates for the expected value of yet another auxiliary random variable. For such purposes we recall (\ref{Tnre3}) and introduce for $\overline{\bfx},\bfr\subset\mathbb{N}$ with $\lvert \bfr\rvert\leq 2g+3$ the sets 
$$\mathcal{Z}^{1}_{n,\bfr,\overline{\bfx}}=\bigcup_{j=2}^{g+1}\Big\{\bfy\in T^{5}_{n,\bfr\cup \overline{\bfx}}:\ \ \  \{y_{2j+1},y_{2j+2}\}\cap\overline{\bfx}\neq \o ,\ \ \ \{y_{2j+1},y_{2j+2}\}\not\subset \bfr\cup  \overline{\bfx} \Big\},$$ and upon recalling (\ref{pata}) then $$\mathcal{Z}^{2}_{n,\bfr,\overline{\bfx}}=\Big\{\bfy\in T^{5}_{n,\bfr\cup \overline{\bfx}}:\ \ \  \{y_{1},y_{2},y_{3}\}\in \mathcal{C}_{\bfr,\overline{\bfx}}(n)\Big\},$$ and
\begin{equation}\label{NNN}\mathcal{Z}_{n,\bfr,\overline{\bfx}}=\mathcal{Z}^{1}_{n,\bfr,\overline{\bfx}}\cup\mathcal{Z}^{2}_{n,\bfr,\overline{\bfx}},\ \ \ \ \ \ \ \ \ \ \ \ \mathcal{Z}_{n,\bfr,\overline{\bfx}}(A)=\Big\{\bfy \in \mathcal{Z}_{n,\bfr,\overline{\bfx}}:\ \ \ \ \text{Set}(\bfy)\subset A\big\}\end{equation} for a sequence $A\subset\mathbb{N}$. Likewise, we introduce
\begin{equation}\label{Defi}\mathcal{Z'}_{n,\bfr,\overline{\bfx}}=\Big\{\bfy\in T^{5}_{n,\bfr\cup \overline{\bfx}}:\ \ \ \ \ \{y_{2j+1},y_{2j+2}\}\subset\overline{\bfx}\ \ \ \text{for some $2\leq j\leq g+1$} \Big\}.\end{equation}
\begin{lem}\label{cilas}
For $\bfr,\overline{\bfr}\subset\mathbb{N}\cap [1,n]$ with $\lvert \bfr\rvert\leq 2g+3$ and $g+2\leq \lvert\overline{\bfr}\rvert\leq 2g+3$ one may partition the set $T^{5}_{n,\overline{\bfr}}$ into disjoint sets $\mathcal{W}_{1,\overline{\bfr}}$ and $\mathcal{W}_{2,\overline{\bfr}}$ and such that $$\sum_{\substack{\bfx\in \mathcal{W}_{1,\overline{\bfr}}\\ \text{Set}(\bfx)=\bfx'\cup\overline{\bfr}\\ \bfx'\cap\overline{\bfr}=\o}}\mathbb{P}(\bfx'\subset A)\ll n^{-\delta_{g}}\ \ \ \ \ \ \ \ \text{and}\ \ \ \ \ \ \ \ \max_{\substack{\o\neq\overline{\bfx}\subset\text{Set}(\bfx)\\ \bfx\in\mathcal{W}_{2,\overline{\bfr}}}}\mathbb{E}\big(\lvert \mathcal{Z}_{n,\bfr,\overline{\bfx}}(A) \rvert \big|\bfr,\overline{\bfx}\subset A\big)\ll n^{-\delta_{g}}.$$ Likewise, when $\overline{\bfr'}\subset\mathbb{N}$ and $g+2\leq \lvert\overline{\bfr'}\rvert\leq 2g+3$ and for given $\bfx\in T_{n,\overline{\bfr}}^{5}$ then the tuples $\bfy\in T_{n,\overline{\bfr'}}^{5}$ with $\bfx'\cap \bfy'=\o$, where $\bfx',\bfy'$ are defined as above, have the property that there is some $\bfr_{1}\subset \mathbb{N}$ and some $\overline{\bfx}\subset \bfx'\cup \bfy'$ for which $\bfr_{1}\cap \overline{\bfx}=\o$ and $\mathcal{Z'}_{n,\bfr_{1},\overline{\bfx}}\neq \o$ one has that either
\begin{equation}\label{ggg}\sum_{\substack{\bfx\in T_{n,\overline{\bfr}}^{5}\\ \text{Set}(\bfx)=\bfx'\cup\overline{\bfr}\\ \bfx'\cap\overline{\bfr}=\o}}\mathbb{P}(\bfx'\subset A)\ll n^{-\delta_{g}}\  \ \ \ \ \ \text{or}\ \ \ \ \ \ \ \ \mathop{{\sum_{\substack{\bfy\in T_{n,\overline{\bfr'}}^{5}\\ \text{Set}(\bfy)=\bfy'\cup\overline{\bfr'}\\ \bfy'\cap\overline{\bfr'}=\o}}}^*}\mathbb{P}(\bfy'\subset A)\ll n^{-\delta_{g}},\end{equation} where in the second sum the tuples $\bfy$ satisfy the earlier described conditions. 
\end{lem}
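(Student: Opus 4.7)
The plan is to perform a case analysis based on the intersection structure of $\text{Set}(\bfx)$ with $\overline{\bfr}$, in the same spirit as the arguments in Lemma \ref{lem7.2}, Proposition \ref{propo7} and Proposition \ref{cila}. The key structural input recorded just before the statement is that for $\bfx\in T^5_{n,\overline{\bfr}}$ with $\lvert\overline{\bfr}\rvert\geq g+2$, one has $\{x_1,x_2,x_3\}\subset\overline{\bfr}$ and for each $2\leq j\leq g+1$ at least one of $x_{2j+1},x_{2j+2}$ lies in $\overline{\bfr}$. In particular $\bfx'=\text{Set}(\bfx)\setminus\overline{\bfr}$ has cardinality bounded by $2g+4-\lvert\overline{\bfr}\rvert$, so that $\bfx$ is already very nearly determined by $\overline{\bfr}$. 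Each element of $\bfx'$ contributes a factor $x^{-\alpha}$ to $\mathbb{P}(\bfx'\subset A)$, and the linear relations $x_1+x_4=x_{2l-1}+x_{2l}$ combined with the congruence conditions (\ref{modu}) control how free the remaining coordinates can be.

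To construct the partition I would declare $\bfx\in\mathcal{W}_{1,\overline{\bfr}}$ whenever $\bfx'$ contains sufficiently many degrees of freedom that summing $\mathbb{P}(\bfx'\subset A)$ via the same machinery used for Lemma \ref{lem7.2} (i.e.\ repeated invocations of Lemma \ref{lem1.1} together with the identity $(1-2\alpha)(g+1)=-\alpha$ from (\ref{esti})) produces a bound of the form $n^{-\delta_g}$. Place the remaining tuples, i.e.\ those $\bfx$ whose new elements $\bfx'$ are too few to yield such a saving, in $\mathcal{W}_{2,\overline{\bfr}}$. For these, $\bfx$ is essentially determined by $\overline{\bfr}$, and hence so is any $\overline{\bfx}\subset\text{Set}(\bfx)$. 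One then bounds $\mathbb{E}(\lvert\mathcal{Z}_{n,\bfr,\overline{\bfx}}(A)\rvert\mid \bfr,\overline{\bfx}\subset A)$ by splitting $\mathcal{Z}=\mathcal{Z}^1\cup\mathcal{Z}^2$ exactly as in Propositions \ref{propo7} and \ref{cila}: the condition $\{y_{2j+1},y_{2j+2}\}\cap\overline{\bfx}\neq\o$ with $\{y_{2j+1},y_{2j+2}\}\not\subset\bfr\cup\overline{\bfx}$ defining $\mathcal{Z}^1$ produces a linear form in a new variable of the shape handled by Lemma \ref{lem32} and Lemma \ref{lem7.4}, while $\mathcal{Z}^2$ is dispatched by Lemma \ref{lem7.6} (and Lemma \ref{lem7.5} in residual cases). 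The power savings assembled in these lemmata yield the required $n^{-\delta_g}$.

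For the second statement, suppose that $\bfx\in T^5_{n,\overline{\bfr}}$ and $\bfy\in T^5_{n,\overline{\bfr'}}$ satisfy $\bfx'\cap\bfy'=\o$ and that there exist $\bfr_1$ and $\overline{\bfx}\subset\bfx'\cup\bfy'$ with $\bfr_1\cap\overline{\bfx}=\o$ and $\mathcal{Z}'_{n,\bfr_1,\overline{\bfx}}\neq\o$. Unpacking \eqref{Defi}, this means that some pair $\{z_{2j+1},z_{2j+2}\}$ belonging to a tuple in $T^5_{n,\bfr_1\cup\overline{\bfx}}$ lies entirely inside $\overline{\bfx}\subset\bfx'\cup\bfy'$, thereby imposing an additional linear equation $r+z_{2j+1}=z_{2j+2}+z'$ (for some $r\in\bfr_1$) on elements of $\bfx'\cup\bfy'$. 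Distinguish three subcases according to whether this pair lies wholly in $\bfx'$, wholly in $\bfy'$, or straddles both. In the first subcase the added constraint forces the sum over $\bfx\in T^5_{n,\overline{\bfr}}$ into a configuration whose $\bfx'$ admits a power saving via Lemma \ref{lem1.1} and (\ref{esti}); symmetrically the second subcase handles $\bfy$. In the straddling case, one observes that the shared element yields a new linear dependency between $\bfx$ and $\bfy$, and applying Lemma \ref{lem1.1} to whichever of $\bfx$ or $\bfy$ retains more free variables gives the requisite saving, whence (\ref{ggg}) follows.

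The main obstacle is the bookkeeping in the second part: selecting the threshold in the definition of $\mathcal{W}_{1,\overline{\bfr}}$ versus $\mathcal{W}_{2,\overline{\bfr}}$ so that both estimates hold simultaneously, and then verifying in the pair argument that the enumeration of how the pair $\{z_{2j+1},z_{2j+2}\}$ sits inside $\bfx'\cup\bfy'$ exhausts all the configurations compatible with the congruence proviso (\ref{modu}). The boundary case $\lvert\overline{\bfr}\rvert=g+2$ (where $\bfx'$ is as large as permitted) is the delicate one, as it is there that the direct probability sum barely fails and one must instead extract the saving from the constraints on $\mathcal{Z}_{n,\bfr,\overline{\bfx}}(A)$; the remaining cases are progressively easier because $\bfx$ becomes more rigidly determined by $\overline{\bfr}$.
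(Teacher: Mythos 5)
Your first half does not go through as written. The operative dichotomy cannot be ``enough degrees of freedom'': for a fixed $\overline{\bfr}$ with $g+2\leq\lvert\overline{\bfr}\rvert\leq 2g+3$, a tuple of $T^{5}_{n,\overline{\bfr}}$ is determined by a single free coordinate (essentially $x_{4}$, all other new entries being $x_{4}+(r_{1}-r_{2l+1})$), or is unique altogether, so your rule does not yield a genuine partition and, more importantly, it cannot deliver the $\mathcal{W}_{2}$ estimate. Note that Lemma \ref{lem7.2} only gives $\mathbb{E}\big(\lvert T^{5}_{n,\bfr}(A)\rvert\,\big|\,\bfr\subset A\big)\ll 1$, and $\mathcal{Z}^{1}_{n,\bfr,\overline{\bfx}}\subset T^{5}_{n,\bfr\cup\overline{\bfx}}$, so no generic appeal to the earlier lemmata can produce $n^{-\delta_{g}}$ here; the lemmata you cite (Lemmata \ref{lem32}, \ref{lem7.4}, \ref{lem7.6}) treat double sums carrying an extra averaging $\prod_{i}x_{i}^{-\alpha}$ over $\bfx\in T_{n}^{ind}$ or $V_{r_{1}}$, which is exactly what is unavailable once $\overline{\bfx}$ is fixed — that is the whole point of the maximum in the statement. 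The paper's partition is by \emph{size}, not by freedom: after first disposing of those $\overline{\bfr}$ having a gap $r_{1}-r_{2l+1}\geq n^{\beta_{1}}$ (Lemma \ref{lem1.1} then bounds the full sum by $n^{(1-2\alpha)\beta_{1}}$), one sets $\mathcal{W}_{1,\overline{\bfr}}$ to be the tuples with some element of $\bfx'$ exceeding $n^{\beta}$ for a fixed $0<\beta_1<\beta<\varepsilon$, handled by the tail bound $\sum_{x>n^{\beta}}x^{-2\alpha}\ll n^{(1-2\alpha)\beta}$ as in (\ref{raul}) and (\ref{alp}); for $\bfx\in\mathcal{W}_{2,\overline{\bfr}}$ every element of $\bfx'$ is $\leq n^{\beta}$, and the saving for $\mathcal{Z}_{n,\bfr,\overline{\bfx}}(A)$ comes precisely from the gap this creates: the triple of any counted tuple lies in $R_{n}$, so its entries are $\gg n^{\varepsilon}$ while $\overline{x}\leq n^{\beta}$, whence Lemma \ref{lem1.1} gives $(r_{1}'-\overline{x})^{1-2\alpha}\ll n^{-\delta_{g}}$. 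Without the size threshold, $\overline{x}$ could lie within $O(1)$ of $r_{1}'$ and the bound degenerates to $O(1)$.

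The second half has the same defect. Your ``additional linear equation'' mechanism is not justified: when the pair $\{z_{2j+1},z_{2j+2}\}$ lies wholly in $\bfx'$ (or wholly in $\bfy'$), no extra relation is imposed on $\bfx$ at all, since the relation $z_{1}+z_{4}=z_{2j+1}+z_{2j+2}$ of the $\mathcal{Z}'$-tuple involves its own free coordinate $z_{4}$ and merely asserts nonemptiness; and even a genuine extra linear relation among the coordinates of $\bfx'$ would not by itself produce a power saving, because the unconstrained sum over $T^{5}_{n,\overline{\bfr}}$ is already only $O(1)$ and the new coordinates differ from $x_{4}$ by fixed shifts. What actually yields (\ref{ggg}) in the paper is, once more, size: by (\ref{Defi}) the pair sums to $z_{1}+z_{4}\gg n^{\varepsilon}$ because $z_{1}$ belongs to a triple of $R_{n}$, so one of $z_{2j+1},z_{2j+2}$ is $\gg n^{\varepsilon}$ and lies in $\bfx'$ or $\bfy'$; restricting the corresponding sum to tuples possessing a coordinate $\gg n^{\varepsilon}$ and applying Lemma \ref{lem1.1} as in (\ref{raul}) and (\ref{alp}) gives $n^{-\delta_{g}}$. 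In short, both halves of your proposal replace the crucial size dichotomy (small new elements versus large ones, measured against the $n^{\varepsilon}$ scale of $R_{n}$) by a degrees-of-freedom heuristic that does not produce the claimed savings.
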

\begin{proof}
We first observe that whenever $g+2\leq\lvert\overline{\bfr}\rvert\leq g+3$ then if $\bfx\in T_{n,\overline{\bfr}}^{5}$ one may assume in view of (\ref{Tnr3n}) and the discussion after it that $\bfx=(r_{1},r_{2},r_{3},x_{4},\ldots,r_{2g+3},x_{2g+4})$ with \begin{equation}\label{ben}r_{1}+x_{4}=r_{5}+x_{6}=\ldots=r_{2g+3}+x_{2g+4},\ \ \ \ \ \ \ \ \ \ \ r_{1}\geq r_{2l+1},\ \ \ 2\leq l\leq g+1,\end{equation} and $r_{i}\in \overline{\bfr},$ the case $\lvert\overline{\bfr}\rvert=g+2$ encompassing the instance when $r_{k}=r_{2f+1}$ for some $1\leq k\leq 3$ and $2\leq f\leq g+1$. We then note for fixed $0<\beta_{1}<\beta<\varepsilon$ that whenever $r_{1}-r_{2l+1}\geq n^{\beta_{1}}$ for some $2\leq l\leq g+1$ then an application of Lemma \ref{lem1.1} permits one to deduce that\begin{equation*}\sum_{\substack{\bfx\in T_{n,\overline{\bfr}}^{5}\\ \text{Set}(\bfx)=\bfx'\cup\overline{\bfr}\\ \bfx'\cap\overline{\bfr}=\o}}\mathbb{P}(\bfx'\subset A)\ll \mathop{{\sum_{x_{4},\ldots,x_{2g+4}}}^*}\Big(\prod_{j=2}^{g+2}x_{2j}^{-\alpha}\Big)\ll (r_{1}-r_{2l+1})^{(1-2\alpha)}\ll n^{(1-2\alpha)\beta_{1}},\end{equation*} wherein the above sum runs over tuples satisfying (\ref{ben}), in which case it would transpire that the choice $ \mathcal{W}_{1,\overline{\bfr}}=T_{n,\overline{\bfr}}^{5}$ would yield the lemma. If the above does not hold then by taking \begin{equation}\label{choice}\mathcal{W}_{1,\overline{\bfr}}=\Big\{\bfx \in T_{n,\overline{\bfr}}^{5}:\ \ \ \ x_{i}> n^{\beta}\ \text{for some $x_{i}\in\text{Set}(\bfx)\setminus \overline{\bfr}$} \Big\},\ \ \ \ \ \ \mathcal{W}_{2,\overline{\bfr}}=\mathcal{W}_{n,\overline{\bfr}}\setminus \mathcal{W}_{1,\overline{\bfr}},\end{equation} it would then follow after subsequent applications of Lemma \ref{lem1.1} that
\begin{align}\label{raul}\sum_{\substack{\bfx\in \mathcal{W}_{1,\overline{\bfr}}\\ \text{Set}(\bfx)=\bfx'\cup\overline{\bfr}\\ \bfx'\cap\overline{\bfr}=\o}}\mathbb{P}(\bfx'\subset A)\ll \mathop{{\sum_{\substack{x_{4},\ldots,x_{2g+4}\\ x_{2l}>n^{\beta}}}}^*}\Big(\prod_{j=2}^{g+2}x_{2j}^{-\alpha}\Big)&\ll  \sum_{x_{2l}> n^{\beta}}x_{2l}^{-\alpha}\big(x_{2l}-(r_{1}-r_{2l+1})\big)^{-\alpha}\nonumber
\\
&\ll  \sum_{x_{2l}> n^{\beta}}x_{2l}^{-2\alpha}\ll n^{(1-2\alpha)\beta}.\end{align}
Moreover, if $\overline{\bfx}\subset \text{Set}(\bfx)$ for $\bfx\in\mathcal{W}_{2,\overline{\bfr}}$ then it is apparent by Lemma \ref{lem1.1} that 
$$\mathbb{E}\big( \lvert \mathcal{Z}_{n,\bfr,\overline{\bfx}}(A)\rvert \big|\bfr,\overline{\bfx}\subset A\big)\ll (r_{1}'-\overline{x})^{-\alpha}+\sum_{\substack{r_{1}'+y_{4}=y_{5}+\overline{x}\\ \overline{x}\in\overline{\bfx}}}(y_{4}y_{5})^{-\alpha}\ll (r_{1}'-\overline{x})^{1-2\alpha}$$ for some $r_{1}'\in\bfr$, wherein we utilised the fact for the implicit triple $\{r_{1}',r_{2}',r_{3}'\}\in R_{n}$ that $\overline{x}\notin \{r_{1}',r_{2}',r_{3}'\}$ since $r_{1}',r_{2}',r_{3}'\gg n^{\varepsilon}$ but $\overline{x}\leq n^{\beta}.$ It therefore transpires that 
\begin{equation}\label{asp}\mathbb{E}\big( \lvert\mathcal{Z}_{n,\bfr,\overline{\bfx}}(A)\rvert \big|\bfr,\overline{\bfx}\subset A\big)\ll n^{-\delta_{g}}.\end{equation}

If on the contrary $g+4\leq\lvert\overline{\bfr}\rvert\leq 2g+3$ or $\lvert\overline{\bfr}\rvert=g+3$ but $\bfx$ is not as in (\ref{ben}) then in view of the definition of $T_{n,\overline{\bfr}}^{5}$ in (\ref{Tnr3n}) one would have $\lvert T_{n,\overline{\bfr}}^{5}\rvert=1$. The choice (\ref{choice}) then would deliver the estimate 
\begin{equation}\label{alp}\sum_{\substack{\bfx\in \mathcal{W}_{1,\overline{\bfr}}\\ \text{Set}(\bfx)=\bfx'\cup\overline{\bfr}\\ \bfx'\cap\overline{\bfr}=\o}}\mathbb{P}(\bfx'\subset A)\ll n^{-\beta\alpha},\end{equation} and the same argument as above would yield (\ref{asp}).

For the second estimate in the lemma it just suffices to observe that (\ref{Defi}) entails the existence of some $2\leq j\leq g+1$ for which $z_{2j+1}+z_{2j+2}\gg n^{\varepsilon}$ with $z_{2j+1},z_{2j+2}\in\overline{\bfx}$, wherein the previous summands are components of the underlying vector $\bfz$ counted by $\mathcal{Z}'_{n,\bfr_{1},\overline{\bfx}}$, whence in view of the fact that $\overline{\bfx}\subset\bfx'\cup\bfy'$ it is then apparent that there is some $x_{i}$ or $y_{i}$ in the underlying sums for (\ref{ggg}), say $x_{2g+4}$ without loss of generality, for which $x_{2g+4}\gg n^{\varepsilon}.$ The same argument as in (\ref{raul}) and (\ref{alp}) in conjunction with subsequent applications of Lemma \ref{lem1.1} enables one to derive (\ref{ggg}), as desired.
\end{proof}

\section{The upper tail}\label{sec9}
We shall devote the present section to derive a concentration type result to estimate the probability that $\lvert T_{n}(A)\rvert$ significantly exceeds its mean value via the analysis of a suitable moment. For such purposes we take $m=\big\lceil \log n\big\rceil$ and consider for further convenience
\begin{equation*}\lvert T_{n}(A)\rvert^{m}=\sum_{\bfx_{1}\in T_{n}}\leavevmode\hbox{$1\!\rm I$}_{A}(\bfx_{1})\sum_{\bfx_{2}\in T_{n}}\leavevmode\hbox{$1\!\rm I$}_{A}(\bfx_{2})\dots \sum_{\bfx_{m}\in T_{n}}\leavevmode\hbox{$1\!\rm I$}_{A}(\bfx_{m}),\end{equation*} whence taking expectations then delivers
\begin{equation}\label{EET}\mathbb{E}\big(\lvert T_{n}(A)\rvert^{m}\big)=\sum_{\bfx_{1}\in T_{n}}\sum_{\bfx_{2}\in T_{n}}\dots \sum_{\bfx_{i}\in T_{n}}\dots\sum_{\bfx_{m}\in T_{n}}\mathbb{P}(\bfx_{1},\ldots,\bfx_{m}\in A).\end{equation}
It seems then worth introducing $$\mathcal{B}=\{B\subset [1,m]\times [1,2g+4]:\ \ \ \ \ \ \lvert B\rvert\leq 2g+4\}$$ and taking the set of functions \begin{equation}\label{phi}\Phi=\Big\{\varphi:[1,m]\rightarrow  \mathcal{B},\ \ \ \ \ \ \ (v_{1},v_{2})\in\varphi(i)\Longrightarrow v_{1}<i\ \text{for every $i\in[1,m]$}\Big\}.\end{equation}
We draw the reader's attention to (\ref{EET}) and observe that one may write \begin{equation}\label{kala}\mathbb{E}\big(\lvert T_{n}(A)\rvert^{m}\big)\ll \sum_{\varphi\in \Phi}  E_{\varphi}, \end{equation} where upon writing every tuple $\bfx_{p}$ with $p\leq m$ as $\bfx_{p}=(x_{q,p})_{1\leq q\leq 2g+4}$ then
\begin{equation}\label{alat}E_{\varphi}=\sum_{\bfx_{1}\in T_{n}}\mathbb{P}(\text{Set}(\bfx_{1})\in A)\cdots\mathop{{\sum_{\substack{\bfx_{i}\in T_{n}\\ \text{Set}(\bfx_{i})=\bfr_{i}\cup \bfx_{i}'\\ \bfr_{i}\cap \bfx_{i}'=\o}}}^*}\mathbb{P}(\bfx_{i}'\in A) \cdots\mathop{{\sum_{\substack{\bfx_{m}\in T_{n}\\ \text{Set}(\bfx_{m})=\bfr_{m}\cup \bfx_{m}'\\ \bfr_{m}\cap \bfx_{m}'=\o}}}^*}\mathbb{P}(\bfx_{m}'\in A)\end{equation} 
where $\bfr_{i}=\{x_{v}:\ \ \ v\in\varphi(i)\}$ and $\bfx_{i}'\cap\text{Set}(\bfx_{j})=\o$ for every $j<i$, the set $\bfx_{i}'$ being uniquely determined by $\bfr_{i}$ and $\bfx_{i}$. We also introduce for $1\leq i\leq m$ with $\bfr_{i}\neq \o$ the set
$$\Gamma_{i}=\Big\{(\mathcal{A}_{i},\rho_{i}):\ \ \ \ \mathcal{A}_{i}\subset [1,2g+4],\ \ \ \lvert \mathcal{A}_{i}\rvert\geq \lvert\bfr_{i}\rvert,\ \ \ \rho_{i}:\mathcal{A}_{i}\rightarrow\bfr_{i}\ \ \text{surjective}\Big\}.$$ 

We then present 
\begin{equation}\label{YYY}\Lambda^{\varphi}=\Big\{  \Lambda=(\lambda_{i})_{i\leq m}:\ \ \lambda_{i}= (\mathcal{A}_{i},\rho_{i})\in  \Gamma_{i}\ \text{when $\bfr_{i}\neq \o$},\ \ \ \ \ \ \lambda_{i}=\varepsilon_{i}\in\{0,1\}\ \text{if $\bfr_{i}=\o$}        \Big\},\end{equation}
 and write each individual sum with $\bfr_{i}\neq \o$ as
$$\sum_{\substack{\bfx_{i}\in T_{n}\\ \text{Set}(\bfx_{i})=\bfr_{i}\cup \bfx_{i}'\\ \bfr_{i}\cap \bfx_{i}'=\o}}\mathbb{P}(\bfx_{i}'\in A) =\sum_{\substack{\mathcal{A}_{i}\subset [1,\ldots,2g+4],\\ \lvert \mathcal{A}_{i}\rvert\geq \lvert\bfr_{i}\rvert\\ \rho_{i}:\mathcal{A}_{i}\rightarrow\bfr_{i}\ \text{surj}}}\sum_{\substack{\bfx_{i}\in T_{n}\\ x_{a,i}=\rho_{i}(a)\\ a\in\mathcal{A}_{i}}}\mathbb{P}(\bfx_{i}'\in A).$$ When $\bfr_{i}=\o$ we write $\bfx_{i}\in T_{n}^{0}$ to denote $\bfx_{i}\in T_{n}^{dep}$ and $\bfx_{i}\in T_{n}^{1}$ to denote that $\bfx_{i}\in T_{n}^{ind}$. Equipped with the above notation we may express for each $\varphi\in \Phi$ the sum $E_{\varphi}$ as
\begin{equation}\label{RRRR}  E_{\varphi}=\sum_{\substack{\Lambda\in \Lambda^{\varphi}}}E_{\varphi,\Lambda}\end{equation}
with 
\begin{equation}\label{list}E_{\varphi,\Lambda}=\sum_{\substack{\bfx_{1}\in T_{n}^{\varepsilon_{1}}\\ \text{Set}(\bfx_{1})=\bfx_{1}'}}\mathbb{P}(\bfx_{1}'\in A)\cdots \sum_{\substack{\bfx_{i}\in T_{n}\\ x_{a,i}=\rho_{i}(a)\\ a\in\mathcal{A}_{i}}}\mathbb{P}(\bfx_{i}'\in A)\cdots \sum_{\substack{\bfx_{m}\in T_{n}\\ x_{a,m}=\rho_{m}(a)\\ a\in\mathcal{A}_{m}}}\mathbb{P}(\bfx_{m}'\in A),\end{equation}
wherein $\bfx_{i}'$ is defined as above. In order to make further progress in the argument we shall classify the above sums running over $\bfx_{i}$ according to the intersection of $\text{Set}(\bfx_{i})$ with elements of tuples in preceding sums. We first recall (\ref{alej}), (\ref{Tnre3}), (\ref{Tnr3n}), (\ref{NNN}) and (\ref{Defi}), and denote by $\mathcal{K}'$ to the collection of sums running over either:

$i)$ Tuples $\bfx_{i}\in T_{n}^{dep}$ and such that $\text{Set}(\bfx_{i})=\bfx_{i}'$.

$ii)$ Tuples $\bfx_{i}\in T^{l}_{n,\bfr_{i}}$ for some $l\in\{0,1,4\}$.

$iii)$ Tuples that are one of the components of the pair $(\bfx_{i},\bfx_{j})$ for $i<j$ with $\bfx_{i}\in T_{n}^{ind}$ and $\bfx_{j}$ not satisfying the properties in $ii)$, for which $\text{Set}(\bfx_{i})=\bfx_{i}'$, that also satisfy the relation $\text{Set}(\bfx_{i})\neq\text{Set}(\bfx_{i})\cap \bfr_{j}\neq\o$, with the property that if \begin{equation}\label{sole}\text{Set}(\bfx_{i})\cap \bfr_{j}=\{x_{1,i},x_{2,i},x_{3,i}\}=\{x_{1,j},x_{2,j},x_{3,j}\},\end{equation} wherein we wrote $\bfx_{i}=(x_{1,i},\ldots,x_{2g+4,i})$ and $\bfx_{j}=(x_{1,j},\ldots,x_{2g+4,j})$, then $$\text{Set}(x_{4,j},\ldots,x_{2g+4,j})\cap \bfr_{j}\neq \o,$$ and such that there is no $i<j_{1}<j$ with $(\bfx_{i},\bfx_{j_{1}})$ satisfying the earlier exposed properties. The reader may observe that these cases are encoded by $X_{n,\bfr}(A)$ in (\ref{tama}). 

$iv)$ Tuples that are one of the components of the pair $(\bfx_{i},\bfx_{j})$ for $i<j$ not satisfying the conditions described in $iii)$ for which $\bfx_{i}\in T_{n,\bfr_{i}}^{5}$ with $\bfr_{i}=\{x_{1,i},x_{2,i},x_{3,i}\}$ and $\bfx_{j}\in T_{n,\bfr_{j}}^{5}$ satisfying $\o\neq\bfx_{i}'\cap \bfr_{j}$ with either $\bfx_{i}'\cap \bfr_{j}\neq \text{Set}(x_{4,j},\ldots,x_{2g+4,j})$ or $\{x_{1,j},x_{2,j},x_{3,j}\}\cap \bfx_{i}'\neq\o$. As above, the sum over such tuples should also satisfy that there is no $i<j_{1}<j$ with $\bfx_{j_{1}}$ having the earlier exposed properties of $\bfx_{j}$. The reader may note that these instances are encoded by means of $Y_{n,\bfr}(A)$ in (\ref{Bruja}).

$v)$ Tuples that are one of the components of the pair $(\bfx_{i},\bfx_{j})$ with $i<j$ for which $\bfx_{i}\in T_{n,\bfr_{i}}^{5}$ with $g+2\leq\lvert \bfr_{i}\rvert\leq 2g+3$ such that $\bfx_{j}\in \mathcal{Z}_{n,\bfr_{j}\setminus\overline{\bfx},\overline{\bfx}}$ and the subset $\o\neq\overline{\bfx}\subset \bfx_{i}'\cap \bfr_{j},$ with no $i<j_{1}<j$ with $\bfx_{j_{1}}$ satisfying the earlier exposed properties of $\bfx_{j}$.

$vi)$ Tuples that are one of the components of $(\bfx_{i},\bfx_{j},\bfx_{k})$ for $i<j<k$ such that $\bfx_{h}\in T_{n,\bfr_{h}}^{5}$ with $g+2\leq\lvert \bfr_{h}\rvert\leq 2g+3$ for $h=i,j$ with $\bfx_{k}\in \mathcal{Z'}_{n,\bfr_{k}\setminus\overline{\bfx},\overline{\bfx}}$ and the subset $\overline{\bfx}\subset \mathbb{N}$ satisfies $\o\neq\overline{\bfx}\subset \text{Set}(\bfx_{i}',\bfx_{j}')\cap \bfr_{k}$. Moreover, we also impose the condition that there is no $(\bfx_{i_{1}},\bfx_{j_{1}},\bfx_{k_{1}})$ satisfying the above properties for some $i_{1},j_{1}<k_{1}$ with $\{i,j\}\cap\{i_{1},j_{1}\}\neq \o$, say $i=i_{1}$, and either $j_{1}<j$ or if $j=j_{1}$ then $k_{1}<k$. The preceding condition assures that for either fixed $i$ or $j$  then the other positions of the components of the triple are determined.

In view of Lemmata \ref{lem7}, \ref{lem7.1}, \ref{lem7.2} and Propositions \ref{propo7}, \ref{cila} and \ref{cilas} it transpires that the contribution of the sums running over tuples described in $i),ii),v)$ and $vi)$ shall be $O(n^{-\delta_{g}})$, the contribution of the ones corresponding to $iii)$ and $iv)$ being $O(n^{-\delta_{g}})$ as well when the sums over tuples $\bfx_{i}$ are translated suitably as we shall discuss later in the argument.

We shall introduce as well the collection of sums $\mathcal{K}$ which do not belong to $\mathcal{K}'$ that run over tuples $\bfx_{i}\in T_{n}^{ind}$ satisfying $\text{Set}(\bfx_{i})=\bfx_{i}'$. We also denote by $\mathcal{K}_{1}$ to the collection of sums which do not belong to $\mathcal{K}'$ running over:

$i)$ Tuples $\bfx_{i}$ for which there exists some $j<i$ such that $\bfx_{j}\in \mathcal{K}$ satisfies the relation $\{x_{1,i},x_{2,i},x_{3,i}\}=\{x_{1,j},x_{2,j},x_{3,j}\}\in R_{n}$ with $\bfx_{i}$ having the property that $$\text{Set}(x_{4,i},\ldots,x_{2g+4,i})=\bfx_{i}'.$$ Note that in the preceding lines $\bfx_{j}\in \mathcal{K}$ means that the corresponding sum running over such tuples is in $\mathcal{K}$, such an abuse of notation being often employed henceforth.

$ii)$ Tuples $\bfx_{i}$ with $\text{Set}(\bfx_{i})=\text{Set}(\bfx_{j})$ for some $j<i$ with either $\bfx_{j}$ being as in $i)$ in the above lines or $\bfx_{j}\in\mathcal{K}$.

Lastly we introduce the collection $\mathcal{K'}_{1}$ of sums not in $\mathcal{K}'$ and running over:

$i)$ Tuples $\bfx_{i} \in T_{n,\bfr_{i}}^{5}$ having the property that $$ \bfr_{i}\subset \bigcup_{k=1}^{k_{0}}\bfx_{j_{k}}'$$
with $\bfx_{j_{k}}\in \mathcal{K'}$ for $j_{k}<i$ and $3\leq k_{0}\leq 2g+4$.

$ii)$ Tuples $\bfx_{i}\in T_{n,\bfr_{i}}^{5}$ with $x_{k,i}\in\bfx_{j_{k}}'$ for $\bfx_{j_{k}}\in \mathcal{K'}$ and $j_{k}<i$ for $1\leq k\leq 3$, and satisfying $$\text{Set}(x_{4,i},\ldots,x_{2g+4,i})=\text{Set}(x_{4,l},\ldots,x_{2g+4,l})$$ for some $\bfx_{l}\in\mathcal{K}_{1}$ with $l<i$.

$iii)$ Tuples $\bfx_{i} \in T_{n,\bfr_{i}}^{5}$ with $x_{k,i}\in\bfx_{j_{k}}'$ for $\bfx_{j_{k}}\in \mathcal{K'}$ and $j_{k}<i$ for $1\leq k\leq 3$, and having the property that either $x_{2l+1}\in \bfx_{i_{l}}'$ or $x_{2l+2}\in\bfx_{i_{l}}'$ for $2\leq l\leq g+1$ with $\bfx_{i_{l}}\in\mathcal{K'}$ and $i_{l}<i$ with either $x_{4}\in \bfx_{i_{1}}'$ or $\{x_{2l_{0}+1},x_{2l_{0}+2}\}\subset\bfx_{i_{l_{0}}}'\cup\bfx_{i_{1}}'$ for some $i_{1}<i$ and some $2\leq l_{0}\leq g+1$.

\begin{lem}\label{leem}
The preceding list covers all possible cases for the sums over tuples $\bfx_{i}$ in (\ref{list}) according to the intersection of $\text{Set}(\bfx_{i})$ with elements of tuples in preceding sums.
\end{lem}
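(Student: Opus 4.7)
The plan is an exhaustive case analysis first stratifying each sum in (\ref{list}) by the local structure of $\bfx_{i}$ and by how its coordinates intersect $\bfr_{i}$, and then, for the residual cases in which the first stratification is inconclusive, by the pattern in which the coordinates of $\bfx_{i}$ are matched to coordinates of previously-summed tuples $\bfx_{j}$ with $j<i$.

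First I would dispose of the cases classifiable from the pair $(\bfx_{i},\bfr_{i})$ alone. If $\bfx_{i}\in T_{n}^{dep}$ with $\text{Set}(\bfx_{i})=\bfx_{i}'$, which forces $\bfr_{i}=\o$, the sum falls into $\mathcal{K}'$ case $i)$. If $\bfx_{i}\in T_{n}^{ind}$ with $\bfr_{i}=\o$, the sum is in $\mathcal{K}$. For $\bfr_{i}\neq \o$, I would split on the cardinality $\lvert\{x_{1,i},x_{2,i},x_{3,i}\}\cap\bfr_{i}\rvert\in\{0,1,2,3\}$: the values $0,1,2$ place $\bfx_{i}$ into $T_{n,\bfr_{i}}^{0}\cup T_{n,\bfr_{i}}^{1}$, handled by $\mathcal{K}'$ case $ii)$; the value $3$ places $\bfx_{i}$ into $T_{n,\bfr_{i}}^{3}=T_{n,\bfr_{i}}^{4}\cup T_{n,\bfr_{i}}^{5}$, the first piece again being in $\mathcal{K}'$ case $ii)$.

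The substantive task is the classification of $\bfx_{i}\in T_{n,\bfr_{i}}^{5}$. For such $\bfx_{i}$ I would identify the earliest index $j<i$ whose tuple $\bfx_{j}$ is responsible for forcing the three initial coordinates of $\bfx_{i}$ into $\bfr_{i}$, and examine the intersection pattern of $\text{Set}(\bfx_{i})$ with $\text{Set}(\bfx_{j})$, and if necessary with a second earlier tuple $\bfx_{k}$. If the pattern matches one of the configurations defining $\mathcal{K}'$ cases $iii)-iv)$ (one additional tuple, depending on whether $\bfx_{j}\in T_{n}^{ind}$ or $\bfx_{j}\in T_{n,\bfr_{j}}^{5}$) or $v)-vi)$ (two additional tuples in $T_{n,\bfr}^{5}$ with $\lvert\bfr\rvert\geq g+2$), the sum already lives in $\mathcal{K}'$. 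Otherwise, the negations of those configurations combined with the minimality of $j$ force each coordinate of $\bfx_{i}$ to be inherited from preceding sums in tightly constrained ways: when the ancestor lies in $\mathcal{K}$, these are precisely the configurations enumerated by $\mathcal{K}_{1}$ cases $i)-ii)$; when the ancestors lie in $\mathcal{K}'$ or $\mathcal{K}_{1}$, these are exactly the configurations enumerated by $\mathcal{K'}_{1}$ cases $i)-iii)$.

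The main obstacle is purely combinatorial bookkeeping: one must confirm that the negations of the minimality and structural clauses in $\mathcal{K}'$ cases $iii)-vi)$, combined with the constraints stemming from the linear chain in (\ref{jood}) and the congruence conditions (\ref{modu}), precisely carve out the configurations listed in $\mathcal{K}_{1}$ and $\mathcal{K'}_{1}$, with no gaps. The congruence conditions (\ref{modu}) are decisive here: they forbid identifications between coordinates of different parities in the chain, dramatically restricting which matchings of the tail coordinates $x_{4,i},\ldots,x_{2g+4,i}$ to previously-summed elements are actually admissible, and thereby ensuring that every surviving case falls under one of the enumerated classes.
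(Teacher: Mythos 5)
Your opening reduction coincides with the paper's: sums whose type is determined by $(\bfx_{i},\bfr_{i})$ alone (the $T_{n}^{dep}$ and $T_{n}^{ind}$ cases with no inherited coordinates, and $\bfx_{i}\in T^{l}_{n,\bfr_{i}}$ for $l\in\{0,1,4\}$) are absorbed by $\mathcal{K}$ and $\mathcal{K}'$, and the substantive issue is the residual class $\bfx_{i}\in T^{5}_{n,\bfr_{i}}$. From that point on, however, your argument only asserts the conclusion: you state that the negations of the clauses in cases $iii)$--$vi)$ of $\mathcal{K}'$ force the inherited coordinates into configurations that ``are precisely'' those of $\mathcal{K}_{1}$ and $\mathcal{K}'_{1}$, and you then explicitly defer the verification as ``combinatorial bookkeeping''. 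That verification \emph{is} the lemma; without it the proposal restates the statement rather than proving it.

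What is needed (and what the paper's proof supplies) is the following chain. For $\bfx_{i}\in T^{5}_{n,\bfr_{i}}$ the remark after (\ref{Tnr3n}) gives the dichotomy $\lvert\bfr_{i}\rvert=3$ or $g+2\leq \lvert\bfr_{i}\rvert\leq 2g+4$. In the first case one asks whether some element of $\bfr_{i}$ is inherited from an earlier tuple \emph{not} in $\mathcal{K}'$; if not, one is in case $i)$ of $\mathcal{K}'_{1}$, and if so, a split on the size of that ancestor's inherited set (empty; of size $1$ or $2$, impossible since the ancestor would then lie in $\mathcal{K}'$; of size $3$; of size at least $g+2$) lands one in case $i)$ of $\mathcal{K}_{1}$, or in cases $iv)$ or $v)$ of $\mathcal{K}'$, using (\ref{sole}) to pin down the intersection when the ancestor has no inherited coordinates. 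In the second case the decisive step, absent from your sketch, is a pair-by-pair analysis of the tail coordinates: if $x_{2l,i}$ is inherited from a non-$\mathcal{K}'$ ancestor with inherited set of size at least $g+2$, then its partner $x_{2l-1,i}$ must already lie in $\bfr_{i}$, for otherwise $\bfx_{i}$ belongs to a set $\mathcal{Z}_{n,\bfr_{i}\setminus\overline{\bfx},\overline{\bfx}}$ and case $v)$ applies; if the partner is inherited from a second such ancestor one lands in $\mathcal{Z}'$ and case $vi)$; and the remaining possibilities must be sorted into case $ii)$ of $\mathcal{K}_{1}$ and cases $i)$--$iii)$ of $\mathcal{K}'_{1}$, with the extra structural clause in $iii)$ (either $x_{4}\in\bfr_{i}$ or a full pair $\{x_{2l_{1}+1},x_{2l_{1}+2}\}\subset\bfr_{i}$) itself deduced from the shape of $T^{5}$ tuples. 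None of this appears in your proposal. In addition, your stratification by ``the earliest index $j<i$ responsible for forcing the three initial coordinates of $\bfx_{i}$ into $\bfr_{i}$'' is not well defined, since those coordinates may be inherited from three different earlier tuples, and your parenthetical description of cases $v)$--$vi)$ misstates their shape (case $v)$ involves one auxiliary tuple with large inherited set, not two). So the proposal has the right skeleton --- essentially the paper's --- but the exhaustiveness check that constitutes the lemma is missing.
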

\begin{proof}
We may assume that $\bfx_{j}$ is not in $\mathcal{K}'$ or $\mathcal{K}$, and hence $\bfx_{j}\in T_{n,\bfr_{j}}^{5},$ since otherwise the remaining cases would belong to the aforementioned collections, with either $\lvert\bfr_{j}\rvert=3$ or $g+2\leq \lvert\bfr_{j}\rvert\leq 2g+4$ in view of the discussion after (\ref{Tnr3n}). In the first instance then $\{x_{1,j},x_{2,j},x_{3,j}\}= \bfr_{j},$ and we may suppose that one of the three elements, say $x_{1,j}$, has the property that $x_{1,j}\in\bfx_{i}'$ for some $i<j$ with $\bfx_{i}\notin \mathcal{K'}$ since otherwise that would correspond to the case $i)$ cognate to $\mathcal{K}_{1}'$. We note that if $\bfx_{i}\in T_{n,\bfr_{i}}^{5}$ for $g+2\leq \lvert \bfr_{i}\rvert\leq 2g+3$ then such an instance would be encompassed by $v)$ of $\mathcal{K'}$, since if there was $i<j_{1}<j$ with $(\bfx_{i},\bfx_{j_{1}})$ satisfying $v)$ then $\bfx_{i}\in\mathcal{K}'$, and if on the contrary $\lvert \bfr_{i}\rvert=3$ one would instead be in the case $iv)$ pertaining to $\mathcal{K'}$.  Since whenever $1\leq \lvert\bfr_{i}\rvert\leq 2$ one would have $\bfx_{i}\in T_{n,\bfr_{i}}^{l}$ for $l=0,1$ and hence $\bfx_{i}\in\mathcal{K}'$, it therefore would transpire that $\bfr_{i}=\o$. This would then entail in view of $iii)$ in (\ref{sole}) that $\text{Set}(\bfx_{i})\cap\text{Set}(\bfx_{j})=\{x_{1,i},x_{2,i},x_{3,i}\}=\{x_{1,j},x_{2,j},x_{3,j}\}$ with $\bfx_{j}'=\{x_{4,j},\ldots,x_{2g+4,j}\}$, such a situation being described in $i)$ of $\mathcal{K}_{1}$. 

When on the contrary $g+2\leq \lvert\bfr_{j}\rvert\leq 2g+4$ then we may assume that there is some $i<j$ with $\bfx_{i}\notin \mathcal{K'}$ and $\bfx_{i}'\cap\bfr_{j}\neq \o$ since otherwise one would be in the case $i)$ of $\mathcal{K}_{1}'$. The instance $x_{1,j}\in\bfx_{i}'\cap\bfr_{j}$ and similarly for $x_{2,j}$ and $x_{3,j}$, would follow by a similar argument as above save the instance $\bfr_{i}=\o$, in which case one would have $\text{Set}(\bfx_{i})=\text{Set}(\bfx_{j})$ as in $ii)$ of $\mathcal{K}_{1}$. We may therefore suppose that $x_{k,j}\in\bfx_{j_{k}}'$ for $1\leq k\leq 3$ and $j_{k}<j$ with $\bfx_{j}\in\mathcal{K}'$, and assume without loss of generality that $x_{2l,j}\in\bfx_{i}'\cap\bfr_{j}$ for some $2\leq l\leq g+2$ (and similarly for odd indexes) with $\bfx_{i}\notin \mathcal{K'}$. One may utilise the preceding argument to discard the instance $\bfr_{i}=\o$, the cases $1\leq \lvert\bfr_{i}\rvert\leq 2$ entailing $\bfx_{i}\in \mathcal{K'}$ which would contradict our hypothesis. If $\lvert \bfr_{i}\rvert=3$ then in view of $iv)$ after (\ref{sole}) and the remarks after (\ref{Tnr3n}), one would then have that $\text{Set}(x_{4,i},\ldots,x_{2g+4,i})\cap \text{Set}(\bfx_{j})= \text{Set}(x_{4,j},\ldots,x_{2g+4,j})$, such an instance being encompassed in $ii)$ cognate to $\mathcal{K}_{1}'$. When instead $g+2\leq\lvert \bfr_{i}\rvert\leq 2g+3$ for all such $l$ satisfying the aforementioned condition then one must necessarily have that $x_{2l-1,j}\in \bfr_{j}$ since otherwise $\bfx_{j}\in \mathcal{Z}_{n,\bfr_{j}\setminus\overline{\bfx},\overline{\bfx}}$ for some $\o\neq\overline{\bfx}\subset \bfx_{i}'\cap \bfr_{j}$ as in $v)$ pertaining to $\mathcal{K}'$. If $x_{2l_{0},j}\in\bfx_{i}'\cap\bfr_{j}$ and $x_{2l_{0}-1,j}\in \bfx_{k}'\cap\bfr_{j}$ for some $k<j$ with  $g+2\leq \lvert\bfr_{k}\rvert\leq 2g+3$ and $\bfx_{i},\bfx_{k}\notin \mathcal{K'}$ and some $2\leq l_{0}\leq g+2$, then $\bfx_{j}\in  \mathcal{Z'}_{n,\bfr_{j}\setminus\overline{\bfx},\overline{\bfx}}$, where the subset $\overline{\bfx}\subset \mathbb{N}$ satisfies $\o\neq\overline{\bfx}\subset \text{Set}(\bfx_{i}',\bfx_{k}')\cap\bfr_{j}$. The latter instance would correspond to the case $vi)$ of $\mathcal{K'}$. Otherwise, it transpires upon recalling the observation after (\ref{Tnr3n}) that either $x_{2l,j}\in\bfx_{i_{l}}'\cap\bfr_{j}$ or $x_{2l-1,j}\in\bfx_{i_{l}}'\cap\bfr_{j}$ for each $3\leq l\leq g+2$ and some $\bfx_{i_{l}}\in \mathcal{K'}$ and $i_{l}<j$, the corresponding tuple having the additional property that either $x_{4}\in \bfr_{j}$ or $\{x_{2l_{1}+1},x_{2l_{1}+2}\}\subset\bfr_{j}$ for some $2\leq l_{1}\leq g+1$, such an instance being described in $iii)$ of $\mathcal{K}_{1}'$. 
\end{proof}
Equipped with the above considerations we are prepared to prove the following statement.
\begin{prop}\label{prop3}
Whenever $\lambda=\lambda(g)$ is a sufficiently large constant one has with probability $1$ that for sufficiently large $n\in\mathbb{N}$ then
$$\lvert T_{n}(A)\rvert\ll \lambda^{2}\log n.$$ 
\end{prop}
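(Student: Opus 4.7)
The plan is to estimate the $m$-th moment $\mathbb{E}(\lvert T_n(A)\rvert^m)$ for $m=\lceil \log n\rceil$ and then deduce the proposition via Markov's inequality combined with the Borel--Cantelli lemma. Specifically, I aim to establish a bound of the shape
\begin{equation*}
\mathbb{E}(\lvert T_n(A)\rvert^m) \ll (C\lambda \log n)^m
\end{equation*}
for some constant $C=C(g)$, whereupon Markov then yields
\begin{equation*}
\mathbb{P}\big(\lvert T_n(A)\rvert > K\lambda^2\log n\big) \leq \big(C/(K\lambda)\big)^m,
\end{equation*}
an expression summable in $n$ once $\lambda=\lambda(g)$ is chosen large enough that $C/(K\lambda) < 1/e$ for a fixed $K$. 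An application of Lemma \ref{prop2} then delivers the desired almost-sure bound.

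To establish the moment bound I would expand the $m$-th moment as in (\ref{EET}) as a sum over $m$-tuples $(\bfx_1,\ldots,\bfx_m)\in T_n^m$, and classify each term according to the function $\varphi\in\Phi$ defined in (\ref{phi}) which records, for each $i$, the positions in $\bfx_i$ that coincide with entries of some $\bfx_j$ with $j<i$. This furnishes the decomposition (\ref{kala})--(\ref{list}) involving the auxiliary parameter $\Lambda\in\Lambda^\varphi$ specifying the labelings. For each fixed $(\varphi,\Lambda)$ I would inductively (from $i=1$ up to $i=m$) estimate the inner sum over $\bfx_i$ in (\ref{list}) by classifying it into one of the collections $\mathcal{K}$, $\mathcal{K}'$, $\mathcal{K}_1$, $\mathcal{K}_1'$; Lemma \ref{leem} certifies that these exhaust all possibilities.

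The key input estimates are then the following. For sums in $\mathcal{K}$ (fully independent tuples $\bfx_i\in T_n^{ind}$ disjoint from prior data), one obtains a factor bounded by $\mathbb{E}(\lvert T_n^{ind}(A)\rvert)\ll \lambda\log n$ via Lemma \ref{lem7}. For sums in $\mathcal{K}'$, i.e. the cases (i)--(vi), one extracts a power saving $O(n^{-\delta_g})$ by combining Lemmas \ref{lem7}, \ref{lem7.1} and \ref{lem7.2} with Propositions \ref{propo7} and \ref{cila} and Lemma \ref{cilas}, one application per instance according to which case is met. For sums in $\mathcal{K}_1$ the first three coordinates of $\bfx_i$ coincide with those of some earlier tuple in $\mathcal{K}$, so the conditional expectation is $O(1)$ by Lemma \ref{lem7.2}; analogously, $\mathcal{K}_1'$-sums are essentially determined by prior $\mathcal{K}'$-tuples and again contribute $O(1)$ after conditioning on the prior data.

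The main obstacle is to control the combinatorial explosion: the number of admissible $(\varphi,\Lambda)$ and choices of categorization for each inner sum is at most $(m(2g+4))^{O(m)}$, which is polynomial in $\log n$ to the $m$-th power. The point, however, is that any $\varphi$ distinct from the trivial configuration (all $\bfr_i=\emptyset$ and all $\bfx_i\in T_n^{ind}$) necessarily forces at least one inner sum to fall in $\mathcal{K}'$, whose power saving $n^{-\delta_g}$ easily absorbs those polynomial-in-$\log n$ combinatorial factors. The trickiest verification will therefore be to confirm that in every non-trivial configuration at least one sum truly lies in $\mathcal{K}'$ (rather than being disguised as a bounded $\mathcal{K}_1$ or $\mathcal{K}_1'$ contribution that never provides any saving); once that is secured, the dominant term is the trivial configuration of size $O\bigl((\lambda\log n)^m\bigr)$, summation over all configurations yields the claimed moment estimate, and an application of Markov and Borel--Cantelli concludes the proof.
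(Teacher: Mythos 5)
Your overall strategy coincides with the paper's: estimate $\mathbb{E}(\lvert T_{n}(A)\rvert^{m})$ for $m=\lceil\log n\rceil$ via the decomposition (\ref{kala})--(\ref{list}), sort the inner sums into $\mathcal{K},\mathcal{K}',\mathcal{K}_{1},\mathcal{K}_{1}'$, and finish with Markov and Borel--Cantelli. However, the step you dismiss as routine is precisely where your argument breaks down. The number of admissible configurations $(\varphi,\Lambda)$ is of order $m^{O(m)}=(\log n)^{O(\log n)}=n^{O(\log\log n)}$, which is \emph{super-polynomial} in $n$; a single power saving $n^{-\delta_{g}}$ (or any bounded number of such savings) cannot absorb it, and this is exactly the obstruction the paper flags in Section \ref{sec11} when it notes that the naive bound $\mathbb{E}(\lvert T_{n}(A)\rvert^{m})\ll m^{(2g+4)m}$ is useless. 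Moreover, your claimed dichotomy is false: a non-trivial $\varphi$ need not produce any sum in $\mathcal{K}'$ (for instance a tuple sharing exactly the triple $\{x_{1},x_{2},x_{3}\}$ with an earlier $\mathcal{K}$-tuple and otherwise new lies in $\mathcal{K}_{1}$, which yields only an $O(1)$ factor, not a power saving), so "at least one $\mathcal{K}'$-sum absorbs everything" is neither available nor sufficient.

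What the paper actually needs, and what your proposal is missing, is quantitative bookkeeping in which the savings scale with the number $K'$ of $\mathcal{K}'$-sums and the configuration count is bounded \emph{position by position}: at most $O(m)$ choices of $\bfr_{i}$ per $\mathcal{K}_{1}$-position, $O(mK'^{O(1)})$ or $O(K'^{2g+4})$ per $\mathcal{K}_{1}'$-position, and $O(m^{2g+4})$ per $\mathcal{K}'$-position, leading to (\ref{sanse}), i.e. $\sum_{\varphi}\lvert\Lambda^{\varphi}_{K,K',K_{1},K_{1}'}\rvert\ll C_{g}^{m}m^{m-K+(2g+4)K'}K'^{(3g+10)m}$. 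The factor $m^{m-K}$ pairs with the $(\lambda\log n)^{K}$ coming from the $\mathcal{K}$-sums to give $(\log n)^{m}$, which is then beaten by the $\lambda^{2m}(\log n)^{m}$ in Markov; the remaining $m^{(2g+4)K'}K'^{(3g+10)m}$ is absorbed by the aggregated saving $n^{-\delta_{g}'K'}$ through the elementary inequality $(3g+10)\log K'\leq\log c_{g}+\delta_{g,0}K'$. In addition, obtaining the saving $n^{-\delta_{g}}$ for each $\mathcal{K}'$-sum of types $iii)$ and $iv)$ is not a direct application of Propositions \ref{propo7} and \ref{cila}: those propositions bound the variables $X_{n,\bfr}(A)$ and $Y_{n,\bfr}(A)$, which encode \emph{pairs} of sums, and the paper must first rearrange (translate) the order of summation in (\ref{list}) and prove the structural inequality $S_{nc}\leq D_{g}(T_{2}+S_{c})$ so that the number of extracted savings is comparable with $K'$. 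Without this per-$K'$ accounting and the rearrangement argument, the moment bound $(C\lambda\log n)^{m}$ is not established by your outline.
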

\begin{proof}
We observe that as a consequence of Lemma \ref{leem} then \begin{equation}\label{kas}m=\lvert \mathcal{K}\rvert+\lvert\mathcal{K}_{1}\rvert+\lvert \mathcal{K'}\rvert+\lvert \mathcal{K}'_{1}\rvert.\end{equation} 
We further recall (\ref{phi}) and (\ref{YYY}) and denote for non-negative integers $K,K',K_{1},K_{1}'\leq m$ satisfying (\ref{kas}) and $\varphi\in\Phi$ by $\Lambda_{K,K',K_{1},K_{1}'}^{\varphi}$ to the subset of $\Lambda\in \Lambda^{\varphi}$ having the property that $K,K',K_{1}$ and $K_{1}'$ of the associated sums in (\ref{list}) are in $\mathcal{K},\mathcal{K}',\mathcal{K}_{1}$ and $\mathcal{K}_{1}'$ respectively. We then observe by recalling (\ref{RRRR}) that 
\begin{equation}\label{nene}\sum_{\varphi\in \Phi}  E_{\varphi}\ll \sum_{\substack{K,K',K_{1},K_{1}'\geq 0\\ K+K'+K_{1}+K_{1}'=m}}\sum_{\varphi\in\Phi}\ \ \ \sum_{\Lambda\in\Lambda_{K,K',K_{1},K_{1}'}^{\varphi}}E_{\varphi,\Lambda},\end{equation} and devote the upcoming lines to estimate \begin{equation*}\sum_{\varphi\in\Phi}\lvert \Lambda_{K,K',K_{1},K_{1}'}^{\varphi}\rvert.\end{equation*} We also denote by $\mathcal{S}_{q}$ for $1\leq q\leq 12$ to each of the $12$ collection of tuples described after (\ref{list}) and introduce $$\Upsilon_{m}=\{\upsilon: [1,m]\rightarrow [1,12]\ \  \}.$$ We then write for $\upsilon\in\Upsilon_{m}$ and $\varphi\in\Phi$ as above $\Lambda_{K,K',K_{1},K_{1}'}^{\varphi,\upsilon}$ to denote the subset of $\Lambda\in\Lambda_{K,K',K_{1},K_{1}'}^{\varphi}$ having the property that for each $i\leq m$ then the sum running over $\bfx_{i}$ in the expression (\ref{list}) associated to $\Lambda$ belongs to the collection $\mathcal{S}_{\upsilon(i)}.$ The preceding observation in conjunction with the fact that $\lvert \Lambda_{K,K',K_{1},K_{1}'}^{\varphi}\rvert=O(H_{g}^{m})$ for some constant $H_{g}>0$ and each $\varphi$ enables one to obtain \begin{equation}\label{HHH}\sum_{\varphi\in\Phi}\lvert \Lambda_{K,K',K_{1},K_{1}'}^{\varphi}\rvert\ll H_{g}^{m}\sum_{\substack{\varphi\in\Phi\\ \Lambda_{K,K',K_{1},K_{1}'}^{\varphi}\neq \o}}1\ll H_{g}^{m}\sum_{\upsilon\in\Upsilon_{m}}\sum_{\substack{\varphi\in\Phi\\ \Lambda_{K,K',K_{1},K_{1}'}^{\varphi,\upsilon}\neq \o}}1.\end{equation}
It then transpires after recalling the above definitions that in order to count for fixed $\upsilon$ the number of $\varphi$ satisfying the underlying condition in the sum on the right side it would suffice to count for each $1\leq q\leq 12$  and each $i_{q}\leq m$ such that $\upsilon(i_{q})=q$ the number of possible $\bfr_{i_{q}}$ as in (\ref{alat}) once the preceding tuples $\bfx_{1},\ldots,\bfx_{i_{q}-1}$ are fixed.

Indeed, whenever in the position $1<i\leq m$ the sum is running over tuples $\bfx_{i}$ as described in $i)$ of $\mathcal{K}_{1}$ then one would have \begin{equation*}\bfr_{i}=\{x_{1,j_{0}},x_{2,j_{0}},x_{3,j_{0}}\}\end{equation*} for some $1\leq j_{0}<i$, whence once the tuples $\bfx_{1},\ldots, \bfx_{i-1}$ are fixed, there are at most $i$ possibilities for $\bfr_{i}$. Likewise, for sums running over tuples $\bfx_{i}$ as described in $ii)$ of $\mathcal{K}_{1}$ then $\bfr_{i}$ is completely determined by some $1\leq j<i$, whence an analogous argument would yield the same conclusion, it entailing that there are at most $O(m^{K_{1}})$ possibilities for the sequence of sets $(\bfr_{i})_{i}$ corresponding to sums running over $\mathcal{K}_{1}$.

We next observe upon denoting by $\mathcal{K}_{2}$ to the collection of sums satisfying $i)$ in the description of $\mathcal{K}_{1}'$ and $K_{2}=\lvert \mathcal{K}_{2}\rvert$ that there are at most $O(K'^{2g+4})$ possibilities for each $\bfr_{i}$ with fixed $i\leq m$, whence the number of possibilities for the sequence of sets $(\bfr_{i})_{i}$ corresponding to sums in $\mathcal{K}_{2}$ would then be $O(C_{g}^{K_{2}}K'^{(2g+4)K_{2}})$ for some constant $C_{g}>0$. It also transpires when denoting $K_{3}$ to the number of sums satisfying $ii)$ in the description of $\mathcal{K'}_{1}$ that there are at most $O(mK'^{3})$ possible choices for each $\bfr_{i}$ and fixed $i$, whence the total number of such $\bfr_{i}$ would be bounded above by
$$C_{g}^{m}m^{K_{3}}K'^{3K_{3}}\ll C_{g}^{m}m^{K_{3}}K'^{3m}.$$

We shift our attention to the tuples $iii)$ in the description of $\mathcal{K}_{1}'$, denote by $K_{4}$ to the analogous cardinality and observe in view of the fact that tuples lie on $T_{n}$ that once the underlying tuples $\bfx_{i_{l}}$ and $\bfx_{i_{1}}$ are fixed then there are $O_{g}(1)$ choices for the tuple $\bfx_{i}$. Therefore, the number of $\bfr_{i}$ for fixed $i$ is bounded above by $O_{g}(mK'^{g+3})$, and hence the number of possibilities for the corresponding collection of $\bfr_{i}$ would be bounded above by
$$C_{g}^{K_{4}}m^{K_{4}}K'^{(g+3)K_{4}}\ll C_{g}^{m}m^{K_{4}}K'^{(g+3)m}.$$
We also note that for each sum running over tuples $\bfx_{i}$ in $\mathcal{K}'$ then once the tuples $\bfx_{j}$ with $j<i$ are fixed there are at most $\big((2g+4)i\big)^{2g+4}$ possibilities for $\bfr_{i}$. The above discussion reveals that there are $O\big((C_{g}m)^{(2g+4)K'}\big)$ possibilities for the sets $\bfr_{i}$ corresponding to sums in $\mathcal{K'}$. We then deduce upon noting $K+K_{1}+K_{3}+K_{4}\leq m$
that the preceding discussion in conjunction with (\ref{HHH}) and the fact that $\lvert \Upsilon_{m}\rvert=12^{m}$ delivers the estimate
\begin{align}\label{sanse}\sum_{\varphi\in\Phi}\lvert \Lambda_{K,K',K_{1},K_{1}'}^{\varphi}\rvert &\ll   C_{g}^{m}m^{K_{1}+K_{3}+K_{4}+(2g+4)K'}K'^{(3g+10)m}\nonumber
\\
& \ll C_{g}^{m}m^{m-K+(2g+4)K'}K'^{(3g+10)m}.\end{align}

We shift our attention to (\ref{list}) and (\ref{nene}) and note that in order to estimate $E_{\varphi,\Lambda}$ for fixed $(\varphi,\Lambda)$ it transpires in view of Propositions \ref{propo7} and \ref{cila} that moving a positive proportion of the sums over $\bfx_{i}$ to the position $j-1$ for pairs $(\bfx_{i},\bfx_{j})$ with $i+1<j$ described in $iii)$ and $iv)$ pertaining to $\mathcal{K}'$  is a desideratum. We first assume that the pair $(\bfx_{i},\bfx_{j})$ with $i+1<j$ is as in $iv)$, observe that for fixed $j$ there are at most $2g+4$ indexes $i_{1}<j$ for which $(\bfx_{i_{1}},\bfx_{j})$ satisfies the above and take the largest of such indexes. It then seems apparent in view of the definitions $ii)$ and $iv)$ of $\mathcal{K}'$ that if $\bfx_{i}'\cap\text{Set}(\bfx_{i+1})\neq \o$ then either $\bfx_{i+1}\in T_{n,\bfr_{i+1}}^{l}$ for $l\in\{0,1,4\}$ or \begin{equation}\label{kkka}\{x_{4,i},\ldots,x_{2g+4,i}\}=\{x_{4,i+1},\ldots,x_{2g+4,i+1}\}\end{equation} with $\bfx_{i+1}\in T_{n,\bfr_{i+1}}^{5}$ and $\text{Set}(\bfx_{i+1})=\bfr_{i+1}$. By iterating the above argument it transpires that either one moves the sum over $\bfx_{i}$ to the position $j-1$ or to the position $k-1$ for some $i< k<j$ with $\bfx_{k}\in T_{n,\bfr_{k}}^{l}$, the parameter $l$ as above and $\bfr_{k}\cap \bfx_{i}'\neq\o.$ We note that for a fixed position $k$ as above there are at most $2g+4$ positions $i$ satisfying such a property.

We next focus on the instance $(\bfx_{i},\bfx_{j})$ with $i+1<j$ as in $iii)$ of $\mathcal{K}'$, note that for fixed $j$ there are at most $2g+4$ indexes $i_{1}<j$ for which $(\bfx_{i_{1}},\bfx_{j})$ satisfies the above and take the largest of such indexes. If $\text{Set}(\bfx_{i})\cap \text{Set}(\bfx_{i+1})\neq \o$ then either $\bfx_{i+1}\in T_{n,\bfr_{i+1}}^{l}$ for $l\in\{0,1,4\}$, one has $\text{Set}(\bfx_{i})=\text{Set}(\bfx_{i+1})$ or $\bfr_{i+1}=\{x_{1,i},x_{2,i},x_{3,i}\}$ and the tuple $(\bfx_{i},\bfx_{i+1})$ is as in $i)$ cognate to $\mathcal{K}_{1}$. If $\bfr_{i+1}=\{x_{1,i},x_{2,i},x_{3,i}\}$ and either $\bfx_{i+1}'\cap \text{Set}(\bfx_{l})=\o$ or $\text{Set}(\bfx_{l})=\bfr_{l}$ as in (\ref{kkka}) for all $i+1<l\leq j$ then one may move the sum over $\bfx_{i+1}$ to the position $j$ and iterate the process. If not the translation of the sum over $\bfx_{i+1}$ as described above would transport it to a position $p< m$ for which either $(\bfx_{i+1},\bfx_{p+1})$ is as in $iv)$ after (\ref{sole}) or $\bfx_{p+1}\in T_{n,\bfr_{p+1}}^{l}$ with $\bfx_{i+1}'\cap\bfr_{p+1}\neq\o$. If on the contrary $\text{Set}(\bfx_{i})\cap \text{Set}(\bfx_{i+1})= \o$ then one may flip the sums and iterate the above process. It would follow that either one would transport the sum over $\bfx_{i}$ to the position $j-1$ or to the position $k-1$ for some $i< k<j$ with $\bfx_{k}\in T_{n,\bfr_{k}}^{l}$, the number $l$ as above and $\bfr_{k}\cap \bfx_{i}'\neq\o,$ or one would transport for some $i<h<k_{1}\leq j$ the sum over tuples $\bfx_{h}$ with $\bfr_{h}=\{x_{1,i},x_{2,i},x_{3,i}\}$ to the position $k_{1}-1$ for some $i< k_{1}\leq j$ with either $(\bfx_{h},\bfx_{k_{1}})$ satisfying $iv)$ after (\ref{sole}) or $\bfx_{k_{1}}\in T_{n,\bfr_{k_{1}}}^{l}$, the number $l$ as above and $\bfr_{k_{1}}\cap \bfx_{h}'\neq\o.$ It then seems worth observing that for each such position $k_{1}$ there are at most $2g+4$ positions $h$ satisfying the above conditions, the position $i$ being uniquely determined by $h$. After changing the orders of summation appropiately in (\ref{alat}) as described above and denoting by $S_{c}$ and $S_{nc}$ to the number of consecutive and non-consecutive pairs of sums respectively running over tuples described in $iii)$ and $iv)$ in $\mathcal{K}'$, and $T_{2}$ to the number of sums running over tuples described in $ii)$ of $\mathcal{K'}$, it will transpire by the preceding discussion that there exists a constant $D_{g}>0$ such that 
\begin{equation}\label{alfs}S_{nc}\leq D_{g}(T_{2}+S_{c}).\end{equation}

 Before making further progress in the proof it seems worth noting that an application of Lemmata \ref{lem7.1} and \ref{lem7.2} reveals that whenever $\bfr_{j}\neq \o$ then \begin{equation}\label{fran}\sum_{\substack{\bfx_{j}\in T_{n}\\ \text{Set}(\bfx_{j})=\bfr_{j}\cup \bfx_{j}'\\ \bfr_{j}\cap \bfx_{j}'=\o}}\mathbb{P}(\bfx_{j}'\in A)\ll 1.\end{equation}
We resume our analysis by assuming that $(\bfx_{i},\bfx_{j})$ for $i+1<j$ is as in $v)$ a few paragraphs after (\ref{sole}) and justify why it is not necessary in this instance to translate the sums as was previously done. We take for fixed $j$ the largest of such $i$ as above and observe whenever $g+2\leq \lvert \bfr_{i}\rvert\leq 2g+3$ that
\begin{align*}\sum_{\substack{\bfx_{i}\in T_{n,\bfr_{i}}^{5}\\ \text{Set}(\bfx_{i})=\bfr_{i}\cup \bfx_{i}'\\ \bfr_{i}\cap \bfx_{i}'=\o}}&\mathbb{P}(\bfx_{i}'\in A) \cdots\sum_{\o\neq\overline{\bfx}\subset \bfx_{i}'\cap \bfr_{j}}\sum_{\substack{\bfx_{j}\in \mathcal{Z}_{n,\bfr_{j}\setminus \overline{\bfx},\overline{\bfx}}\\ \text{Set}(\bfx_{j})=\bfr_{j}\cup \bfx_{j}'\\ \bfr_{j}\cap \bfx_{j}'=\o}}\mathbb{P}(\bfx_{j}'\in A)
\\
\ll&\sum_{\substack{\bfx_{i}\in \mathcal{W}_{1,\bfr_{i}}\\ \text{Set}(\bfx_{i})=\bfr_{i}\cup \bfx_{i}'\\ \bfr_{i}\cap \bfx_{i}'=\o}}\mathbb{P}(\bfx_{i}'\subset A)
\cdots\sum_{\substack{\bfx_{j}\in T_{n,\bfr_{j}}^{5}\\ \text{Set}(\bfx_{j})=\bfr_{j}\cup \bfx_{j}'\\ \bfr_{j}\cap \bfx_{j}'=\o}}\mathbb{P}(\bfx_{j}'\in A)
\\
&+\sum_{\substack{\bfx_{i}\in \mathcal{W}_{2,\bfr_{i}}\\ \text{Set}(\bfx_{i})=\bfr_{i}\cup \bfx_{i}'\\ \bfr_{i}\cap \bfx_{i}'=\o}}\mathbb{P}(\bfx_{i}'\subset A)\cdots \sum_{\o\neq \overline{\bfx}\subset \bfx'_{i}\cap \bfr_{j}}\mathbb{E}\big( \lvert \mathcal{Z}_{n,\bfr_{j}\setminus \overline{\bfx},\overline{\bfx}}(A)\rvert \big|\bfr_{j},\overline{\bfx}\subset A\big),
\end{align*}
the sets $\mathcal{W}_{1,\bfr_{i}},\mathcal{W}_{2,\bfr_{i}}$ stemming from Lemma \ref{cilas}, and the dots expressing that there may possibly be other sums in between. Then, the application of such a lemma thereby entails that the inner sum in the second summand of the above equation is $O(n^{-\delta_{g}})$. For the first one, one would instead deduce by means of (\ref{fran}) that the innermost sum is $O(1)$ and would use Lemma \ref{cilas} again to derive that the first sum over $\bfx_{i}\in \mathcal{W}_{1,\bfr_{i}}$ would then be $O(n^{-\delta_{g}})$, as desired. If $(\bfx_{i},\bfx_{j},\bfx_{k})$ for $i<j<k$ is as in $vi)$ then for fixed $k$ there are at most $O_{g}(1)$ pairs $(i,j)$ satisfying the above, whence by taking the largest one in the lexicographical order, Lemma \ref{cilas} permits one to derive as above that either the inner sum over $\bfx_{j}$ or its counterpart over $\bfx_{i}$ is $O(n^{-\delta_{g}})$, the corresponding one over $\bfx_{k}$ being $O(1)$ after an application of (\ref{fran}).

Then, upon denoting by $T_{5}$ to the number of sums running over tuples in $i)$, $v)$ and $vi)$ of $\mathcal{K}'$, we observe that by the preceding discussion and (\ref{alfs}) there exists a constant $D_{g}'>0$ such that 
\begin{equation*} K'\leq  D_{g}'(T_{2}+S_{c}+T_{5}),\end{equation*}whence it transpires that
\begin{equation*}n^{-\delta_{g}(T_{2}+S_{c}+T_{5})}\ll n^{-\delta_{g}'K'}.\end{equation*}
We thereby deduce for given $\varphi\in\Phi$ and $\Lambda\in\Lambda_{K,K',K_{1},K_{1}'}^{\varphi}$ via the preceding discussion and Lemmata \ref{lem7}, \ref{lem7.1} and \ref{lem7.2} to bound the remaining sums in (\ref{list}) that correspond to the cases $i)$ and $ii)$ of $\mathcal{K'}$, Lemma \ref{lem7.2} to estimate the sums in $\mathcal{K}_{1}$ and $\mathcal{K}_{1}'$ and Lemma \ref{lem7} to estimate the sums in $\mathcal{K}$ that
$$E_{\varphi,\Lambda}\ll \Delta_{g}^{m}n^{-\delta_{g}(T_{2}+S_{c}+T_{5})}(\lambda \log n)^{K}\ll \Delta_{g}^{m}n^{-\delta_{g}'K'}(\lambda \log n)^{K}$$ with $\Delta_{g},\delta_{g}'>0$ fixed. Such a bound in conjunction with (\ref{kala}), (\ref{nene}) and (\ref{sanse}) enables one to derive
\begin{align}\label{RRR}\mathbb{E}\big(\lvert T_{n}(A)\rvert^{m}\big)&\ll C_{g}^{m}\lambda^{K}(\log n)^{m+4+(2g+4)K'}K'^{(3g+10)m}n^{-\delta_{g}'K'}\nonumber
\\
&\ll C_{g}^{m}\lambda^{K}(\log n)^{m+4}K'^{(3g+10)m}n^{-\delta_{g,0}K'}\end{align} for some fixed $C_{g},\delta_{g,0}>0$. It seems pertinent to note that $$K'^{(3g+10)m}n^{-\delta_{g,0}K'}\ll c_{g}^{m},$$ where $c_{g}>0$ is a constant, since indeed for some $c_{g}=C_{g}(C)>0$ sufficiently large one has $$(3g+10)\log K'\leq \log c_{g}+\delta_{0,g}K'.$$  

We then observe that an application of Markov's inequality yields
\begin{equation*}\mathbb{P}\big(\lvert T_{n}(A)\rvert>\lambda^{2}\log n\big)=\mathbb{P}\big(\lvert T_{n}(A)\rvert^{m}>\lambda^{2m}(\log n)^{m}\big)\leq \frac{\mathbb{E}\big(\lvert T_{n}(A)\rvert^{m}\big)}{\lambda^{2m}(\log n)^{m}},\end{equation*}
and combine the preceding estimates with (\ref{RRR}) to deduce that
\begin{equation*}\label{mark}\mathbb{P}\big(\lvert T_{n}(A)\rvert>\lambda^{2}\log n\big)\ll C_{g}^{m}\lambda^{K-2m}(\log n)^{4}\ll (C_{g}\lambda^{-1})^{m}(\log n)^{4}\ll n^{-2}\end{equation*} for sufficiently large $\lambda$. An application of Lemma \ref{prop2} (Borel-Cantelli) then delivers the desired result.
\end{proof}
 
\section{A lower bound for the counting function}\label{sec8}
We shall devote the remainder of the memoir to employ the preceding sequel of propositions for the purpose of concluding the proof of Theorem \ref{thm1.1}. To such an end we apologise for the abuse of notation and define for $N\in\mathbb{N}$ the random variable \begin{equation}\label{cont}A(N)=\sum_{n\leq N}\leavevmode\hbox{$1\!\rm I$}_{A}(n).\end{equation} In view of the considerations after (\ref{probas}) it is apparent that the above expression is a sum of independent random variables taking values $0$ and $1$ and satisfying $$\mathbb{E}(A(N))\asymp\lambda^{-1}\sum_{n\leq N}n^{-\alpha}\asymp \lambda^{-1}N^{1-\alpha}.$$ One then may apply Chernoff's inequality \cite[Corollary A-1.14]{Alon} to deduce for every $N\in\mathbb{N}$ and fixed $\delta>0$ the existence of a constant $c_{\delta}>0$ such that 
$$\mathbb{P}\big(\lvert A(N)-\mathbb{E}(A(N))\rvert\geq \delta\mathbb{E}(A(N))\big)\leq e^{-c_{\delta}N^{1-\alpha}}.$$
Therefore, an application of Lemma \ref{prop2} (Borel-Cantelli) enables one to deduce with probability $1$ that \begin{equation}\label{Am}A(N)\asymp \lambda^{-1} N^{1-\alpha}.\end{equation}

\emph{Proof of Theorem \ref{thm1.1}.} We apply Lemma \ref{lem1.4} and Proposition \ref{prop3} to derive that \begin{equation}\label{Raaa}R_{n}(A)\gg \lambda^{2g+5}\log n,\ \ \ \ \ \ \ \ \ \ \ \ \ \ \ \ \ \ \ \ \ \ \ \ \lvert T_{n}(A)\rvert\ll \lambda^{2} \log n\end{equation} with probability $1$. We take any such $A$ satisfying the property (\ref{Raaa}) and draw the reader's attention to equation (\ref{AAA}) to the end of observing upon recalling the definition of $A'$ right after (\ref{AAAAA}) and taking a sufficiently large but fixed constant $\lambda=\lambda(g)$ that there exists a $B_{2}[g]$ sequence $A'$ having the property that \begin{equation}\label{lower}R_{n}(A')\gg \lambda^{2g+5}\log n.\end{equation} 
It seems worth noting after a succint inspection of such a definition and upon considering for $N\in\mathbb{N}$ the counting function $A'(N)=\lvert A'\cap [1,N]\rvert$ that \begin{equation}\label{ine}A'(N)\leq A(N).\end{equation} 

The reader may observe that the same notation for the sake of concission has been utilised in (\ref{cont}) to denote the corresponding associated random variable. We thus sum over $n$ the equation (\ref{lower}) and obtain
$$N\log N\ll_{\lambda} \sum_{N/2\leq n\leq N}R_{n}(A')\ll  A'(N)^{2} A'(g_{\lambda}(N)).$$ We further remark that the bounds (\ref{Am}) and (\ref{ine}) permit one to deduce $$A'(g_{\lambda}(N))\leq A(g_{\lambda}(N))\ll_{\lambda}N^{\varepsilon(1-\alpha)}\log N ,$$ whence the preceding lines enable one to derive 
$$N\log N\ll_{\lambda}    A'(N)^{2} 
N^{\varepsilon(1-\alpha)}\log N.$$ Consequently, it transpires by simplifying accordingly the above line that
$$N^{1-\alpha}\ll_{\lambda} A'(N),$$ wherein the reader may find it useful to note upon recalling (\ref{alpha}) that $$1-\varepsilon(1-\alpha)=\frac{2g}{2g+1}=2(1-\alpha).$$
By the preceding discussion we conclude that the sequence $A'$ is a $B_{2}[g]$ sequence having the property (\ref{refi}) and such that its counting function satisfies the previous lower bound.


\begin{thebibliography}{99}

\bibitem{Alon} N. Alon, J. Spencer, \emph{The Probabilistic Method}, Second edition, Wiley, New York, 2000.
\bibitem{Bor} P. Borwein, S. Choi, F. Chu, \emph{An old conjecture of Erd\H{o}s and Tur\'an on additive bases}, Math. Comput. 75 (2005), 475--484.
\bibitem{Bou} J. Bourgain, P. Sarnak, Z. Rudnick, \emph{Local statistics of lattice points on the sphere. Modern trends in constructive function theory}, 269--282, Contemp. Math., 661, Amer. Math. Soc., Providence, RI, 2016.
\bibitem{Cille2} J. Cilleruelo, \emph{Probabilistic Constructions of $B_{2}[g]$ Sequences}, Acta Math. Sin. 26 (2010), No. 7, 1309--1314.
\bibitem{Cille4} J. Cilleruelo, \emph{Infinite Sidon sequences}, Adv. Math. 255 (2014), 474--486.
\bibitem{Cille3} J. Cilleruelo, \emph{On Sidon sets and asymptotic bases}, Proc. Lond. Math. Soc. 111 (2015), No. 5, 1206--1230.
\bibitem{des} J.-M. Deshouillers, A. Plagne, \emph{A Sidon basis}, Acta Math. Hungar. 123 (2009), 233--238.
\bibitem{Dirac} G. Dirac, \emph{Note on a Problem in Additive Number Theory}, J. Lond. Math. Soc. 26 (1951), 312--313.
\bibitem{Erd5} P. Erd\H{o}s, \emph{On a problem of Sidon in additive number theory}, Acta Sci. Math. (Szeged) 15 (1954), 255--259.
\bibitem{erd3} P. Erd\H{o}s, \emph{The probability method: Succeses and limitations}, Journal of Statistical Planning and Inference 72 (1998), 207--213.
\bibitem{ErdFu} P. Erd\H{o}s, W. Fuchs, \emph{On a problem of additive number theory}, J. Lond. Math. Soc. 31 (1956), 67--73.
\bibitem{Erdos} P. Erd\H{o}s, A. Renyi, \emph{Additive properties of random sequences of positive integers}, Acta Arith. 6 (1960).

\bibitem{Erd2} P. Erd\H{o}s, A. S\'ark\"ozy, V. T. S\'os, \emph{On additive properties of general sequences}, Discrete Mathematics 136 (1994), 75--99.

\bibitem{Erd4} P. Erd\H{o}s, A. S\'ark\"ozy, V. T. S\'os, \emph{On Sum Sets of Sidon Sets, I}, J. Number Theory 47 (1994), 329--347.

\bibitem{ErTe} P. Erd\H{o}s, P. Tetali, \emph{Representations of integers as the sum of $k$ terms}, Random Structures Algorithms 1 (1990), No. 3, 245--261.

\bibitem{Erd} P. Erd\H{o}s, P. Tur\'an, \emph{On a problem of Sidon in additive number theory, and on some related problems}, J. Lond. Math. Soc. 16 (1941), 212--215.
\bibitem{Gre} G. Grekos, L. Haddad, C. Helou, J. Pihko, \emph{On the Erd\H{o}s-Tur\'an conjecture}, J. Number Theory 102 (2003), 339--352.


\bibitem{Hal} H. Halberstam, K. F. Roth, \emph{Sequences}, Springer-Verlag, New York, 1983.
\bibitem{Janson1} S. Janson, \emph{Poisson aproximation for large desviations}, Random Structures and Algorithms 1, (1990).
\bibitem{Kim} J. H. Kim, V. H. Vu, \emph{Concentration of multivariate polynomials and its applications}, Combinatorica 20 (2000), 417--434.
\bibitem{Kis} S. Kiss, \emph{On Sidon sets which are asymptotic basis}, Acta Math. Hungar. 128 (2010), 46--58.
\bibitem{Kon} I. Konstantoulas, \emph{Lower bounds for a conjecture of Erd\H{o}s and Tur\'an}, Acta Arith. 159 (2013), No. 4, 301--313.
\bibitem{Lin} Y. V. Linnik, \emph{Ergodic properties of algebraic fields}, translated from the Russian by M. S. Keane, Ergebnisse der Mathematik und ihrer Grenzgebiete, Band 45, Springer-Verlag
New York, New York 1968.
\bibitem{Pil} C. Pilatte, \emph{A solution to the Erd\H{o}s-S\'ark\"ozy-S\'os problem on asymptotic Sidon bases of order 3}, preprint (2023), arXiv:2303.09659.
\bibitem{Ru} I. Ruzsa, \emph{A just basis}, Monatsh. Math. 109 (1990), 145--151.
\bibitem{Ru2} I. Ruzsa, \emph{A converse to a theorem of Erd\H{o}s}, J. Number Theory 62
(1997), 397--402.
\bibitem{Ruz} I. Ruzsa, \emph{An infinite Sidon sequence}, J. Number Theory 68 (1998), 63--71.
\bibitem{Vu2} V. H. Vu, \emph{On the concentration of multivariate polynomials with small expectation}, Random Structures and Algorithms 16 (2000), 344--363.
\bibitem{Vu} V. H. Vu, \emph{On a refinement of Waring's problem}, Duke Math. J. 105 (2000), 107--134.
\bibitem{Woo} T. D. Wooley, \emph{On Linnik's conjecture: sums of squares and microsquares}, Internat. Math. Res. Notices (2014), No. 20, 5713--5736.
\end{thebibliography}
\end{document}